\def\hat{\widehat}
\def\tilde{\widetilde}
\newcommand{\Hil}{{\mathcal H}}
\newcommand{\Ave}{{\mathcal L}}
\newcommand{\DOm}{\partial\Omega}
\newcommand{\dd}{{\partial}}
\def\HT{{\mathcal H}}
\def\cal{\mathcal}
\def\Om{\Omega}
\def\O{{\mathcal O}}
\def\re{\hbox{Re}\,}
\def\im{\hbox{Im}\,}
\def\Re{\hbox{Re}\,}         
\def\Im{\hbox{Im}\,}
\def\l{\sigma}
\def\p{\partial}
\def\bar{\overline}
\def\H{{\cal H}}
\def \beq {\begin {eqnarray}}
\def \eeq {\end {eqnarray}}
\def \ba {\begin {eqnarray*}}
\def \ea {\end {eqnarray*}}
\DeclareMathOperator{\dbar}{\bar{\partial}}
\DeclareMathOperator{\de}{\partial}
\DeclareMathOperator{\dez}{\de_z}
\DeclareMathOperator{\dezz}{\de_\zeta}
\DeclareMathOperator{\dbarz}{\dbar_z}
\DeclareMathOperator{\dbarzz}{\dbar_\zeta}
\DeclareMathOperator{\dbark}{\dbar_k}
\DeclareMathOperator{\R}{\mathbb{R}}
\DeclareMathOperator{\C}{\mathbb{C}}
\DeclareMathOperator{\D}{\mathbb{D}}
\DeclareMathOperator{\Z}{\mathbb{Z}}
\DeclareMathOperator{\by}{\times}
\DeclareMathOperator{\bndry}{\partial\Omega}
\newtheorem{theorem}{Theorem}[section]
\newtheorem{lemma}[theorem]{Lemma}
\numberwithin{equation}{section}
\title{A Direct Reconstruction Method for \\Anisotropic Electrical Impedance Tomography}
\author{S.J. Hamilton, M. Lassas, and S. Siltanen}
\affil{Department of Mathematics \& Statistics, University of Helsinki}
\date{}
\begin{document}

\maketitle
\begin{abstract}
\noindent
A novel computational, non-iterative and noise-robust reconstruction method is introduced for the planar anisotropic inverse conductivity problem. The method is based on bypassing the unstable step of the reconstruction of the values of the isothermal coordinates on the boundary of the domain.  Non-uniqueness of the inverse problem is dealt with by recovering the unique isotropic conductivity that can be achieved as a deformation of the measured anisotropic conductivity by \emph{isothermal coordinates}.  The method shows how isotropic D-bar reconstruction methods have produced reasonable and informative reconstructions even when used on EIT data known to come from anisotropic media, and when the boundary shape is not known precisely. Furthermore, the results pave the way for regularized anisotropic EIT. Key aspects of the approach involve D-bar methods and inverse scattering theory, complex geometrical optics solutions, and quasi-conformal mapping techniques.   
\end{abstract}

\section{Introduction} 
A novel computational, non-iterative and noise-robust reconstruction method is introduced for the planar anisotropic inverse conductivity problem. The method is an extension of the so-called D-bar reconstruction methods used in isotropic EIT imaging, and is based on bypassing the unstable steps used in earlier anisotropic methods which involve derivatives of a highly unstable map.  Numerical reconstructions from simulated anisotropic Electrical Impedance Tomography data demonstrate that noise-robust images of isotropic, or scalar-valued, conductivities that are distorted versions of the original anisotropic conductivities can be reliably recovered.

\begin{figure}[b!]
\centering
\begin{picture}(280,140)
\put(0,7){\includegraphics[height=130pt]{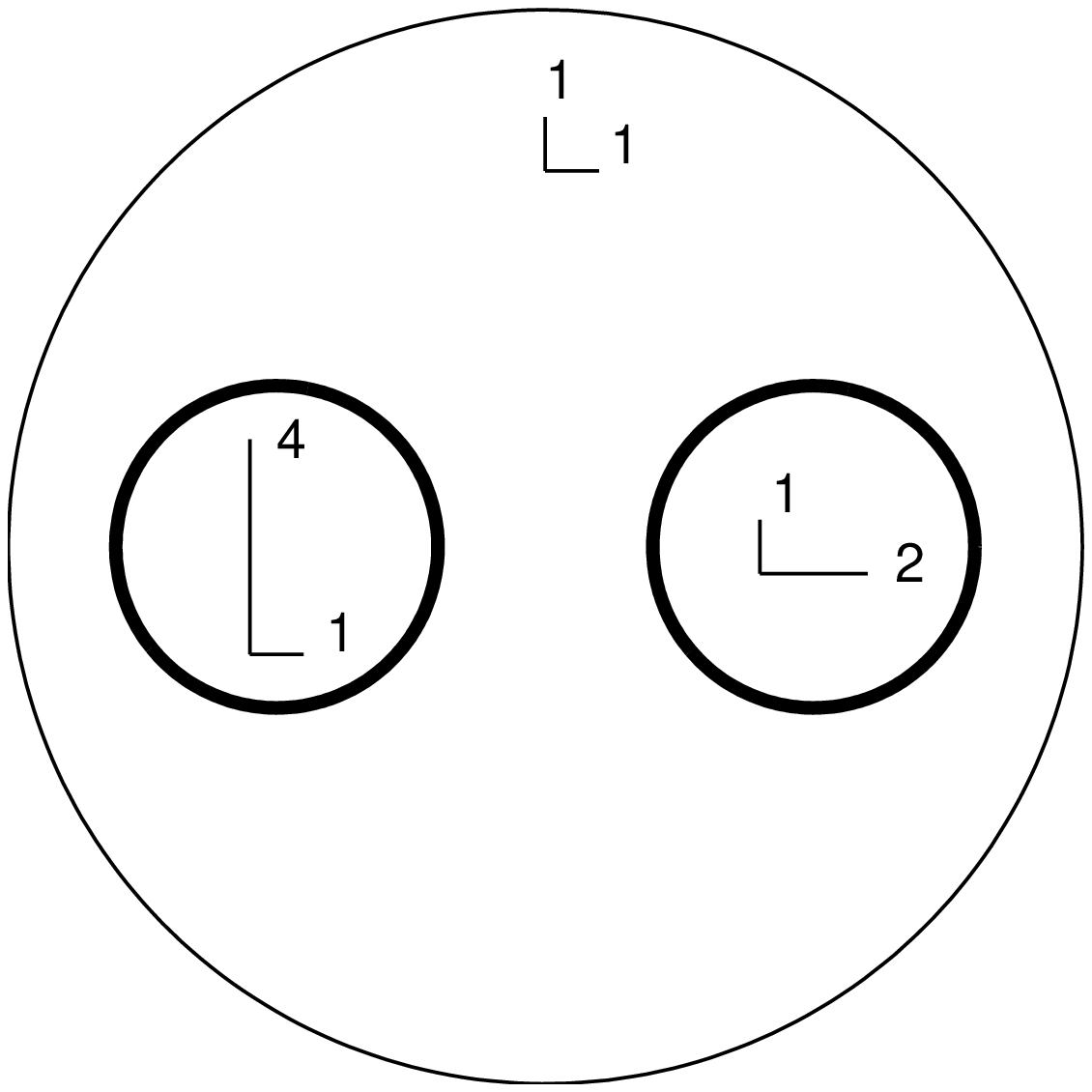}}
\put(150,2){\includegraphics[height=140pt]{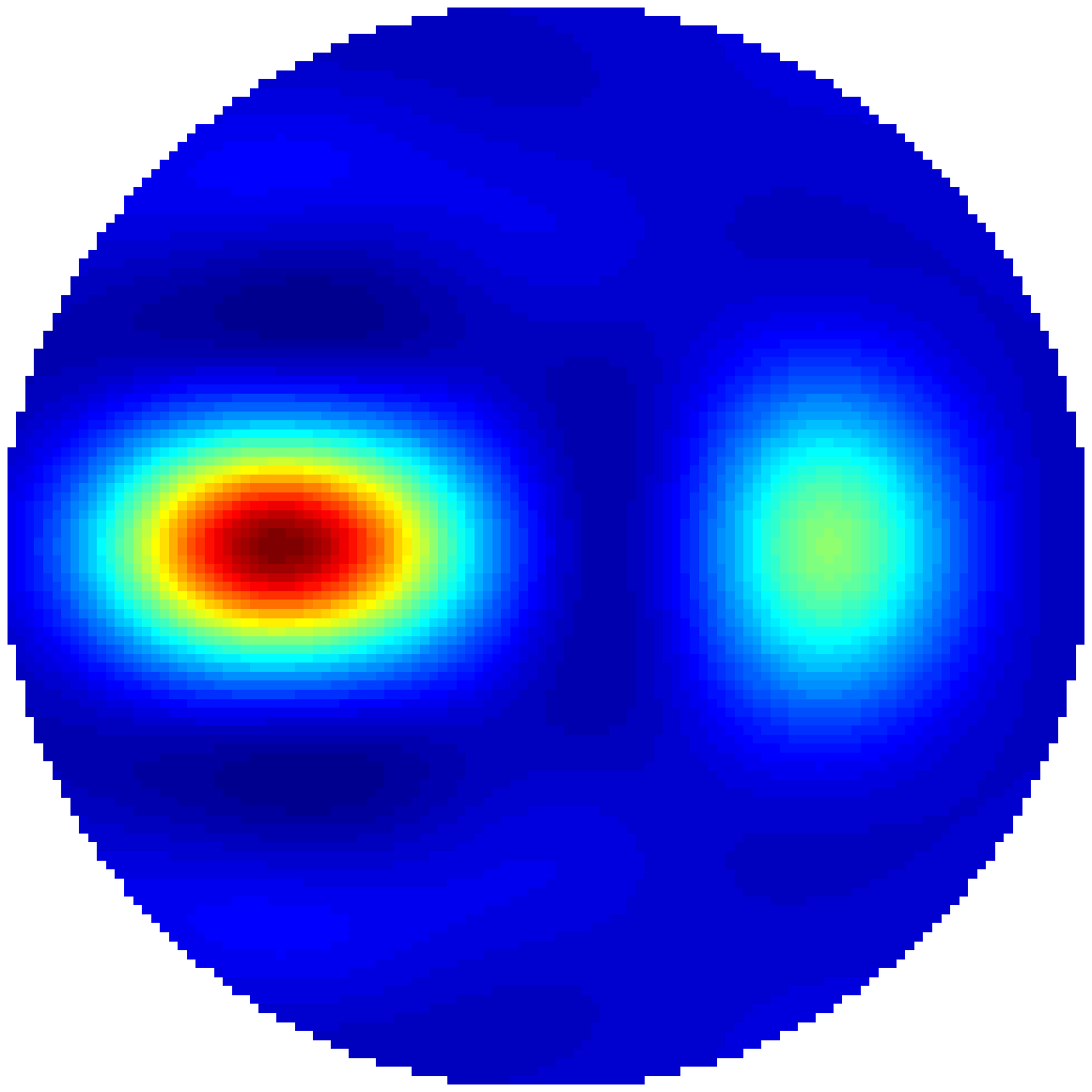}}
\end{picture}
\caption{\label{fig-HammerHead} Left: Anisotropic conductivity $\sigma$ with two circular inclusions in an isotropic background. The strength and direction of the anisotropic preferences are indicated schematically.   Right: Reconstructed conductivity, which is isotropic and appropriately deformed.  The anisotropic preferences squeeze the circular inclusions into ellipses.}
\end{figure}
Electrical Impedance Tomography (EIT) is a portable, inexpensive, non-invasive imaging modality which recovers the internal conductivity of a body using harmless electrical measurements taken at its surface. The reconstruction task is a highly ill-posed mathematical inverse problem whose goal is to produce images that can be used by a physician or engineer for diagnostic and evaluative purposes.  Promising applications of EIT include monitoring heart and lung function in hospitalized patients \cite{Mueller2001,Isaacson2006,Costa2009,Frerichs2000,Holder2005}, detection and classification of breast tumors \cite{Zou2003,Kim2007,Kao2007,Kao2008}, and geophysical prospecting \cite{Abubakar2009}.

The vast majority of reconstruction techniques assume that the medium is isotropic, meaning that the electrical current will flow equally well in all directions. However, many bodies that are imaged  are anisotropic: there is a spatially dependent preferred direction for current flow. For example, human heart tissue is three times more conductive along the muscle fibers than in the lateral directions \cite{Barber1990}. There are relatively few reconstruction approaches designed for anisotropic cases, including \cite{Glidwell1995,Glidwell1997,Abascal2003,Abascal2007,Breckon1992,Lionheart2010}.

Another source of anisotropy is poorly known boundary shape: even if the conductivity is isotropic, incorrect modeling of the domain leads to EIT data that can only arise from an anisotropic conductivity distribution, in the incorrect model domain. The exact boundary shape is often impossible to obtain in practice as even by breathing, or reclining, a patient's chest shape can change significantly. (We remark that there are some techniques available for recovering the boundary shape from EIT data, see \cite{Kolehmainen2005,Kolehmainen2008b,Kolehmainen2010,Kolehmainen2013b,Nissinen2010,Nissinen2011,Darde2013a,Darde2013b}. However, they assume the starting conductivity is isotropic.)

The main reason why the reconstruction literature concentrates on the isotropic case is the {\em non-uniqueness} of the anisotropic problem. In dimension two, deforming the anisotropic conductivity by means of a boundary-fixing diffeomorphism does not change the EIT data, see \cite{Sylvester1990} and \cite[Thm 2]{Nachman1996}. Clearly, there exist uncountably many such deformations. In higher dimensions even less is known.

In our view, non-uniqueness should not be a show-stopper.  Namely, even the isotropic inverse conductivity problem is so severely ill-posed that strong regularization is needed for robust image formation from noisy data. It is well-known from regularization theory \cite{EnglHankeNeubauer1996}, that in linear inverse problems non-uniqueness can be dealt with by picking out a unique representative from the same-data equivalence class, for example the one with minimal square norm. In the nonlinear case of anisotropic EIT, we propose a non-iterative reconstruction method for two-dimensional anisotropic $2\times 2$ conductivities $\sigma$ that recovers the  unique isotropic conductivity that can be achieved as a deformation of $\sigma$ by \emph{isothermal coordinates}. The method can recover useful information in a distorted form, as seen in Figure~\ref{fig-HammerHead}.

Our starting point is the {\em D-bar methodology} for isotropic EIT. Rigorous mathematical analysis of the underlying physical problem has led to the development of direct (non-iterative)  solution techniques, called D-bar methods, which employ inverse scattering theory to uniquely recover the isotropic conductivity.  These methods have proven effective on simulated as well as experimental EIT data \cite{Siltanen2000,Isaacson2004,Isaacson2006,Astala2011,Hamilton2012}, and their  regularization properties are well-understood \cite{Knudsen2009}.  They have even been used on human chest EIT data which is known to correspond to anisotropic tissues.  The resulting images, while sometimes deformed, have nonetheless provided quality information \cite{Isaacson2006}.

We construct a combination of steps appearing in isotropic D-bar methods in such a way that
\begin{enumerate}
\item When applied to data arising from an anisotropic conductivity, the method recovers the above-mentioned unique isotropic conductivity (distorted).
\item When applied to data arising from an isotropic conductivity, the output of the new numerical method coincides with the standard D-bar reconstruction.
\end{enumerate}
Our new method explains how isotropic D-bar reconstruction methods have produced reasonable and informative reconstructions even when used on EIT data known to come from anisotropic media and when the boundary shape is not known precisely. Furthermore, our results pave the way for regularized anisotropic EIT along the lines of the isotropic analysis given in \cite{Knudsen2009}.

The rest of the paper is organized as follows.  Section~\ref{sec-background} provides a brief review of the mathematical history of the isotropic and anisotropic EIT problems and lays out the proposed stable reconstruction method, proven in Section~\ref{sec-CGOproof}.  Section~\ref{sec-algorithm} describes the implementation details for the novel numerical algorithm obtained from the constructive proof given in Section~\ref{sec-CGOproof}.  The algorithm is tested on simulated finite element data for $C^2$ smooth, as well as discontinuous, anisotropic conductivities and the results are presented in Section~\ref{sec-results}.  

\section{Mathematical History of the Problem}\label{sec-background}
\subsection{History of the Isotropic Problem}
The mathematical model for EIT, often called the \emph{inverse conductivity problem} was introduced in Calder\'{o}n's seminal paper \cite{Calder'on1980} for isotropic conductivities $\gamma$.  The voltage potential $u\in H^1(\Omega)$ inside a simply connected domain $\Omega\subset\R^n$ is the unique solution to the elliptic partial differential equation
\begin{equation}\label{eq-cond-iso}
\begin{array}{rcl}
\nabla\cdot\gamma\nabla u &=& \text{0 in $\Omega$}\\
u|_{\bndry} &=& \text{$f$ on $\bndry$},
\end{array}
\end{equation}
with Dirichlet boundary condition defined by the applied voltage $f\in H^{1/2}(\bndry)$, where $\nu$ is the outward facing unit normal vector to $\bndry$.  The domain $\Omega$ is a bounded simply connected set with a smooth boundary $\bndry$ and the conductivity $\gamma:\Omega\to\R$ is a bounded measurable function satisfying $\gamma(x)\geq c>0$ for almost every $x\in\Omega$.  The isotropic inverse conductivity problem is then to recover the scalar-valued coefficient function $\gamma(x)$ from knowledge of the Dirichlet-to-Neumann (D-N) map
\begin{equation}\label{eq-DN-iso}
\Lambda_\gamma: f\mapsto \left.\gamma \frac{\p u}{\p\nu}\right|_{\bndry}.
\end{equation}
Physically, the D-N map is a voltage-to-current density map that describes the current flux at the boundary that results from an applied boundary voltage.  

Calder\'{o}n posited that any bounded isotropic conductivity $\gamma$ could be uniquely determined using measurements taken at the boundary, i.e. the D-N map.  The first major breakthrough towards this goal came in the 1987 paper of Sylvester and Uhlmann where they showed global uniqueness in dimensions 3 and greater for $\C^\infty$ conductivites \cite{Sylvester1987}.  In 1988, Nachman gave the first global constructive proof in dimensions $n\geq 3$ for $\bndry\in C^{1,1}$ and $\gamma\in \C^{1,1}$.  Both papers rely heavily on special solutions called \emph{complex geometrical optics} (\textsc{CGO}) solutions, sometimes called \emph{exponentially growing solutions}.  Such solutions were originally introduced by Faddeev in 1966 \cite{Faddeev1966} and rediscovered by Sylvester and Uhlmann.  The constructive nature of Nachman's proof led to the formation of direct numerical D-bar algorithms by Knudsen \emph{et al.} \cite{Bikowski2011,Knudsen2011,Delbary2011,Bikowski2008a}.  For further information about direct \textsc{CGO} methods in dimensions $n\geq3$  see \cite{Bikowski2011} and the references therein.

Breakthroughs in the two-dimensional problem came in 1996 when Nachman \cite{Nachman1996} presented a constructive \textsc{CGO} based proof using a transformation to the Schr\"odinger equation, and in 2000 when Siltanen \emph{et al.} \cite{Siltanen2000}, using \cite{Nachman1996} as its backbone, developed the first numerical D-bar algorithm for twice-differentiable conductivities from D-N measurements and demonstrated its effectiveness on simulated data.  In the approach, the isotropic conductivity $\gamma$ is recovered by solving a $\dbar$ equation with non-physical scattering data defined by the D-N data.  In 2009, Knudsen \emph{et al.} \cite{Knudsen2009} showed that truncation of the scattering data (i.e. a low pass filtering) corresponds to a regularization strategy.  The numerical D-bar algorithm has also proven effective in the experimental setting using measurements taken on physical electrodes  \cite{Isaacson2004,Isaacson2006,DeAngelo2010}.  We remark that one of the ways to reduce the computation time is to use either a Born approximation or a simpler Green's function to sidestep or simplify the first step in the D-bar algorithm (i.e. solving the boundary integral equation for the traces of the {\sc CGO} solutions).  Such approaches were introduced in \cite{Siltanen2000} and \cite{Mueller2003}, and further examined in \cite{Toufic2012}.

The regularity requirement was reduced to one derivative in 1997 in the constructive \textsc{CGO} proof by Brown and Uhlmann \cite{Brown1997} based on a transformation to a first-order $\de$ and $\dbar$ system (see also \cite{Beals1985}).  In 2003, Knudsen \emph{et al.} \cite{Knudsen2003,Knudsen2004a} completed the proof of Brown and Uhlmann yielding the corresponding D-bar method and tested it on simulated data.  In 2003, Astala and P\"{a}iv\"{a}rinta \cite{Astala2006,Astala2006a} obtained the desired boundedness assumption $\gamma\in L^\infty(\Omega)$, thus solving Calder\'{o}n's problem for planar conductivities, by using a constructive \textsc{CGO} proof based on a transformation to the Beltrami equation.  The corresponding numerical algorithm was implemented in \cite{Astala2010,Astala2011}.

In recent years, the complex isotropic coefficient $\gamma$ problem in 2D has also been an emerging area of study.  In 2000, Francini \cite{Francini2000} extended the constructive results of Brown and Uhlmann to hold for $\gamma$ such that $\Re(\gamma),\; \Im(\gamma) \in W^{2,p}$ for some $p>2$ where $\Im (\gamma)$  is small.  The 2008 proof by Bukhgeim \cite{Bukhgeim2008}, strengthened by Bl{\aa}sten \cite{Eemeli2013PhDThesis}, while not constructive in nature, answers the uniqueness question for smooth and complex 2D $\gamma$ in the affirmative and does not require the smallness condition of \cite{Francini2000}.  Recently, the constructive \textsc{CGO} proof by Francini was completed by Hamilton \emph{et al.} \cite{Hamilton2012} resulting in the first complex coefficient D-bar algorithm which has been demonstrated to work on simulated data \cite{HM12_NonCirc,Hamilton2012,Natalia_Thesis} as well as experimental data. 

The conditional stability of various reconstruction approaches for the inverse (isotropic) conductivity problem have been studied extensively \cite{Liu1997,Barcel'o2001,Barcel'o2007,Clop2010}.

\subsection{History of the Anisotropic Problem}
If the conductivity is anisotropic, i.e. there is a spatially dependent preferred direction for the current to flow, then the conductivity is in fact a matrix-valued function $\sigma=\left[\sigma^{ij}(x)\right]$ where $1\leq i,j\leq n$.  The 2D anisotropic inverse conductivity problem is then to determine the matrix valued coefficient $\sigma(x)$ in the elliptic partial differential equation
\begin{eqnarray}\label{eq-conduct}
\nabla\cdotp \sigma\nabla u= \sum_{i,j=1}^2 
\frac \p{\p x^i}\left[\sigma^{ij}(x)\right] \frac \p{\p x^j} u &=& 0\hbox{ in } \Omega,\\
u|_{\p \Omega}&=&f,\nonumber
\end{eqnarray}
on a simply connected two-dimensional domain $\Omega\subset\R^2$ with prescribed boundary voltage $f\in\H^{1/2}(\bndry)$.  

It is well known that for $\sigma(x)=[\sigma^{ij}(x)]$, $1\leq i,j\leq 2$ a symmetric, positive-definite matrix function, that \eqref{eq-conduct} has a unique solution $u\in H^{1}(\Omega)$.  As $\sigma(x)$ is a symmetric positive-definite matrix, it can be written in terms of an eigen-decomposition $\sigma(x) = P^{-1}(x) D(x)P(x)$ where $P$ is an orthogonal matrix whose rows are the eigenvectors of $\sigma$ and $D$ is a diagonal matrix comprised of the eigenvalues of $\sigma$.  As $\sigma$ is positive definite, the eigenvalues must be positive real values (a reality in realistic conductivities).  In these terms, it is easy to see that the entries of $\sigma$ can be expressed in the traditional basis $P=I$, the $2\by2$ identity matrix, for $\R^2$ where the entries of $D$ then express the preferred direction and extent of the anisotropy of the tissue at a given point.  

As in the isotropic case, one aims to recover the anisotropic conductivity from measurements taken only at the boundary.  When $\sigma$ and $\bndry$ have some smoothness, the voltage-to-current density, or Dirichlet-to-Neumann (D-N), map can be defined as
\begin{equation}\label{eq-DNmap}
\Lambda_\sigma(f)= Bu|_{\p \Omega}=\nu \cdotp \sigma \nabla u\big|_{\bndry},
\end{equation}
where $\nu$ is the outward facing unit normal vector to $\bndry$, $u\in H^1(\Om) $ is the solution of \eqref{eq-conduct}, and $f$ the prescribed voltage on $\bndry$.  
Furthermore, by applying the Divergence theorem one finds 
\begin{equation}\label{eq-Q_l}
Q_{\l,\Om} (\phi):=\int_\Omega  \sum_{i,j=1}^2\sigma^{ij}(x)\frac {\p u}{\p x^i}
\frac {\p u}{\p x^j}
dx=\int_{\p \Omega} \Lambda_\l(\phi) \phi\, dS(x),
\end{equation}
which represents the power needed to maintain the voltage potential $\phi$ on $\bndry$ where $dS(x)$ denotes the arc length on $\bndry$.  By the symmetry
of $\Lambda_\sigma$, knowing $Q_{\l,\Om}$ is equivalent to knowing $\Lambda_\l$.   

Consider a diffeomorphism $F:\Omega\to \Omega$,  $F(x)=(F^1(x),F^2(x))$,  with $F|_{\p
\Omega}=\hbox{Identity}$. Making the change of variables $y=F(x)$
and setting $v=u\circ F^{-1}$ in the first
integral in \eqref{eq-Q_l}, we obtain
\[\nabla\cdotp (F_*\sigma)\nabla v=0\quad\hbox{in }\Om,\]
where
\begin{equation}\label{eq-Fstar-entries}
(F_*\sigma)^{ij}(y)=\left.
\frac 1{\det\left[\frac {\p F^i}{\p x^j}(x)\right]}
\sum_{p,q=1}^2 \frac {\p F^i}{\p x^p}(x)
\,\frac {\p F^j}{\p x^q}(x)  \sigma^{pq}(x)\right|_{x=F^{-1}(y)}.
\end{equation}
where $F_*$ denotes the \emph{push-forward} of the conductivity $\l$ by the diffeomorphism $F$.  Moreover, since $F$ is the identity at $\bndry$, \eqref{eq-Q_l} implies
\[\Lambda_{F_*\l}=\Lambda_\l.\]
Thus, the change of coordinates shows that there is a large class of conductivities which give rise to the same electrical measurements at the boundary, illustrating the non-uniqueness of the 2D anisotropic inverse problem.

The problem is thus not to recover the true anisotropic conductivity (as that is not possible) but rather to determine it up to the action of a class of diffeomorphisms. 
Sylvester \cite{Sylvester1990}, Sun and Uhlmann \cite{Sun2003}, and Astala \emph{et al.} \cite{Astala2005} showed
that there exists a unique isotropic conductivity $\gamma=\sqrt{\det\sigma}$, corresponding to the anisotropic conductivity $\sigma$ under a quasiconformal change of coordinates $F$ (not necessarily preserving the boundary), such that
\[\Lambda_\gamma = \Lambda_{F_*\sigma},\]
for $\sigma\in C^3$, $\sigma\in W^{1,p}$, and $\sigma\in L^\infty$, respectively.  Thus by solving the inverse anisotropic conductivity problem one can recover an isotropic representative for an equivalence class of anisotropic conductivities, i.e. an isotropization of the original conductivity.

In 2005, Astala \emph{et al.} \cite{Astala2005} showed that it is possible to determine an $L^\infty$ smooth anisotropic conductivity up to a $W^{1,2}$ diffeomorphism.  Their result, while constructive in nature, requires the construction of the diffeomorphism $F$ to determine the isotropic D-N map $\Lambda_\gamma$ corresponding to the anisotropic D-N map $\Lambda_\sigma$, and then proceeds along the isotropic $L^\infty$ approach of \cite{Astala2006a}.  In particular, the \emph{push forward} of the D-N map, i.e. $\left(F_{_{\bndry}}\right)_\ast\Lambda_\sigma$ where $F_{_{\bndry}}=F|_{\bndry}:\bndry\to\p\widetilde{\Omega}$, is constructed via
\begin{equation}\label{eq-F-DN}
\left(F_{_{\bndry}}\right)_\ast\Lambda_\sigma:\tilde{h}\mapsto 
\left.\frac{d F^{-1}_{_{\bndry}}}{d\tilde{x}}\middle(\cdot\right)\left[\Lambda_\sigma\left(\tilde{h}\circ F_{_{\bndry}}\right)\right]\circ \left(F_{_{\bndry}}\right)^{-1}
\end{equation}
for functions $\tilde{h}\in H^{1/2}(\p\widetilde{\Omega})$.  Even the determination of $F_{_{\bndry}}$ requires a large frequency limit of the anisotropic traces (see \eqref{eq-Fz-lim}), which from a numerical standpoint, is very unstable, especially in the presence of noise.

In dimensions $n\geq 3$ even less is known.  Kohn and Vogelius \cite{Kohn1984,Kohn1985} showed that only piecewise analytic conductivities can be reconstructed, and for Riemannian manifolds the techniques have been generalized in \cite{Lee1989,Lassas2001,Lassas2003}.

\subsection{The Proposed Approach}
In this paper we build upon the theoretical anisotropic results of \cite{Astala2005} and constructive isotropic proofs of \cite{Astala2006a,Astala2006} and \cite{Nachman1996} to obtain a stable constructive \textsc{CGO} based proof, and related numerical reconstruction algorithm, for anisotropic $C^2$ conductivities that does not construct the unstable diffeomorphism $F$.  We avoid the explicit construction of the map $F$ by instead deriving a formula for the Beltrami scattering data (of \cite{Astala2006a}) in terms of the anisotropic \textsc{CGO} boundary traces (of \cite{Astala2005}).

The reconstruction method from infinite-precision anisotropic D-N data consists of the following steps:

\small
\[\begin{array}{c}
\text{Anisotropic} \\
\text{D-N data}\\
\Lambda_\sigma
\end{array}  \overset{1}{\longrightarrow} \begin{array}{c}
\text{Anistropic}\\
\text{\textsc{CGO} traces}
\end{array}\overset{2}{\longrightarrow} \begin{array}{c}
\text{Beltrami}\\
\text{Scattering}\\
\text{Data}
\end{array} \overset{3}{\longrightarrow}
\begin{array}{c}
\text{Schr\"odinger}\\
\text{Scattering}\\
\text{Data}
\end{array}
\overset{4}{\longrightarrow}
\begin{array}{c}
\text{Isotropic}\\
\text{Representative}\\
\gamma
\end{array}.\]

\normalsize

We remark that Henkin and Santacesaria \cite{HenkinSantacesaria2010}, have proposed an analogous reconstruction method based on
Nachman's formulation, \cite{Nachman1996}.  Our approach is based on the Astala-P\"aiv\"arinta approach, and allows a lower level of regularity for the computation of the scattering data in the reconstruction method.  We then relate the isotropic (Beltrami) scattering data (for $L^\infty$ conductivities) to the isotropic (Schr\"odinger) scattering data and the representative isotropic $W^{2,p}$-smooth conductivity $\gamma$ is recovered using the D-bar method \cite{Siltanen2000} based on Nachman's constructive proof \cite{Nachman1996}.

In Steps 1-3 above, it is enough to require that $\sigma(z)$ and $\widehat{\sigma}=\frac{\sigma(z)}{\det{\sigma(z)}}$
are in $L^\infty(\Omega)$ as the construction is based on Astala-P\"aiv\"arinta
technique \cite{Astala2006a,Astala2006,Astala2005}.  The method proposed by Henkin and Santacesaria \cite{HenkinSantacesaria2010} results in a construction analogous to the Steps 1-3 for an anisotropic conductivity  $\sigma$ that is in  $C^2(\overline \Omega)$ and is isotropic near $\partial \Omega$.  By instead using the Astala-P\"aiv\"arinta approach for these steps, these regularity requirements are improved for the recovery of the scattering data.

In Step 4, numerical studies \cite{AstalaPaivarintaReyesSiltanen2013} indicate that discontinuous isotropic conductivities $\gamma$ can be recovered when connecting the Beltrami and Schr\"odinger scattering data and solving the $\dbar_k$ equation of \cite{Nachman1996}.  However, the theory is not yet complete.  Therefore, in Step~4, we will assume that $\sigma$ is in $W^{2,p}(\Omega)$, $p>1$, and is the constant 1 near $\partial \Omega$.

We note that the assumptions that $\sigma$ is equal to the constant 1 near $\p \Omega$, and $\Omega=\D$ can be removed when conductivity and the boundary of the domain are assumed to be smoother.   Indeed,
if  $\sigma$ is in  $C^3(\overline \Omega)$ and the boundary $\partial \Omega$ is $C^3$-smooth,  the functions equivalent to the boundary values of the determinant
$\det(\sigma)|_{\partial \Omega}$ and its conormal derivative
$\nu\,\cdotp \sigma  \nabla \det(\sigma)|_{\partial \Omega}$
can be uniquely determined using $\Lambda_\sigma$ via explicit formulas
(see \cite{Kang2003}).
Thus, if  $\sigma_0$ is an anisotropic conductivity
in $\Omega_0\subset\subset \Omega$ that is
$C^3(\overline \Omega_0)$-smooth,  $\partial \Omega_0$  is $C^3$-smooth, and we are given
$\partial \Omega_0$ and $\Lambda_{\sigma_0}$, the construction
can be started by reconstructing the functions
$A_0=\det(\sigma_0)|_{\partial \Omega_0}$ and
$B_0=\nu\,\cdotp \sigma_0  \nabla \det(\sigma_0)|_{\partial \Omega_0}.$
We can then construct, in $\Omega\setminus \Omega_0$, a $W^{2,p}(\Omega\setminus \Omega_0)$-smooth, $p>1$, isotropic conductivity $\sigma_1$  such that $\sigma_1$ is one near $\p \Omega$ and
$\det(\sigma_1)|_{\partial \Omega_0}=A_0$ and $\nu\,\cdotp \sigma_1  \nabla \det(\sigma_1)|_{\partial \Omega_0}=B_0$.
We then define the conductivity $\sigma$ in $\Omega$ which
coincides with $\sigma_0$ in $\Omega_0$ and $\sigma_1$ in
$\Omega\setminus \Omega_0$. This conductivity may be only $L^\infty$-smooth
near  $\partial\Omega_0$.  However, this regularity is enough in Steps~1-3 above.  One can
compute the D-N map $\Lambda_\sigma$ using  $\Lambda_{\sigma_0}$ and
$\sigma|_{\Omega\setminus \Omega_0}$ (see \cite{SiltanenTamminen,Nakamura2005}).
Here, we may assume that the domain $\Omega$  is
a larger disc containing the original domain $\Omega_0$.
Numerically, this step involves some FEM solutions of intermediate D-N maps in the intermediate region, but such computations are less demanding than the solution of the D-bar equation.

Finally, the conductivity $\sigma$ can be considered as a Riemannian metric in $\Omega$,  see \cite{Lee1989,Lassas2001,Lassas2003}.  Then, $\sigma$ is piecewisely $C^3$-smooth in the Euclidean coordinates, we see that in the  Riemannian boundary normal coordinates near $\p\Omega_0$ it is $W^{2,p}$-smooth for all $p>1$. Furthermore, when $F:\R^2\to \R^2$ is the quasiconformal map that makes $\sigma$ isotropic, we see that $F_*\sigma$ is an isotropic conductivity that is in $W^{2,p}_{loc}(\R^2)$, and hence it has the regularity needed in Step~4.

Summarizing, the assumptions that $\sigma\equiv 1$ near $\bndry$ and $\Omega=\D$ could in theory be removed. However, for simplicity, we require below that these assumptions
are valid.
\section{The Constructive \textsc{CGO} Anisotropic Proof}\label{sec-CGOproof}
\subsection{Preliminary Considerations}\label{sec-prelim}
In the following we identify $\R^2$ and  $\C$  by the map $(x_1,x_2)\mapsto x_1+ix_2$ and denote $z=x_1+ix_2$. We use the standard notations
\begin{eqnarray*}
\p_z=\frac 12(\p_1-i\p_2),\quad \p_{\overline z}=\frac 12(\p_1+i\p_2), 
\end{eqnarray*}
where  $\p_j=\p/\p x_j$.  For convenience, we work on the unit disc $\Omega=\D\subset\R^2$ but the results generalize to arbitrary domains.  Below we consider $\sigma:\Om\to \R^{2\times 2}$, with $\sigma=1$ near $\bndry$,  to be extended as a function $\sigma:\C\to \R^{2\times 2}$ by defining $\sigma(z)=I$ for  $z\in \C\setminus \Om$. 
If $\Om\subset \R^2$ is a bounded domain, it is convenient
to consider the class of  matrix functions $\sigma=[\sigma^{ij}]$
such that 
\begin{equation}\label{basic assump}
[\sigma^{ij}]\in L^\infty(\Om;\R^{2\times 2}), \quad [\sigma^{ij}]^t=[\sigma^{ij}],\quad C_0^{-1}I \leq [\sigma^{ij}]\leq C_0I,
\end{equation}
where $C_0>0$ and the super-script $^t$ denotes the non-conjugate matrix transpose.  In this paper, the minimal possible value of $C_0$ is denoted by $C_0(\sigma)$.  Note that it is necessary to require $C_0(\sigma)<\infty$, else there would be counterexamples showing that even the equivalence class of the conductivity cannot be recovered \cite{Greenleaf2003,Greenleaf2003a}.

For the symmetric positive-definite conductivity $\sigma=\sigma^{ij}$ we define the coefficient (see \cite{Sylvester1990,Astala2006a,Iwaniec2001})
\begin{eqnarray}\label{eq-mu-tilde}
\widetilde{\mu}(z)= 
\frac {\sigma^{11}(z)-\sigma^{22}(z)+2i\,\sigma^{12}(z)}
 {\sigma^{11}(z)+\sigma^{22}(z)+2\sqrt{\det(\sigma(z))}},
\end{eqnarray}
which will serve as the Beltrami coefficient for the change of coordinates described below.  The coefficient $\widetilde{\mu}(z)$ satisfies $|\widetilde{\mu}(z)|\leq \kappa<1$ and is compactly supported.

Next we introduce a $W^{1,2}$-diffeomorphism (not necessarily preserving the boundary) that transforms the anisotropic conductivity into an isotropic one. 

\begin{lemma}\label{lem: 1} 
There is a quasiconformal homeomorphism $F:\C\to \C$
such that
\begin{equation}\label{eq-asympt-withA}
F(z)=z+\frac A{z}+\O\left(\frac 1{z^2}\right)=z+\O\left(\frac 1{z}\right)\quad\hbox{as }|z|\to \infty,
\end{equation}
and such that $F\in W^{1,p}_{loc}(\C;\C)$, $2<p<p(C_0)=\frac {2C_0}{C_0-1}$
for which
\begin{equation}\label{eq-isotropic-gam-from-sig-F}
(F_*\sigma)(z)=\gamma(z):=
\det\sqrt{\sigma(F^{-1}(z))}.
\end{equation}
\end{lemma}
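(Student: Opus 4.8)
The plan is to produce $F$ as the normalized solution of the Beltrami equation $\dbar F=\widetilde{\mu}\,\de F$ whose coefficient is exactly the $\widetilde{\mu}$ of \eqref{eq-mu-tilde}, and then to check that this particular $F$ isotropizes $\sigma$. First I would invoke the measurable Riemann mapping theorem: since $\widetilde{\mu}$ is measurable, compactly supported, and satisfies $\|\widetilde{\mu}\|_\infty\le\kappa<1$, there is a unique quasiconformal homeomorphism $F:\C\to\C$ solving $\dbar F=\widetilde{\mu}\,\de F$ with the hydrodynamic normalization $F(z)-z\to0$ as $|z|\to\infty$. Because $\widetilde{\mu}$ vanishes outside a compact set, $F$ is conformal near $\infty$, and a Laurent expansion there yields $F(z)=z+A/z+\O(1/z^2)$, which is \eqref{eq-asympt-withA}. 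For the integrability claim I would use Astala's higher-integrability theorem, by which the derivatives of a $K$-quasiconformal map lie in $L^p_{loc}$ for every $p<2K/(K-1)$. Writing $\sigma$ in its eigenbasis with eigenvalues $\lambda_1\le\lambda_2$ in $[C_0^{-1},C_0]$ and substituting into \eqref{eq-mu-tilde} gives $|\widetilde{\mu}|=(s-1)/(s+1)$ with $s=\sqrt{\lambda_2/\lambda_1}$, so the maximal distortion is $K=s\le C_0$; since $p\mapsto 2p/(p-1)$ is decreasing, $2K/(K-1)\ge 2C_0/(C_0-1)=p(C_0)$, which gives the asserted range $2<p<p(C_0)$.

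It remains to establish \eqref{eq-isotropic-gam-from-sig-F}, namely $F_*\sigma=\gamma I$, and I would split this into two facts. Rewriting the entrywise push-forward \eqref{eq-Fstar-entries} in matrix form as $F_*\sigma=\big((DF)\,\sigma\,(DF)^t/\det DF\big)\circ F^{-1}$ and taking determinants shows at once that $\det(F_*\sigma)=\det\sigma\circ F^{-1}$: the determinant of the conductivity is a push-forward invariant in two dimensions. The substantive fact is that $F_*\sigma$ is a \emph{scalar} matrix, and this is exactly where the choice \eqref{eq-mu-tilde} enters. The assignment $\sigma\mapsto\widetilde{\mu}$ depends only on the unimodular part $\widehat{\sigma}=\sigma/\sqrt{\det\sigma}$ and is the standard bijection between unit-determinant symmetric positive-definite matrices and Beltrami coefficients; requiring $\dbar F=\widetilde{\mu}\,\de F$ forces $(DF)^{t}DF$ to be pointwise proportional to $\widehat{\sigma}^{-1}$, which makes $(DF)\,\sigma\,(DF)^t$ proportional to $I$. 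Granting $F_*\sigma=\gamma I$, the determinant identity gives $\gamma^2=\det\sigma\circ F^{-1}$, hence $\gamma=\sqrt{\det\sigma}\circ F^{-1}=\det\sqrt{\sigma}\circ F^{-1}$, which is \eqref{eq-isotropic-gam-from-sig-F}.

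The main obstacle is precisely this scalarness computation, which must be read in the weak sense since $\sigma$ is only $L^\infty$. Rather than manipulating \eqref{eq-Fstar-entries} entry by entry, the cleanest route is to pass through the equivalence between the anisotropic conductivity equation and a first-order Beltrami system: one shows that if $u$ solves $\nabla\cdot\sigma\nabla u=0$ then the complex potential $w=u+i\widetilde{u}$ built from a $\sigma$-harmonic conjugate satisfies a $\dbar$-system governed by $\widetilde{\mu}$, and that $u\circ F^{-1}$ then solves $\nabla\cdot(\gamma\nabla\,\cdot\,)=0$, identifying the push-forward with the scalar $\gamma$. Verifying this correspondence at the available regularity, together with the behaviour across $\de(\mathrm{supp}\,\widetilde{\mu})$, is where the real work lies; the existence, the asymptotics \eqref{eq-asympt-withA}, and the exponent count are then routine consequences of standard quasiconformal theory.
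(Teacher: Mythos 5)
Your proposal is correct and follows essentially the same route as the paper: both rest on taking $F$ to be the normalized solution of the Beltrami equation $\dbar F=\widetilde{\mu}\,\de F$ with $\widetilde{\mu}$ as in \eqref{eq-mu-tilde} and exploiting the analyticity of $F$ outside the unit disc to sharpen the asymptotics to $z+A/z+\O(1/z^2)$ --- the paper's auxiliary construction $G(z)=1/(F(1/z)-F(0))$ with a removable singularity at the origin is precisely your ``Laurent expansion at infinity'' carried out explicitly. The one piece you leave as a sketch, the pointwise scalarness of $F_*\sigma$ at $L^\infty$ regularity, is exactly the piece the paper also does not prove but imports from \cite{Astala2005}, while your distortion computation $|\widetilde{\mu}|=(s-1)/(s+1)$, $K=s\le C_0$, correctly supplies the $W^{1,p}_{loc}$ exponent range that the paper states without derivation.
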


\begin{proof}
The existence of such an $F$ with $F(z)=z+\O(1/z)$ is given in \cite{Astala2005}.  Here we make the asymptotics of $F$ more precise.  

The map $F$ satisfies the following Beltrami equation
\begin{equation}\label{eq-Beltrami-F}
\dbarz F(z)=\widetilde{\mu}(z)\dez F(z), \quad z\in\C
\end{equation}
where $\tilde{\mu}$ is given in \eqref{eq-mu-tilde}.  Define 
\begin{equation}\label{eq-Gz-lem1}
G(z)=\frac{1}{F\left(\frac{1}{z}\right)-F(0)}.
\end{equation}
As $F:\C\to\C$ is a homeomorphism, $F(z)=z+\O(1/z)$, and $\dbarz F(z)=0$ for $|z|>1$, $G(z)$ satisfies 
\begin{eqnarray*}
\dbarz G(z)&=& 0 \quad \text{in $\D\setminus\{0\}$}\\
\lim_{z\to 0} G(z) &=& \lim_{z\to 0} \frac{z}{1+\O(1)-zF(0)}=0.
\end{eqnarray*}
Therefore $G$ has a removable singularity at $z=0$, and if we define $G(0)=0$, then the extended $G$ satisfies 
\[\dbarz G(z)= 0 \quad \text{in $\D$},\]
and the expansion
\[G(z)=z+b_2z^2+b_3 z^3+\cdots\]
holds near $z=0$.  Furthermore, $H(z):=z/G(z)$ is analytic in $B_\epsilon(0)$ as it is analytic in $B_\epsilon(0)\setminus\{0\}$ and bounded as $\lim_{z\to 0}\frac{z}{G(z)}=1$, and thus
\[H(z)=\sum_{j=0}^\infty c_j z^j \quad \text{for $|z|<\epsilon$ and $c_0=1$.}\]
Now, from \eqref{eq-Gz-lem1}, 
\begin{eqnarray*}
F(z)&=&z\;H\left(\frac{1}{z}\right)+F(0)\\
&=& z\left(1+c_1\frac{1}{z} + c_2\frac{1}{z^2} +c_3\frac{1}{z^3}\right) + F(0)\\
&=& \left(c_1+F(0)\right) + z+ c_2\frac{1}{z} + c_3\frac{1}{z^2} + \cdots\\
&=& z+\frac{A}{z} + \O\left(\frac{1}{z^2}\right)
\end{eqnarray*}
for $|z|>\frac{1}{\epsilon}$, where $c_1=-F(0)$ is dictated by the asymptotics of $F(z)$, and $c_2=A$ is the sought-after constant, completing the proof.  
\end{proof}

Next we review the setup for the anisotropic problem described in \cite{Astala2005}.  In addition to the anisotropic conductivity equation \eqref{eq-conduct}, we consider the corresponding isotropic conductivity equation for $\gamma(\zeta)=\left(F_*\sigma\right)(\zeta)$ where $\zeta\in\tilde{\Omega}=F(\Omega)$.  For these considerations, we observe that if $u$ satisfies equation \eqref{eq-conduct} and $\gamma$ is as in \eqref{eq-isotropic-gam-from-sig-F} then the function
\[w(\zeta)=u(F^{-1}(\zeta))\in H^1\left(\tilde \Om\right),\]
satisfies the isotropic conductivity equation
\begin{eqnarray}\label{EQ 1}
\nabla\cdotp \gamma\nabla w&=&0\quad \hbox{ in }\tilde \Om,\\
w|_{\p \tilde \Omega}&=&\phi\circ F^{-1}.\nonumber
\end{eqnarray}
Thus, $\gamma$ can be considered as a scalar, isotropic $L^\infty$ smooth
conductivity $\gamma I$. We also extend the isotropic function $\gamma:\tilde \Om\to \R_+$ to a function $\gamma:\C\to \R_+$ by defining $\gamma(\zeta)=1$ for  $\zeta\in \C\setminus \tilde \Om$.

While solving the isotropic inverse problem in \cite{Astala2006a}, the interplay between the scalar conductivities $\gamma(\zeta)$ and $\frac{1}{\gamma(\zeta)}$ played a crucial role.  Motivated by this, we define
\[\hat \sigma^{ij}(z)=\frac 1{\det(\sigma(z))}\sigma^{ij}(z), \quad z\in\C,\]
for the anisotropic conductivity $\sigma$.  Note that for an isotropic conductivity $\sigma$, this would reduce to $\hat \sigma=1/\sigma$.

Now let $F$ be the quasiconformal map defined in  Lemma~\ref{lem: 1} and $\gamma=F_*\sigma$ as in \eqref{eq-isotropic-gam-from-sig-F}.  We say that $\hat w\in H^1\left(\tilde \Om\right)$ is a $\gamma$-harmonic conjugate of $w$ if
\begin{eqnarray}\label{EQ 2}
\p_1\hat w(\zeta)&=&-\gamma(z)\p_2 w(\zeta)\\
\p_2\hat w(\zeta)&=&\gamma(z)\p_1 w(\zeta)\nonumber
\end{eqnarray}
where $\zeta\in\widetilde{\Omega}$.  Using $\hat w$, we define the function $\hat u$, which we call the $\sigma$-harmonic conjugate of $u$,
\[\hat u(z)=\hat w(F(z)).\]
To determine the equation governing $\hat u$, it easily follows that (c.f. \cite{Astala2006a})
\begin{equation}\label{EQ 3}
\nabla\cdotp \frac 1 {\gamma} \nabla\hat w=0\quad \hbox{ in }\tilde \Om,
\end{equation}
and by changing the coordinates to $\zeta=F(z)$ we see that $1/\gamma=F_*\hat \sigma$.
These facts imply
\begin{equation}\label{EQ 4}
\nabla\cdotp \hat \sigma \nabla\hat u=0\quad \hbox{ in }\Om.
\end{equation}
Thus, $\hat u$ is the $\widehat\sigma$-harmonic conjugate function of $u$ and we have 
\begin{equation}\label{EQ 5}
\nabla \hat u=J \sigma \nabla u,\quad
\nabla  u=J\widehat \sigma \nabla \hat u, \qquad \text{where $J=\left(\begin{array}{cc}
0 & -1\\
1 & 0\\
\end{array}\right)$}.
\end{equation}
Since $u$ is a solution to the conductivity equation if and only if $u+c$ (where $c\in \C$) is a solution, we see from (\ref{EQ 5}) that the Cauchy data pairs $C_{\sigma}$ determine the pairs  $C_{\hat \sigma}$ and vice versa. Thus we get, almost for free, that $\Lambda_\sigma$ determines  $\Lambda_{\hat \sigma}$, as well.

Lemma 3.2 and Corollary 3.3 of \cite{Astala2005} describe the relationship between the anisotropic conductivity equations
\[\nabla\cdot\sigma\nabla u=0,\qquad \text{and} \qquad \nabla\cdot\widehat{\sigma}\nabla\widehat{u}=0, \quad z\in\C,\]
and the anisotropic Beltrami equation for $\sigma$, for fixed $k\in\C$,
\begin{equation}\label{eq-aniso-Beltrami}
\begin{array}{rcl}
\dbarz g(z)&=& \mu_1(z)\dez g(z)+\mu_2(z)\overline{\dez g(z)}, \quad z\in\C\\
g(z,k) &=& e^{ikz}\left(1+\O_k\left(\frac{1}{|z|}\right)\right),
\end{array}
\end{equation}
for $g=u+i\widehat{u}$ and the scalar Beltrami coefficients $\mu_1$ and $\mu_2$ defined by
\begin{equation}\label{eq-aniso-Beltrami-coeffs-mu1-mu2}
\mu_1=\frac{\sigma^{22}-\sigma^{11}-2i\sigma^{12}}{1+\text{tr}(\sigma)+\det(\sigma)},\qquad \text{and} \qquad \mu_2=\frac{1-\det(\sigma)}{1+\text{tr}(\sigma)+\det(\sigma)}.
\end{equation}
Note that $\O_k(h(z))$ denotes a function of $(z,k)$ that satisfies
\[\left|\O_k(h(z))\right|\leq C(k)|h(z)|,\quad \forall z\in\C,\]
where $C(k)$ is a constant depending only on $k\in\C$.
We will also make use of the Beltrami system for $\hat{\sigma}$
\begin{equation}\label{eq-aniso-Beltrami-Minusmu2}
\begin{array}{rcl}
\dbarz \widetilde{g}(z)&=& \mu_1(z)\dez \widetilde{g}(z)-\mu_2(z)\overline{\dez\widetilde{g}(z)}, \quad z\in\C\\
\widetilde{g}(z,k) &=& e^{ikz}\left(1+\O_k\left(\frac{1}{|z|}\right)\right),
\end{array}
\end{equation}
resulting from using $\widehat{\sigma}$ in \eqref{eq-aniso-Beltrami}.  Here $\mu_1$ and $\mu_2$ are as defined above in \eqref{eq-aniso-Beltrami-coeffs-mu1-mu2} in terms of $\sigma$.

From \cite{Astala2005}, the boundary values of $u$ and $\widehat{u}$ are related by
\[\tau\cdot\nabla\widehat{u}|_{\bndry}=\Lambda_\sigma\left({u}\middle|_{\bndry}\right),\]
or equivalently,
\begin{equation}\label{eq-u-uhat-DN-tang}
\partial_T\widehat{u}=\Lambda_\sigma\left({u}\right), \quad \text{for $z\in\bndry$},
\end{equation} 
where $T=(-\nu_2,\nu_1)$ is the unit tangent vector to $\bndry$ and thus $\partial_T$ denotes the tangential derivative on $\bndry$ in the counter-clockwise direction.  We define the $\sigma$-Hilbert transform by
\begin{eqnarray}\label{eq-H-sig}
\H_\sigma&:& H^{1/2}(\p \Om)\to H^{1/2}(\p \Om)/\C\\
\nonumber
& &\re g|_{\p \Om}\mapsto \im g|_{\p \Om}+\C.
\end{eqnarray}
Note that if $\re g|_{\p \Om}=u|_{\p \Om}$ and  $\im g|_{\p \Om}=\hat u|_{\p \Om}$, then the $\sigma$-Hilbert transform $\H_\sigma$ connects the function $u$ and it's $\widehat{\sigma}$-conjugate function $\widehat{u}$ by
\begin{equation}\label{eq-Hmu-def-aniso}
\mathcal{H}_\sigma (u)=\widehat{u}.
\end{equation}
By taking the tangential derivative and \eqref{eq-u-uhat-DN-tang} we have
\begin{equation}\label{eq-Hmu-def-aniso-tan-deriv}
\partial_T\left(\mathcal{H}_\sigma (u)\right)=\Lambda_\sigma u,
\end{equation}
and thus, as in \cite{Astala2006}, we see that $\Lambda_\sigma$ determines the $\sigma$-Hilbert transform $\H_\sigma$ defined by \eqref{eq-H-sig} and the $\sigma$-Hilbert transform is essentially just a reformulation of the D-N map $\Lambda_\sigma$.

From \cite[Lemma 3.4]{Astala2005} we have the following lemma involving the anisotropic D-N maps $\Lambda_\sigma$ and $\Lambda_{\hat\sigma}$:
\begin{lemma}\label{lem: 2} 
The Dirichlet-to-Neumann map $\Lambda_\sigma$
determines the maps $\Lambda_{\hat \sigma}$ and $\H_\sigma$.
\end{lemma}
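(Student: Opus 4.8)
The plan is to exploit the pointwise gradient identity~\eqref{EQ 5} together with the symmetry of the construction under the involution $\sigma\mapsto\hat\sigma$. The key structural fact is that this involution has order two: since $\det\hat\sigma=1/\det\sigma$, one checks $\widehat{\widehat{\sigma}}=\sigma$, so every relation between a $\sigma$-harmonic $u$ and its conjugate $\hat u$ (which is $\hat\sigma$-harmonic by~\eqref{EQ 4}) has a mirror obtained by swapping $\sigma\leftrightarrow\hat\sigma$ and $u\leftrightarrow\hat u$. I would also recall that knowing $\Lambda_\sigma$ is equivalent to knowing its graph, the Cauchy data set $\{(u|_{\bndry},\Lambda_\sigma u):\nabla\cdot\sigma\nabla u=0\}$, and similarly for $\hat\sigma$; the lemma then amounts to manufacturing the graph of $\Lambda_{\hat\sigma}$ and the transform $\H_\sigma$ out of the graph of $\Lambda_\sigma$.

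First I would dispose of $\H_\sigma$. By~\eqref{eq-Hmu-def-aniso-tan-deriv} the tangential derivative of $\H_\sigma u=\hat u$ equals $\Lambda_\sigma u$. The total boundary current vanishes, $\int_{\bndry}\Lambda_\sigma u\,dS=\int_\Omega\nabla\cdot\sigma\nabla u\,dx=0$, so $\Lambda_\sigma u$ is the tangential derivative of a single-valued function on the closed curve $\bndry$; integrating it from a fixed basepoint recovers $\hat u|_{\bndry}$ up to an additive constant. Since the target of $\H_\sigma$ in~\eqref{eq-H-sig} is exactly $H^{1/2}(\bndry)/\C$, this determines $\H_\sigma$ from $\Lambda_\sigma$.

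For $\Lambda_{\hat\sigma}$ I would read off two boundary identities from~\eqref{EQ 5}. The first equation there yields $\partial_T\hat u=\nu\cdot\sigma\nabla u=\Lambda_\sigma u$, i.e.~\eqref{eq-u-uhat-DN-tang}; the second, $\nabla u=J\hat\sigma\nabla\hat u$, yields after solving for $\hat\sigma\nabla\hat u$ and pairing with $\nu$ (using $J^{-1}=-J$ and $J\nu=T$) the mirror identity $\Lambda_{\hat\sigma}\hat u=\nu\cdot\hat\sigma\nabla\hat u=\partial_T u$. Now fix any $\sigma$-harmonic $u$ with conjugate $\hat u$. From the $\sigma$-Cauchy pair $(u|_{\bndry},\Lambda_\sigma u)$ I can produce both entries of the $\hat\sigma$-Cauchy pair $(\hat u|_{\bndry},\Lambda_{\hat\sigma}\hat u)$: the boundary value $\hat u|_{\bndry}=\H_\sigma u$ comes from the integration above, while the associated current $\Lambda_{\hat\sigma}\hat u=\partial_T u$ is merely the tangential derivative of the prescribed datum $u|_{\bndry}$. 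As $u$ runs over all $\sigma$-harmonic functions, $\hat u$ runs over all $\hat\sigma$-harmonic functions (conjugation being a bijection modulo constants, with inverse $\pm\H_{\hat\sigma}$ by the order-two symmetry), so this procedure sweeps out the entire graph of $\Lambda_{\hat\sigma}$; equivalently $\Lambda_{\hat\sigma}=\partial_T\circ\H_\sigma^{-1}$.

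The routine part is the planar vector algebra that turns~\eqref{EQ 5} into these two boundary identities, which I would verify from $J\nu=T$, $JT=-\nu$ and the symmetry of $\sigma$. The genuinely delicate point, where I would concentrate, is the bookkeeping of additive constants and the quotient by $\C$: I must confirm that $\hat u$ is single-valued (ensured by the vanishing total current) so the tangential integration is unambiguous, and that conjugation descends to an honest involution on $H^{1/2}(\bndry)/\C$, so that ``$u$ exhausts all solutions iff $\hat u$ does'' is legitimate and $\H_\sigma$ is invertible. Since the underlying relations are precisely those of~\cite[Lemma 3.4]{Astala2005}, the residual effort is confined to tracking these sign and quotient conventions.
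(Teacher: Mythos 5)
Your argument is correct and takes essentially the same route as the paper, which likewise obtains $\H_\sigma$ from $\partial_T(\H_\sigma u)=\Lambda_\sigma u$ (plus the vanishing of the total current, which makes the tangential antiderivative single-valued modulo $\C$) and obtains $\Lambda_{\hat\sigma}$ by using (\ref{EQ 5}) to pass from the $\sigma$-Cauchy pairs to the $\hat\sigma$-Cauchy pairs, deferring the formal statement to Lemma 3.4 of Astala--Lassas--P\"aiv\"arinta. The one caveat is the sign in your mirror identity $\Lambda_{\hat\sigma}\hat u=\partial_T u$: your algebra is faithful to (\ref{EQ 5}) as printed, but consistency with $\H_{\hat\sigma}\circ\H_\sigma=-I+\Ave$ in \eqref{eq-Hsig-HsigMat-composition} (equivalently, with $\H_{\hat\sigma}(\hat u)=-u$) forces $\Lambda_{\hat\sigma}\hat u=-\partial_T u$; either sign leaves the determination claim intact.
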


\subsection{The Non-linear Isotropic Beltrami Scattering Transform}\label{sec-scat}
Next we review the scattering data for the $\dbark$ equation related to the isotropic Beltrami equation \cite{Astala2006,Astala2006a}, in our setting.  Let 
\begin{equation}\label{eq-u-gamma}
u_\gamma(\zeta,k)=\re f_{\mu_\gamma}(\zeta,k)+i\,\im f_{-\mu_\gamma}(\zeta,k),
\end{equation}
where $f_{\pm\mu_\gamma}$ are the \textsc{CGO} solutions to the following isotropic Beltrami equations, for fixed $k\in\C$,
\begin{equation}\label{eq-u-gamma-beltrami}
\begin{array}{rcl}
\dbarzz f_{\pm\mu_\gamma} (\zeta,k)& = & \pm\mu_\gamma(\zeta)\overline{\dezz f_{\pm\mu_\gamma}(\zeta,k)}, \quad \zeta\in\C\\
f_{\pm\mu_\gamma}(\zeta,k)&=& e^{ik\zeta}\left(1+\O_k\left(\frac{1}{|\zeta|}\right)\right),
\end{array}
\end{equation}
where
\begin{equation}\label{eq-mu-gamma}
\mu_\gamma(\zeta) = \frac{1-\gamma(\zeta)}{1+\gamma(\zeta)},\quad\quad-\mu_\gamma(\zeta) = \frac{1-1/\gamma(\zeta)}{1+1/\gamma(\zeta)},
\end{equation}
are the corresponding Beltrami coefficients for the isotropic conductivity $\gamma(\zeta)=(F_*\sigma)(\zeta)$ and $1/\gamma(\zeta)=(F_*\widehat{\sigma})(\zeta)$, respectively.  Here $\sigma$ denotes the original anisotropic conductivity, and $F:\C\to\C$ the unique diffeomorphism whose \emph{push-forward} of $\sigma(z)$ is the isotropic conductivity $\gamma(\zeta)$, with asymptotic condition as in Lemma~\ref{lem: 1}.

As $\gamma(\zeta)\equiv 1$ for $\zeta\in\C\setminus\widetilde{\Omega}$, we have $\mu_\gamma(\zeta)=0$ and thus $f_{\pm\mu_\gamma}(\zeta,k)$ are analytic in $\zeta$ for $\zeta\in\C\setminus\widetilde{\Omega}$.  Writing
\begin{equation}\label{eq-f-M-decomp}
f_{\pm\mu_\gamma}(\zeta,k)=e^{ik\zeta}M^\pm_\gamma(\zeta,k),
\end{equation}
where 
\begin{equation}\label{eq-f-M-asymptotics}
M^\pm_\gamma(\zeta,k)  = 1+\O_k\left(\frac{1}{|\zeta|}\right),\quad |\zeta|\to\infty,
\end{equation}
we see that $M^\pm_\gamma(\zeta,k)$ are also analytic for $\zeta\in\C\setminus\widetilde{\Omega}$, and thus admit power series representations
\begin{equation}\label{eq-M-expansion}
M^\pm_\gamma(\zeta,k) = \sum_{n=0}^\infty \frac{b^\pm_n(k)}{\zeta^n} 
=  b^\pm_0(k) + \frac{b^\pm_1(k)}{\zeta^1}+\frac{b^\pm_2(k)}{\zeta^2}+\frac{b^\pm_3(k)}{\zeta^3}+\cdots,
\end{equation}
for $|z|>R$ large, and that $b^\pm_0(k)=1$ from the large $|\zeta|$ asymptotics of $M^\pm_\gamma$ given in \eqref{eq-f-M-asymptotics}.  Note that $b^\pm_n(k)$ denote the coefficients in the expansion and depend on the scattering parameter $k\in\C$.

Using this expansion for $M^\pm_\gamma(\zeta,k)$ in the formulation of $f_{\pm\mu_\gamma}(\zeta,k)$, we see
\begin{eqnarray}
f_{\pm\mu_\gamma}(\zeta,k) &=& e^{ik\zeta}\left(1+\sum_{n=1}^\infty \frac{b^\pm_n(k)}{\zeta^n}\right) \label{eq-f-expansion}\\
&=&  e^{ik\zeta}+ \frac{b^\pm_1(k)}{\zeta^1}e^{ik\zeta}+\frac{b^\pm_2(k)}{\zeta^2}e^{ik\zeta}+\frac{b^\pm_3(k)}{\zeta^3}e^{ik\zeta}+\cdots,\nonumber
\end{eqnarray}
for $|\zeta|>R$.  By construction, $u_\gamma(\zeta,k)=\re f_{\mu_\gamma}(\zeta,k)+i\,\im f_{-\mu_\gamma}(\zeta,k)$ is a solution to the isotropic conductivity equation
\begin{equation}\label{eq-isotropic-div-eq}
\nabla\cdot\gamma(\zeta)\nabla u_\gamma(\zeta,k)=0.
\end{equation}

From \cite[(18.34)]{Astala2009}, $u_\gamma$ satisfies the following D-bar equation in the auxiliary variable $k$
\begin{equation}\label{eq-dbark-equation-iso}
\dbark u_\gamma(\zeta,k) = -i\tau(k) \overline{u_\gamma(\zeta,k)}, \quad \zeta\in\C,
\end{equation}
where 
\begin{equation}\label{eq-scat-iso-def}
\tau(k)=\frac{1}{2}\left(\overline{b^+_1(k)}-\overline{b^-_1(k)}\right),
\end{equation}
plays the role of the non-physical scattering data, and $b^\pm_1(k)$ are coefficients in the large $|\zeta|$ expansion \eqref{eq-M-expansion}.

Notice that $\tau(k)$ is independent of the spatial coordinates, only requiring $|\zeta|$ large to compute the coefficients $b^\pm_1(k)$.  This will play an important role in relating the anisotropic and isotropic Beltrami problems.  This scattering data $\tau(k)$, for $L^\infty$ isotropic conductivities, is related to the scattering data $\mathbf{t}(k)$ of \cite{Nachman1996} for $C^2$ smooth conductivities via \cite{Astala2010}
\[\mathbf{t}(k)=-4\pi i\bar{k}\tau(k),\]
which we will use to avoid forming the unstable coordinate deformation map $F$.  Next we define the scattering data $\tau(k)$ in terms of the measured anisotropic boundary data.
\subsubsection{Boundary Integral Equations for the Scattering Data}\label{eq-scat-BIEs}
We now seek to relate the anisotropic and isotropic \textsc{CGO} solutions, in a similar manner as \cite{Astala2005}, in order to evaluate the isotropic Beltrami scattering transform $\tau(k)$ in terms of the anisotropic Beltrami \textsc{CGO}s.  We briefly review the setting here.

We seek \textsc{CGO} solutions $W^+_\sigma(z,k)$ for $k\in\C$ and  $z\in\C\setminus\D$ for the anisotropic ${\sigma}$ problem satisfying
\begin{eqnarray}
\dbarz W^+_\sigma(z,k) &=& 0, \quad \text{for $z\in\C\setminus\D$ }\label{eq-dbarz Wplus-sig}\\
W^+_\sigma(z,k)&=& e^{ikz}\left(1+\O_k\left(\frac{1}{|z|}\right)\right)\label{eq-Wplus-sig-asym}\\
\im \left.W^+_\sigma(z,k)\right|_{z\in\partial\D}&=& \HT_{\sigma}\left(\re \left.W^+_\sigma(z,k)\right|_{z\in\partial\D}
\right).\label{eq-Wplus-sigma-BIE}
\end{eqnarray}
Such solutions correspond to the restriction of the {\sc CGO} solutions $g(z,k)$ of the anisotropic conductivity equation \eqref{eq-aniso-Beltrami} to the exterior domain $\C\setminus\D$, i.e. $W^+_\sigma(\cdot,k)=g(\cdot,k)|_{\C\setminus\D}$.  There $\sigma=I$, the $2\by2$ identity matrix, and thus  $\mu_1(z)=0=\mu_2(z)$ for $|z|\geq 1$ and hence $W^+_\sigma(z,k)$ is harmonic in $\C\setminus\D$.  

For the isotropic conductivity problem, we seek \textsc{CGO} solutions $W^+_\gamma(\zeta,k)$ for $k,\, \zeta\in\C$ satisfying the isotropic Beltrami problem
\begin{eqnarray}
\dbarzz W^+_\gamma(\zeta,k) &=& \mu_\gamma(\zeta)\overline{\dezz W^+_\gamma(\zeta,k)}, \quad \zeta\in\C \label{eq-dbarz Wplus-gam}\\
W^+_\gamma(\zeta,k)&=& e^{ik\zeta}\left(1+\O_k\left(\frac{1}{|\zeta|}\right)\right).\label{eq-Wplus-gam-asym}
\end{eqnarray}

Similarly for the $\widehat{\sigma}$ anisotropic and $1/\gamma$ isotropic problems we desire \textsc{CGO} solutions $W^-_\sigma(z,k)$ and $W^-_\gamma(\zeta,k)$, respectively, such that
\begin{eqnarray}
\dbarz W^-_\sigma(z,k) &=& 0, \quad \text{for $z\in\C\setminus\D$ }\label{eq-dbarz Wminus-sig}\\
W^-_\sigma(z,k)&=& e^{ikz}\left(1+\O_k\left(\frac{1}{|z|}\right)\right)\label{eq-Wminus-sig-asym}\\
\im \left.W^-_\sigma(z,k)\right|_{z\in\partial\D}&=& \HT_{\widehat{\sigma}}\left(\re \left.W^-_{{\sigma}}(z,k)\right|_{z\in\partial\D}
\right),\label{eq-Wminus-sigma-BIE}
\end{eqnarray}
where $W^-_\sigma(z,k)$ is the restriction of $\tilde{g}(z,k)$ from \eqref{eq-aniso-Beltrami-Minusmu2} to the exterior domain $\C\setminus\D$, i.e. $W^-_\sigma(\cdot,k)=g(\cdot,k)|_{\C\setminus\D}$, and
\begin{eqnarray}
\dbarzz W^-_\gamma(\zeta,k) &=& -\mu_\gamma(z)\overline{\dezz W^-_\gamma(\zeta,k)}, \quad \zeta\in\C \label{eq-dbarz Wminus-gam}\\
W^-_\gamma(\zeta,k)&=& e^{ik\zeta}\left(1+\O_k\left(\frac{1}{|\zeta|}\right)\right).\label{eq-Wminus-gam-asym}
\end{eqnarray}

Recall that by the definition of $\mu_\gamma(\zeta,k)=\frac{1-\gamma(\zeta)}{1+\gamma(\zeta)}$, the isotropic \textsc{CGO} solutions $W^\pm_\gamma(\zeta,k)$ are harmonic in $\zeta$ for $\zeta\in\C\setminus\widetilde{\Omega}$.  The \textsc{CGO} solutions to the anisotropic and isotropic problems in the exterior domain are closely related via a change of coordinates.  The following lemma, from \cite[Lemma 3.5]{Astala2005}, clarifies the relation and is stated for $W^+_\sigma$ and $W^+_\gamma$.  The result also holds for $W^-_\sigma$ and $W^-_\gamma$ using the same map $F$.
\begin{lemma}\label{lem-ALP-aniso-iso-CGO-relation}
For all $k\in\C$ we have:
\begin{enumerate}[(i)]
\item The system \eqref{eq-dbarz Wplus-gam}-\eqref{eq-Wplus-gam-asym} has a unique solution $W^+_\gamma(\zeta,k)$ for $\zeta\in\C$.
\item The system \eqref{eq-dbarz Wplus-sig}-\eqref{eq-Wplus-sigma-BIE} has a unique solution $W^+_\sigma(z,k)$ for $z\in\C\setminus\D$.
\item For $z\in\C\setminus\D$, we have
\begin{equation}\label{eq-aniso-equals-iso-exterior}
W^+_\sigma(z,k)=W^+_\gamma(F(z),k),
\end{equation}
where $F:\C\to\C$ denotes the diffeomorphism such that $(F_*\sigma)(z)=\gamma(z)$ described in Lemma~\ref{lem: 1}.
\end{enumerate}
\end{lemma}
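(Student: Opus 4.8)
The plan is to derive all three parts from the isotropic Beltrami theory together with the coordinate change $F$ of Lemma~\ref{lem: 1}. Part (i) is exactly the existence and uniqueness of complex geometrical optics solutions for the isotropic Beltrami equation \eqref{eq-dbarz Wplus-gam} with compactly supported coefficient $\mu_\gamma=(1-\gamma)/(1+\gamma)$ satisfying $\|\mu_\gamma\|_\infty\le\kappa<1$; I would simply invoke the Astala--P\"aiv\"arinta construction \cite{Astala2006a,Astala2006,Astala2009}, and note that by this uniqueness $W^+_\gamma$ coincides with the solution $f_{\mu_\gamma}$ of \eqref{eq-u-gamma-beltrami}.

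For parts (ii) and (iii) the idea is to build the anisotropic exterior solution by pulling the isotropic one back through $F$. First I would set $g(z,k):=W^+_\gamma(F(z),k)$ on all of $\C$ and argue that $g$ is the full--plane anisotropic \textsc{CGO} solution of \eqref{eq-aniso-Beltrami}, so that $g=u+i\widehat u$ with $u$ solving $\nabla\cdot\sigma\nabla u=0$ and $\widehat u$ its $\widehat\sigma$--harmonic conjugate. This is the step I expect to be the main obstacle: it is the change--of--variables rule for Beltrami equations under the quasiconformal map $F$ (with $\dbarz F=\widetilde\mu\,\dez F$), which turns $\dbarzz W^+_\gamma=\mu_\gamma\overline{\dezz W^+_\gamma}$ into \eqref{eq-aniso-Beltrami} with the coefficients $\mu_1,\mu_2$ of \eqref{eq-aniso-Beltrami-coeffs-mu1-mu2}. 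I would take this computation, and the identification of $\re g,\im g$ with the conductivity solution and its conjugate, from \cite[Lemmas 3.2--3.3]{Astala2005}, combined with the elementary real/imaginary splitting of the isotropic Beltrami equation (which shows $\re W^+_\gamma$ solves $\nabla\cdot\gamma\nabla(\cdot)=0$ and $\im W^+_\gamma$ is its $\gamma$--harmonic conjugate in the sense of \eqref{EQ 2}).

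Granting this, I would define $W^+_\sigma:=g|_{\C\setminus\D}$ and verify the three exterior conditions directly. Holomorphy \eqref{eq-dbarz Wplus-sig} holds because $\mu_1=\mu_2=0$ for $|z|\ge1$; equivalently, $W^+_\gamma$ is holomorphic off $\widetilde\Omega=F(\D)$ and $F$ is conformal off $\D$ with $F(\C\setminus\D)=\C\setminus\widetilde\Omega$, so the composition is holomorphic. The asymptotics \eqref{eq-Wplus-sig-asym} follow from $F(z)=z+\O(1/z)$ in \eqref{eq-asympt-withA}, since $e^{ikF(z)}=e^{ikz}(1+\O_k(1/z))$ keeps the leading exponential normalized to $e^{ikz}$; here the precise normalization of $F$ in Lemma~\ref{lem: 1} is exactly what makes the two asymptotic profiles agree. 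Finally, the boundary integral equation \eqref{eq-Wplus-sigma-BIE} is the statement $\widehat u|_{\bndry}=\H_\sigma(u|_{\bndry})$: restricting $g$ to $\D$ exhibits $\re W^+_\sigma|_{\bndry}=u|_{\bndry}$ and $\im W^+_\sigma|_{\bndry}=\widehat u|_{\bndry}$ as the trace of an interior $\sigma$--conductivity solution and of its $\widehat\sigma$--conjugate, so \eqref{eq-Hmu-def-aniso} yields the claim. This simultaneously gives existence in (ii) and the identity \eqref{eq-aniso-equals-iso-exterior} of (iii).

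It remains to establish uniqueness in (ii). Given any solution $W$ of \eqref{eq-dbarz Wplus-sig}--\eqref{eq-Wplus-sigma-BIE}, I would extend it inward: solve $\nabla\cdot\sigma\nabla u_{\mathrm{int}}=0$ in $\D$ with $u_{\mathrm{int}}|_{\bndry}=\re W|_{\bndry}$, take its $\widehat\sigma$--conjugate $\widehat u_{\mathrm{int}}$, and glue $g_W:=u_{\mathrm{int}}+i\widehat u_{\mathrm{int}}$ in $\D$ to $W$ in $\C\setminus\D$. The boundary condition \eqref{eq-Wplus-sigma-BIE} together with \eqref{eq-Hmu-def-aniso} guarantees that the real and imaginary traces match across $\bndry$, so $g_W$ is a full--plane anisotropic \textsc{CGO} solution of \eqref{eq-aniso-Beltrami} with the normalization \eqref{eq-Wplus-sig-asym}. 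Uniqueness of that full--plane solution, established in \cite{Astala2005}, forces $g_W=g$, hence $W=W^+_\sigma$, completing (ii). The argument for $W^-_\sigma,W^-_\gamma$ is identical under $\mu_\gamma\mapsto-\mu_\gamma$ and $\H_\sigma\mapsto\H_{\widehat\sigma}$, using the same map $F$.
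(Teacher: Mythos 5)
The paper offers no proof of this lemma at all: it is imported verbatim as \cite[Lemma 3.5]{Astala2005}, so there is nothing internal to compare your argument against. What you have written is a correct reconstruction of the argument along the lines of that source: part (i) is the Astala--P\"aiv\"arinta existence/uniqueness theorem for the isotropic Beltrami \textsc{CGO} solutions with compactly supported $\mu_\gamma$, $\|\mu_\gamma\|_\infty\le\kappa<1$; existence in (ii) and the identity (iii) come from pulling $W^+_\gamma$ back through $F$ and using the change-of-variables rule for Beltrami equations together with the identification $g=u+i\widehat u$ from \cite[Lemmas 3.2--3.3]{Astala2005}; and uniqueness in (ii) comes from extending a candidate exterior solution inward and invoking uniqueness of the full-plane anisotropic \textsc{CGO} solution. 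Two small points deserve explicit mention if this were written out in full. First, $\H_\sigma$ is valued in $H^{1/2}(\p\D)/\C$, so when you match $\im W|_{\p\D}$ to the trace of the $\widehat\sigma$-conjugate $\widehat u_{\mathrm{int}}$ you must use the free additive constant in the conjugate to make the traces agree exactly, not just modulo constants; the normalization \eqref{eq-Wplus-sig-asym} then pins everything down. Second, the gluing step needs the standard observation that a function which is $W^{1,2}$ on each side of $\p\D$ with matching continuous traces lies in $W^{1,2}_{loc}(\C)$ and satisfies the Beltrami system distributionally across the circle. Neither point is a gap in the idea, only in the write-up.
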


Above we saw that the scattering transform $\tau(k)$ is independent of the spatial coordinates.  We will now use Lemma~\ref{lem-ALP-aniso-iso-CGO-relation} to derive a boundary integral equation (BIE) for the scattering transform $\tau(k)$ in terms of the boundary traces of the anisotropic \textsc{CGO} solutions, thus providing the crucial connection between our anisotropic data in the \emph{physical space}, and the isotropic version in the \emph{virtual space} (isothermal coordinates).

Let $W^\pm_\gamma(\zeta,k) =f_{\pm\mu_\gamma}(\zeta,k)$, for $k,\;\zeta\in\C$.  By Lemma~\ref{lem-ALP-aniso-iso-CGO-relation} part (iii) we have, in the exterior domain, 
\[W^\pm_\sigma(z,k)=W^\pm_\gamma(F(z),k) =f_{\pm\mu_\gamma}(F(z),k), \quad z\in\C\setminus\D.\]
As $F$ is a quasiconformal homeomorphism and diffeomorphism with asymptotics given in \eqref{eq-asympt-withA}, for $|F(z)|>R$ we can use the power series representation for $f_{\pm\mu_\gamma}(z,k)$ from \eqref{eq-f-expansion}  
\begin{eqnarray}
W^\pm_\sigma(z,k) &=&f_{\pm\mu_\gamma}(F(z),k)\nonumber\\
&=&e^{ikF(z)}\left[1+\frac{b^\pm_1(k)}{F(z)}+\frac{b^\pm_2(k)}{\left(F(z)\right)^2}+\cdots \right].\label{eq-W-sig-gam-expansion}
\end{eqnarray}
Using the first-order development of $F(z)$ in Lemma~\ref{lem: 1}, from \eqref{eq-asympt-withA}
\[F(z)=z+\frac{A}{z}+\O\left(\frac{1}{|z|^2}\right),\]
we see  
\begin{eqnarray}
W^\pm_\sigma(z,k) 
&=& e^{ikF(z)}\left[1+\frac{b^\pm_1(k)}{F(z)}+\frac{b^\pm_2(k)}{\left(F(z)\right)^2}+\cdots \right]\nonumber\\
&=& e^{ikz}\left[1+\frac{ikA+b^\pm_1(k)}{z}+\O_k\left(\frac{1}{|z|^2}\right)\right]\nonumber\\
&=& e^{ikz}\left[1+\frac{\tilde{b}^\pm_1(k)}{z}+\O_k\left(\frac{1}{|z|^2}\right)\right],\label{eq-W-sig-gam-expansion-b1tilde}
\end{eqnarray}
where 
\begin{equation}\label{eq-b1tilde}
\tilde{b}^\pm_1(k)=ikA+b^\pm_1(k).
\end{equation}
We can express the scattering transform $\tau(k)$ in terms of the new coefficient $\tilde{b}^\pm_1(k)$ as follows:
\begin{equation}\label{eq-scat-b1tilde}
\tau(k)= \frac{1}{2}\left(\overline{b^+_1(k)}-\overline{b^-_1(k)}\right)
= \frac{1}{2}\left(\overline{\tilde{b}^+_1(k)}-\overline{\tilde{b}^-_1(k)}\right),
\end{equation}
as the constant $A$ cancels. Thus we have related the isotropic Beltrami scattering transform to the anisotropic \textsc{CGO}s.

Next, we will derive boundary integral equations for the coefficients $\tilde{b}^\pm_1(k)$, and thus the isotropic Beltrami scattering transform $\tau(k)$, in terms of the boundary traces of the anisotropic Beltrami \textsc{CGO}s $M^\pm_\sigma$.  Recall that $M^\pm_\sigma(z,k)$ are analytic for $|z|\geq 1$, and from \eqref{eq-W-sig-gam-expansion-b1tilde} we have
\begin{eqnarray}
{M}^\pm_\sigma(z,k)
&=& e^{-ikz}W^\pm_\sigma(z,k)\nonumber\\
&=& e^{-ikz}f_{\pm\mu_\gamma}(F(z),k)\nonumber\\
&=&1+\frac{\tilde{b}^\pm_1(k)}{z}+\O_k\left(\frac{1}{|z|^2}\right), \quad |z|\to\infty\label{eq-Mtilde}.
\end{eqnarray}
Now, let
\begin{eqnarray}
h(z,k) 
&=& \frac{1}{z}\;{M}^\pm_\sigma\left(\frac{1}{z},k\right) -\frac{1}{z}\nonumber\\
&=& \frac{1}{z}\left[1+\tilde{b}^\pm_1(k)z+\O_k\left(|z|^2\right)\right] -\frac{1}{z}\nonumber\\
&=& \tilde{b}^\pm_1(k)+\O_k\left(|z|\right), \quad |z|\to 0\label{eq-h-func}.
\end{eqnarray}

By definition, $h(z,k)$ is analytic in $z$ for $z\in\D\setminus\{0\}$.  In fact, as $h(z,k)$ is bounded, we know that $h(z,k)$ is analytic at $z=0$ as well and thus by setting $h(0,k)= \tilde{b}^\pm_1(k)$, we extend $h(z,k)$ to all of $\D$.  By the Cauchy Integral Formula, the value of $h(z,k)$ for any $|z|<1$ is completely determined by the values at the boundary, and thus
\begin{eqnarray}
\tilde{b}^\pm_1(k)
&=& h(0,k)\nonumber\\
&=& \frac{1}{2\pi i}\int_{\partial\D} \frac{h(z,k)}{z-0}\;dz\nonumber\\
&=& \frac{1}{2\pi i}\int_{\partial\D} \frac{1}{z-0}\left[\frac{1}{z}\;{M}^\pm_\sigma\left(\frac{1}{z},k\right) -\frac{1}{z}\right]\;dz\nonumber\\
&=& \frac{1}{2\pi i}\int_{\partial\D} \frac{1}{z^2}\left[{M}^\pm_\sigma\left(\frac{1}{z},k\right) -1\right]\;dz\nonumber,
\end{eqnarray}
or equivalently, 
\begin{eqnarray}
\tilde{b}^\pm_1(k)
&=& \frac{1}{2\pi i}\int_{\partial\D} \left(M^\pm_\sigma(z,k) - 1\right)\;dz\label{eq-btilde-BIE-M-sigma}.
\end{eqnarray}
Thus, we can determine the scattering transform $\tau(k)$ using the traces of the \textsc{CGO} solutions $M^\pm_\sigma(z,k)$ to the anisotropic problem, in the physical coordinates, resulting in the following theorem which holds for anisotropic $\sigma$, and $\widehat{\sigma} \in L^\infty$.

\begin{theorem}\label{thm-scat}
The Beltrami scattering transform data $\tau(k)$ for the isotropic conductivity $\gamma=F_*\sigma$, where $F$ is as in Lemma~\ref{lem: 1}, can be calculated in terms of the boundary values of the anisotropic $\sigma$ and $\widehat{\sigma}$ Beltrami \textsc{CGO} solutions via
\[\tau(k)=\frac{\overline{\widetilde{b}^+_1(k)}-\overline{\widetilde{b}^+_1(k)}}{2},\]
where 
\[\widetilde{b}^\pm_1(k)=\frac{1}{2\pi i}\int_{\p\D}\left[M^\pm_\sigma(z,k)-1\right]\; dz.\]
\end{theorem}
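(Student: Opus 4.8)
The plan is to assemble the three relations already established in this subsection into a single closed-form identity, so no genuinely new analysis is required beyond chaining them correctly. I would begin from the definition \eqref{eq-scat-iso-def} of the isotropic Beltrami scattering transform, $\tau(k)=\tfrac12(\overline{b^+_1(k)}-\overline{b^-_1(k)})$, in which the coefficients $b^\pm_1(k)$ are the $1/\zeta$ coefficients of the isotropic CGO solutions $f_{\pm\mu_\gamma}$. The first step is to replace these isotropic coefficients by the anisotropic ones $\widetilde{b}^\pm_1(k)$. This is exactly what \eqref{eq-b1tilde} and \eqref{eq-scat-b1tilde} provide: composing $f_{\pm\mu_\gamma}$ with $F$ and using the asymptotic expansion $F(z)=z+A/z+\O(1/z^2)$ from Lemma~\ref{lem: 1} shifts the $1/z$ coefficient by the common term $ikA$, so that $\widetilde{b}^\pm_1=ikA+b^\pm_1$; since $A$ is independent of the sign $\pm$, it cancels in the difference and $\tau(k)=\tfrac12(\overline{\widetilde{b}^+_1(k)}-\overline{\widetilde{b}^-_1(k)})$.

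The second, and more substantive, step is to express each $\widetilde{b}^\pm_1(k)$ as a boundary integral of the anisotropic trace $M^\pm_\sigma(z,k)$. Here I would invoke Lemma~\ref{lem-ALP-aniso-iso-CGO-relation}(iii), which identifies $W^\pm_\sigma(z,k)=W^\pm_\gamma(F(z),k)$ in the exterior $\C\setminus\D$, so that $M^\pm_\sigma(z,k)=e^{-ikz}W^\pm_\sigma(z,k)$ is analytic for $|z|\ge 1$ with the asymptotics \eqref{eq-Mtilde}; that is, $\widetilde{b}^\pm_1(k)$ is precisely its leading $1/z$ coefficient at infinity. The key device is the inversion $z\mapsto 1/z$, which turns exterior analyticity into interior analyticity: the function $h(z,k)=z^{-1}M^\pm_\sigma(1/z,k)-z^{-1}$ of \eqref{eq-h-func} is bounded near the origin, hence has a removable singularity there with $h(0,k)=\widetilde{b}^\pm_1(k)$. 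Applying the Cauchy integral formula to $h$ on $\D$ and then undoing the substitution yields $\widetilde{b}^\pm_1(k)=\tfrac{1}{2\pi i}\int_{\partial\D}(M^\pm_\sigma(z,k)-1)\,dz$, which is \eqref{eq-btilde-BIE-M-sigma}.

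The main obstacle is not the final contour computation, which is routine once analyticity is in place, but rather securing and exploiting the analyticity of $M^\pm_\sigma$ in the exterior domain. This rests on two facts that must be cited carefully: that $\sigma\equiv I$ (and hence $\mu_1=\mu_2=0$) outside $\D$, which makes the anisotropic CGO harmonic there, and that the coordinate change $F$ is conformal outside $\D$ because $\widetilde{\mu}$ is compactly supported, so that stripping the exponential factor from $f_{\pm\mu_\gamma}(F(z),k)$ leaves a function analytic in $z$. Once these are established, identifying $\widetilde{b}^\pm_1(k)$ as a genuine Laurent coefficient and extracting it by Cauchy's formula is immediate, and combining with the first step completes the proof. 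I would close by emphasizing that the cancellation of $A$ is exactly what renders the result independent of the unknown asymptotic constant of Lemma~\ref{lem: 1}, and hence computable from the anisotropic boundary traces alone.
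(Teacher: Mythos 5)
Your proposal is correct and follows essentially the same route as the paper: the paper's proof of this theorem simply cites the identities \eqref{eq-scat-b1tilde} and \eqref{eq-btilde-BIE-M-sigma}, which are established in the preceding text by exactly the two steps you describe (the $ikA$ shift from the asymptotics of $F$ in Lemma~\ref{lem: 1} cancelling in the difference, and the Cauchy integral formula applied after the inversion $z\mapsto 1/z$ to the function $h$ of \eqref{eq-h-func}). Your added emphasis on why $M^\pm_\sigma$ is analytic in the exterior is a welcome clarification but not a departure from the paper's argument.
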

\begin{proof}
The proof follows directly from \eqref{eq-scat-b1tilde} and \eqref{eq-btilde-BIE-M-sigma}.
\end{proof}

\subsubsection{From Scattering Data to Isotropic Conductivity $\gamma=\sqrt{\det\sigma}$}\label{sec-scat-to-Gam}
As the scattering transform $\tau(k)$ is independent of the spatial coordinates, we can now proceed with the established theory for the isotropic conductivity problem.  Following along the $L^\infty$ approach of \cite{Astala2006a} would require knowledge of the isotropic \textsc{CGO} solutions $M^\pm_\gamma$ in the exterior domain $\C\setminus\tilde{\Omega}$ and thus the explicit formation of the unstable map $F$ at the boundary $\p\widetilde{\Omega}=F(\bndry)$ via \cite{Astala2005}  
\begin{equation}\label{eq-Fz-lim}
F(z)=\underset{k\to\infty}{\lim}\frac{\log W^+_\sigma(z,k)}{ik},\quad z\in\C\setminus\Omega.
\end{equation}
Note that the logarithm is not necessarily the principal branch.  For any $k\not=0$ we consider the logarithm of  $G(z,k)=W^+_\sigma(F(z),k)$ where the  branch is chosen so that $\log G(z,k)$ is a continuous function of $z\in\C\setminus\Omega$ and $\lim_{z\to \infty} (\log G(z,k)-ikz)=0$. Then one has $\lim_{k\to \infty} \frac{\log G(z,k)}{ik}=F(z)$.  Note that such a choice of the branch of the logarithm is not numerically feasible as one would need to compute values of $G(z,k)$ for both large $z$ and $k$, when we only have the boundary traces for small magnitude $k$. 

Instead, we bypass constructing the map $F$ completely by relating the $L^\infty$ isotropic Beltrami scattering transform $\tau(k)$ to the $C^2$ isotropic Schr\"odinger scattering transform $\mathbf{t}(k)$ via \cite{Astala2010}
\begin{equation}\label{eq-relate-AP-Nach-scattering}
\mathbf{t}(k)=-4\pi i\bar{k} \tau(k),
\end{equation}
and continue by solving the $C^2$ isotropic conductivity problem \cite{Nachman1996}.  Therefore, for each fixed $\zeta\in\widetilde{\Omega}$, we solve the following $\dbark$ equation 
\begin{equation}\label{eq-dbark-C2-1st} 
\dbark \mathcal{M}(\zeta,k)= \frac{1}{4\pi\bar{k}}\mathbf{t}(k) e(\zeta,-k)\overline{\mathcal{M}(\zeta,k)},
\end{equation}
for $\mathcal{M}(\zeta,k)$, $k\in\C$, where $e(\zeta,k):=\exp(i(k\zeta+\overline{k}\overline{\zeta}))$ is a unimodular multiplier.  Here  $\mathcal{M}(\zeta,k)$ are the \textsc{CGO} solutions of Nachman \cite{Nachman1996}, and  $\mathcal{M}(\zeta,k)-1 \in W^{1,\tilde{p}}(\widetilde{\Omega})$ for $\tilde{p}>2$.  The isotropic conductivity $\gamma$ is then recovered from the low frequency \textsc{CGO}s via 
\begin{equation}\label{eq-M-to-GAM}
\gamma(\zeta)=\mathcal{M}(\zeta,0)^2.
\end{equation}

As the $C^2$ isotropic case is well studied in the theoretical and numerical settings, we see that once we obtain the scattering data $\tau(k)$, we can recover the isotropic representation $\gamma(\zeta)=(F_* \sigma)(\zeta)$ of the anisotropic conductivity $\sigma$.  What remains to be shown now is a way to determine the anisotropic traces of $M^\pm_\sigma$ for $z\in\p\D$, which we will now do. 

\subsection{Determining the traces of the anisotropic \textsc{CGO} Solutions}\label{sec-CGOtraces}
Recall, from Section~\ref{eq-scat-BIEs}, that for fixed $k\in\C$, $W^+_\sigma(z,k)$ denotes the restriction of the anisotropic $\sigma$  {\sc CGO} solution $g(z,k)$ of \eqref{eq-aniso-Beltrami} to the exterior domain $\C\setminus\D$, and satisfies \eqref{eq-dbarz Wplus-sig}-\eqref{eq-Wplus-sigma-BIE} 
\begin{eqnarray*}
\dbarz W^+_\sigma(z,k) &=& 0, \quad \text{for $z\in\C\setminus\D$ }\\
W^+_\sigma(z,k)&=& e^{ikz}\left(1+\O_k\left(\frac{1}{|z|}\right)\right)\\
\im \left.W^+_\sigma(z,k)\right|_{z\in\partial\D}&=& \HT_{\sigma}\left(\re \left.W^+_\sigma(z,k)\right|_{z\in\partial\D}
\right).
\end{eqnarray*}
Similarly, for fixed $k\in\C$, $W^-_\sigma(z,k)$ denotes the restriction of the anisotropic $\widehat{\sigma}$ {\sc CGO} solution $\tilde{g}(z,k)$ of \eqref{eq-aniso-Beltrami-Minusmu2} to the exterior domain $z\in\C\setminus\D$, and satisfies \eqref{eq-dbarz Wminus-sig}-\eqref{eq-Wminus-sigma-BIE}
\begin{eqnarray*}
\dbarz W^-_\sigma(z,k) &=& 0, \quad \text{for $z\in\C\setminus\D$ }\\
W^-_\sigma(z,k)&=& e^{ikz}\left(1+\O_k\left(\frac{1}{|z|}\right)\right)\\
\im \left.W^-_\sigma(z,k)\right|_{z\in\partial\D}&=& \HT_{\widehat{\sigma}}\left(\re \left.W^-_{{\sigma}}(z,k)\right|_{z\in\partial\D}
\right).
\end{eqnarray*}
We will use the boundary relations involving the $\sigma$ and $\widehat{\sigma}$-Hilbert transforms $\H_\sigma$ and $\H_{\widehat{\sigma}}$ to derive the boundary integral equations for $M^\pm_\sigma(z,k)=e^{-ikz}W^\pm_\sigma(z,k)$ for $z\in\p\D$.  

Let us first consider the $W^+_\sigma$ case.  Write $g=W^+_\sigma=v+iw$ where $v=\re\left(W^+_\sigma\right)$ and $w=\im\left(W^+_\sigma\right)$.  Then, 
\[\nabla\cdot\sigma\nabla v(z,k)=0,\qquad \nabla\cdot\widehat{\sigma}\nabla w(z,k)=0.\]
By the definition of $\H_\sigma$ in \eqref{eq-Hmu-def-aniso}, $\H_\sigma\left(v|_{\bndry}\right)=w|_{\bndry}$.

Currently the map $\H_\sigma$ is only defined for real-valued functions.  To determine its action on a purely imaginary function now consider $h=-ig=w-iv$.  As $g$ satisfies $\eqref{eq-aniso-Beltrami}$ we see that $h$ satisfies 
\[\dbarz h(z,k)=\mu_1(z)\dez h(z,k)-\mu_2(z)\overline{\dez h(z,k)},\quad z,k\in\C,\]
which turns out to be the Beltrami equation corresponding to the anisotropic conductivity $\widehat{\sigma}$.  Thus $\H_{\widehat{\sigma}}(w)=-v$.  As $w=\H_\sigma(v)=\re(h)$ it is natural to set
\begin{equation}\label{eq-H-sig-sigHat-neg-relation}
\H_{\widehat{\sigma}}(w)=i\H_\sigma(iw),
\end{equation} 
therefore extending the definition of $\H_\sigma$ to all complex-valued functions $g\in H^{1/2}(\bndry)$.  Moreover, we have 
\begin{equation}\label{eq-Hsig-HsigMat-composition}
{\H_\sigma\circ\H_{\widehat{\sigma}}(g)=\H_{\widehat{\sigma}}\circ\H_{{\sigma}}(g)
=-g+\frac{1}{2\pi}\int_{\p\D}g\;dS.}
\end{equation}
For more details (in terms of the isotropic problem), see \cite[Section 16.3]{Mueller2012}.

If $\sigma=I_{2\by2}$, then $\mu_1=0=\mu_2$ and the problem reduces to the isotropic $\gamma=1$ (and $\mu=0$) case.  We briefly review the setting here. The map resulting from $\sigma=I_{2\by2}$ or equivalently $\gamma=1$ is $\H_0$, the standard Hilbert transform on the unit circle.  $\H_0$ is a singular operator with Fourier multiplier $m(\xi)=-i\xi/|\xi|$ defined for $\xi\in\Z\setminus{0}$ with $m(0)=0$ such that
\[\left\{\H_0 g(\cdot)\right\}^\wedge(\xi)=m(\xi)\left\{g(\cdot)\right\}^\wedge(\xi)\quad \text{for $g\in L^2(\p\D)$}.\]
As in \cite{Astala2006}, we will make use of the projection $\mathcal{P}_0$ of $\H_0$ in the boundary integral equations for the \textsc{CGO}s.  The Riesz projection  $\mathcal{P}_0$ onto the Hardy spaces on $\p\D$ is determined by $\H_0$ via
\begin{equation}\label{eq-P0}
\mathcal{P}_0g=\frac{1}{2}\left(I+i\H_0\right)g+\frac{1}{2}\Ave g, 
\end{equation}
where $\Ave$ is an averaging operator 
\begin{equation}\label{eq-ave-operator}
  \Ave\phi:=|\partial\Om|^{-1}\int_{\partial\Om}\phi \,dS.
\end{equation}
In the same vain as the isotropic case, we define the projection $\mathcal{P}_\sigma$ 
\begin{eqnarray}
\mathcal{P}_\sigma g=\frac{1}{2}\left(I+i\H_\sigma\right)g+\frac{1}{2}\Ave g,\quad \text{$g\in H^{1/2}(\p\D)$.} \label{eq-Psig}
\end{eqnarray}
Conjugating with the exponential function yields the $k$ dependent operator defined by 
\begin{eqnarray}
\mathcal{P}^k_{{\sigma}}(g)(z)=e^{-ikz}\mathcal{P}_{{\sigma}}\left(e^{ik\cdot}g\right)(z),\quad \text{$g\in H^{1/2}(\p\D)$.}\label{eq-Pk-sig}
\end{eqnarray}

\begin{theorem}\label{thm-CGOtraces}
For each fixed $k\in\C$, the boundary traces of the \textsc{CGO} solutions $M^\pm_\sigma(z,k)$ on $\p\D$ for the anisotropic $\sigma$ and $\widehat{\sigma}$ Beltrami problems \eqref{eq-aniso-Beltrami} and \eqref{eq-aniso-Beltrami-Minusmu2} are the unique solutions to
\begin{eqnarray}
M^+_\sigma(z,k)+1&=&\left(\mathcal{P}_0+\mathcal{P}^k_\sigma\right) M^+_\sigma(z,k),\quad z\in\p\D\label{eq-Mplus-sig-BIE}\\
M^-_\sigma(z,k)+1&=&\left(\mathcal{P}_0+\mathcal{P}^k_{\widehat{\sigma}}\right) M^-_\sigma(z,k),\quad z\in\p\D\label{eq-Mminus-sig-BIE}.
\end{eqnarray}
\end{theorem}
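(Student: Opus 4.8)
The plan is to read off each boundary integral equation from the two defining features of the exterior \textsc{CGO} solution $W^\pm_\sigma$ — namely its analyticity together with the normalization at infinity, and its $\H_\sigma$- (resp.\ $\H_{\widehat\sigma}$-) Hilbert boundary condition — each of which I would convert into a single ``projection identity'' on $\p\D$. Adding the two identities produces exactly \eqref{eq-Mplus-sig-BIE} (resp.\ \eqref{eq-Mminus-sig-BIE}), after which a separate argument gives uniqueness. I would carry out the $W^+_\sigma$ case in full and note that the $W^-_\sigma$ case is verbatim with $\H_\sigma,\mathcal{P}^k_\sigma$ replaced by $\H_{\widehat\sigma},\mathcal{P}^k_{\widehat\sigma}$.

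First I would exploit exterior analyticity. Since $\dbarz W^+_\sigma=0$ on $\C\setminus\D$ and $W^+_\sigma=e^{ikz}(1+\O_k(1/|z|))$, the function $M^+_\sigma=e^{-ikz}W^+_\sigma$ is analytic for $|z|\ge 1$ with $M^+_\sigma\to 1$ at infinity, so on $\p\D$ it has the Fourier expansion $M^+_\sigma=1+\sum_{n\ge 1}\widetilde b^+_n(k)\,z^{-n}$, i.e. a constant plus strictly negative frequencies. As the Riesz projection $\mathcal{P}_0$ of \eqref{eq-P0} fixes constants and annihilates negative frequencies, this yields the first identity $\mathcal{P}_0 M^+_\sigma=1$ on $\p\D$. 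Next I would use the boundary condition \eqref{eq-Wplus-sigma-BIE}: writing $W^+_\sigma=v+iw$ with $w=\H_\sigma v$, its trace is the boundary value of an interior $\sigma$-solution (complexified as in \eqref{eq-Hmu-def-aniso}), and such traces lie in the range of the projection $\mathcal{P}_\sigma$ of \eqref{eq-Psig}; hence $\mathcal{P}_\sigma W^+_\sigma=W^+_\sigma$. Conjugating by the exponential via \eqref{eq-Pk-sig} and using $e^{ikz}M^+_\sigma=W^+_\sigma$ converts this into the second identity
\[\mathcal{P}^k_\sigma M^+_\sigma=e^{-ikz}\mathcal{P}_\sigma\!\left(e^{ik\cdot}M^+_\sigma\right)=e^{-ikz}\mathcal{P}_\sigma W^+_\sigma=M^+_\sigma.\]
Adding the two identities gives $(\mathcal{P}_0+\mathcal{P}^k_\sigma)M^+_\sigma=M^+_\sigma+1$, which is precisely \eqref{eq-Mplus-sig-BIE}.

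For uniqueness I would note that the integral equation is equivalent to the exterior boundary value problem \eqref{eq-dbarz Wplus-sig}--\eqref{eq-Wplus-sigma-BIE}: the computation above shows the trace of $M^+_\sigma$ solves it, and conversely any solution of \eqref{eq-Mplus-sig-BIE} reconstructs, through the two identities and the Cauchy integral, a function analytic in $\C\setminus\overline{\D}$ carrying exactly the normalization and Hilbert boundary condition that define $W^+_\sigma$. Since Lemma~\ref{lem-ALP-aniso-iso-CGO-relation}(ii) asserts that this exterior problem has a \emph{unique} solution, the boundary integral equation has at most one solution. Solvability of the integral equation itself follows from the Fredholm alternative: as in \cite{Astala2006,Astala2006a}, and using $|\widetilde\mu|\le\kappa<1$, the difference $\H_\sigma-\H_0$ is compact on $H^{1/2}(\p\D)$, so $I-\mathcal{P}_0-\mathcal{P}^k_\sigma$ is a compact perturbation of the invertible model operator $I-\mathcal{P}_0-\mathcal{P}^k_0$ (the isotropic $\sigma=I$ case), and the injectivity just established then upgrades to invertibility.

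The main obstacle is the second identity. Because $\H_\sigma$ is defined only modulo constants (see \eqref{eq-H-sig}) and is merely \emph{real}-linear, the operators $\mathcal{P}_\sigma$ and $\mathcal{P}^k_\sigma$ are real-linear, and the averaging terms $\tfrac12\Ave$ exist precisely to render them genuine idempotents and to pin down the otherwise-ambiguous additive constant. Verifying cleanly that $\mathcal{P}_\sigma$ reproduces the traces of $\sigma$-solutions — and that the compactness and injectivity needed for the Fredholm step survive this real-linear, modulo-constant bookkeeping — is the delicate part; the exterior-analyticity identity and the final addition are then routine.
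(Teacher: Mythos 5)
Your proposal is correct and follows essentially the same route as the paper, whose own proof is a one-line deferral to the isotropic argument of Astala--P\"aiv\"arinta and \cite[Section 16.3]{Mueller2012}: you simply write out the two projection identities ($\mathcal{P}_0 M^\pm_\sigma=1$ from exterior analyticity, $\mathcal{P}^k_\sigma M^\pm_\sigma=M^\pm_\sigma$ from the $\H_\sigma$-boundary condition) and the Fredholm/uniqueness step that those references contain, with $\H_\sigma$ in place of $\H_\mu$. You also correctly isolate the one delicate point --- the real-linear, modulo-constants extension of $\H_\sigma$ and the role of the $\tfrac12\Ave$ terms --- which is exactly where the bookkeeping (including the sign convention in \eqref{eq-H-sig-sigHat-neg-relation}) must be checked for the identity $\mathcal{P}_\sigma W^\pm_\sigma=W^\pm_\sigma$ to hold.
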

\begin{proof}
The $\sigma$ and $\widehat{\sigma}$ proofs are analogous to the isotropic $\gamma$ and $1/\gamma$ cases with the underlying isotropic Beltrami equations \eqref{eq-u-gamma-beltrami} replaced with the their corresponding anisotropic versions \eqref{eq-aniso-Beltrami} and \eqref{eq-aniso-Beltrami-Minusmu2}, and the Hilbert transform  $\H_\sigma$ extended to all complex-valued functions $g\in H^{1/2}(\bndry)$ as in 
\eqref{eq-H-sig-sigHat-neg-relation}.  See \cite{Astala2006a} and \cite[Section 16.3]{Mueller2012} for more details. 
\end{proof}

\section{A Direct Nonlinear D-bar Algorithm}\label{sec-algorithm}
The constructive proof described above in Section~\ref{sec-CGOproof} corresponds to a direct nonlinear D-bar algorithm which we summarize here.  The reconstruction method from infinite-precision data consists of the following steps:
\[\Lambda_\sigma \overset{1}{\longrightarrow} \left.M^\pm_\sigma(z,k) \right|_{\bndry} \overset{2}{\longrightarrow} \tilde{b}_1^\pm(k)\overset{3}{\longrightarrow}\mathbf{t}(k)
\overset{4}{\longrightarrow}\gamma.\]
\begin{itemize}
\item[{\bf Step 1:}] {\bf From boundary measurements $\Lambda_\sigma$ to traces of the anisotropic \textsc{CGO} solutions $M^\pm_\sigma$:}\\ For each fixed $k\in\C$, solve the following boundary integral equations for $M^\pm_\sigma$
\begin{eqnarray}
M^+_\sigma(\cdot,k)|_{\bndry} +1 &=& \left(\mathcal{P}^k_{\sigma}+\mathcal{P}_0\right)M^+_\sigma(\cdot,k)|_{\bndry} \label{eq-M-aniso-traces-Mplus}\\
M^-_\sigma(\cdot,k)|_{\bndry} +1 &=& \left(\mathcal{P}^k_{\widehat{\sigma}}+\mathcal{P}_0\right)M^-_\sigma(\cdot,k)|_{\bndry} \label{eq-M-aniso-traces-Mminus}
\end{eqnarray}
where $\mathcal{P}^k_{\sigma}$,  $\mathcal{P}^k_{\widehat{\sigma}}$, and $\mathcal{P}_0$ are the projection operators explained in detail in Section~\ref{sec-CGOtraces}.

\vspace{1em}
\item[{\bf Step 2:}] {\bf From the anisotropic \textsc{CGO} traces $M^\pm_\sigma$ to the isotropic Beltrami scattering data $\tilde{b_1^{\pm}}(k)$:} \\ Substitute the anisotropic traces of 
$M^\pm_\sigma$ into the formula for the nonlinear isotropic Beltrami scattering data
\begin{equation}\label{eq-btilde-intEq}
\tilde{b_1^{\pm}}(k) = \frac{1}{2\pi i}\int_{\bndry}\left[M^\pm_\sigma(z,k) -1\right]\;dz.
\end{equation}

\vspace{1em}
\item[{\bf Step 3:}]  {\bf From Beltrami Scattering data $\tilde{b_1^{\pm}}(k)$ to Schr\"{o}dinger Scattering data $\mathbf{t}(k)$:}\\
For each $k$, evaluate 
\begin{equation}\label{eq-t-def}
\mathbf{t}(k) = -4\pi i \bar{k}\,\tau(k),
\end{equation}
where
\begin{equation}\label{eq-tau-def}
\tau(k) = \frac{\overline{\tilde{b}_1^+(k)} - \overline{\tilde{b}_1^-(k)}}{2}.
\end{equation}

\vspace{1em}
\item[\textbf{Step 4:}] \textbf{From the isotropic scattering data to the isotropic conductivity:} \\ For each fixed $\zeta\in \tilde{\Om}$, solve the $\dbark$-equation
    \begin{align}\label{eq-dbark-C2}
     \dbark \mathcal{M}(\zeta,k)= \frac{1}{4\pi\bar{k}}\mathbf{t}(k) e(\zeta,-k)\overline{\mathcal{M}(\zeta,k)},
    \end{align}
where $\mathcal{M}\sim1$ for $|\zeta|\to\infty$.
  The $C^2$ representative isotropic conductivity is then recovered by $\gamma(\zeta)=\mathcal{M}(\zeta,0)^2$ up to a change of coordinates. Note that as $\gamma$ is the isotropic representation, we have $\gamma(\zeta) =\left(F_*\sigma\right)(\zeta)= \sqrt{\det\left(\sigma\left(F^{-1}(\zeta)\right)\right)}$.
\end{itemize}

\subsection{Numerical Setup}\label{sec-algorithm-setup}
In Step~4 of the algorithm, we need to solve the $\dbark$ equation 
\[\dbark \mathcal{M}(\zeta,k)= \frac{1}{4\pi\bar{k}}\mathbf{t}(k) e(\zeta,-k)\overline{\mathcal{M}(\zeta,k)},\]
from Nachman's constructive proof for $C^2$ isotropic conductivities \cite{Nachman1996}.   To solve the $\dbark$ equation numerically, we need to truncate the scattering data.  In \cite{Knudsen2009}, it was shown that truncation of the scattering data $\mathbf{t}(k)$ corresponds to a regularization strategy.  In this spirit, fix $R>0$, a positive radius for the $k$-parameter, and define the truncated scattering transform
\begin{equation}\label{eq-dbark-C2-truncated}
 \mathbf{t}^R(k)=\begin{cases}
 \mathbf{t}(k) & |k|\leq R\\
 0 & |k|>R.
 \end{cases}
\end{equation}
To solve the $\dbark$ equation, we use the numerical solution method of Knudsen, Mueller and Siltanen~\cite{Knudsen2004a}, which uses Fourier transforms, and thus we work on the same special $k$-grid, with $k\in[-R,R]^2$, defined as follows.  

Choose $N_k=2^c$, the number of discretization points in the $k_i$ direction ($i=1,2$), for some positive integer $c$.  Set the stepsize in $k$ to be $h_k=\frac{2R}{N_k}$, and form the computational $k$-grid $\mathcal{G}_c$ by
\begin{equation}\label{eq-kGrid-Gc}
\mathcal{G}_c=\left\{\mathbf{j} h_k\;\middle|\; \mathbf{j}\in\Z^2_c\right\},
\end{equation}
where 
\begin{equation}\label{eq-kGrid-Z2c}
\Z^2_c=\left\{\mathbf{j}=\left(j_1,j_2\right)\in\Z\by\Z\;\middle|\; -2^{c-1}\leq j_i<2^{c-1} \quad i=1,2\right\}.
\end{equation}
Note that there are $N_k^2=2^{2c}$ points in the computational $k$-grid $\mathcal{G}_c$.  Furthermore, the explicit construction in \eqref{eq-kGrid-Gc}-\eqref{eq-kGrid-Z2c} (which excludes the $j_i=2^{c-1}$ entries for $i=1,2$) is essential for the periodic solution technique used in the solution of the $\dbark$ equation.

We now will describe the numerical implementation of each step of the algorithm.
\subsection{Step 1: Solving the Anisotropic Boundary Integral Equations}\label{sec-algorithm-BIE}
The numerical solution of the boundary integral equations \eqref{eq-M-aniso-traces-Mplus} and \eqref{eq-M-aniso-traces-Mminus} is done by writing the real and imaginary parts separately, replacing all the operators by matrix approximations, and solving the resulting finite linear system.  As the boundary integral equations are nearly identical to their isotropic counterparts \cite{Astala2006} they can be solved in the same manner as \cite[Section 16.3]{Mueller2012}.  We review the process here for $M^+_\sigma$.  The process is analogous for $M^-_\sigma$.

We will consider the trigonometric basis functions 
\begin{equation} \label{Trig}
\phi_n(\theta)= 
\left \{ \begin{array}{lc}
\displaystyle \pi^{-1/2}\cos\left((n+1)\theta/2\right), \quad \mbox{ for odd }n, \\
\displaystyle \pi^{-1/2}\sin\left(n\theta/2\right), \quad \mbox{ for even } n.
\end{array}\right. 
\end{equation}
The Dirichlet-to-Neumann map $\Lambda_\sigma$ is approximately represented by the matrix $\mathbf{L}_\sigma$ defined by
\begin{equation}\label{eq-DN-mat}
(\mathbf{L}_\sigma)_{m,n} := \langle \Lambda_\sigma\phi_n,\phi_m\rangle=\int_{\bndry}\left(\Lambda_\sigma\phi_n\right)\overline{\phi_m}.
\end{equation}
The tangential derivative map\index{tangential derivative map} $\dd_T$ can be approximated in the basis \eqref{Trig} by the  matrix $\mathbf{D}_T$:
\begin{equation}\label{dermatrix}
  \mathbf{D}_T =  \left[\begin{array}{rrrrrrrr}
  {0} &   {1}  &  \phantom{-} &  \phantom{-}&    &      &        & \\
   {-1}  & {0} &  \, & \,  &    &      &        & \\
    \,    & \,    &{0}  & {2}  &    &      &        & \\
     \,   &  \,   &  {-2} & {0}  &    &      &        & \\
              &     &   &   &    & \ddots &        & \\
              &     &   &   &    &      & {0} & {N}\\
              &     &   &   &    &      &    {-N}    & {0}
      \end{array}
  \right].
\end{equation}
We approximate $\Hil_\sigma$ acting on real-valued, zero-mean functions expanded in the basis \eqref{Trig} by 
\begin{equation}\label{matriisihassakka}
  \widetilde{\mathbf{H}}_{\sigma} := \mathbf{D}_T^{-1}  \,\mathbf{L}_\sigma.
\end{equation}

In general, the traces of the \textsc{CGO} solutions at $\p\D$ do not have mean zero, and so we append the basis function $\phi_0=(2\pi)^{-1/2}$ to \eqref{Trig}.
This leads to the following $(2N+1)\times(2N+1)$ matrix approximation to the $\sigma$-Hilbert transform $\Hil_\sigma$:
\begin{equation}\label{H_mu_matrix}
  \mathbf{H}_{\sigma} :=\left[\!\begin{array}{lc}
  0_{1\by 1} & 0_{1\by 2N}\\
  0_{2N\by 1} & \widetilde{\mathbf{H}}_\sigma
  \end{array}\!\right].
\end{equation}
Furthermore, we can approximate $\Hil_{\widehat{\sigma}}$ when we have $\mathbf{H}_{\sigma}$ available. We have the identity
\begin{equation}\label{Hminusident}
  \Hil_{\widehat{\sigma}}\circ(-\Hil_{\widehat{\sigma}})u =   (-\Hil_{\widehat{\sigma}})\circ \Hil_\sigma u = u -\Ave u,
\end{equation}
so $-\Hil_{\widehat{\sigma}}$ is the inverse operator of $\Hil_\sigma$ in the subspace of zero-mean functions. Thus we may define a $(2N+1)\times(2N+1)$ matrix approximation to  $\Hil_{\widehat{\sigma}}$:
\begin{equation}\label{H_mmu_matrix}
  \mathbf{H}_{\widehat{\sigma}} :=\left[\!\begin{array}{lc}
  0_{1\by 1} & 0_{1\by 2N}\\
  0_{2N\by 1} & -\widetilde{\mathbf{H}}_\sigma^{-1}
  \end{array}\!\right].
\end{equation}

We represent complex-valued functions $g\in H^{1/2}(\DOm)$ by expanding the real and imaginary parts separately and organizing the transform coefficients as the following vertical vector in $\R^{4N+2}$ :
$$
 \widehat{g} = \left[\!
\begin{array}{c}
\langle \mbox{Re\,}g,\phi_0\rangle \\
\langle \mbox{Re\,}g,\phi_1\rangle\\
\vdots\\
\langle \mbox{Re\,}g,\phi_{2N}\rangle\\
\langle \mbox{Im\,}g,\phi_0\rangle\\
\langle \mbox{Im\,}g,\phi_1\rangle\\
\vdots\\
\langle \mbox{Im\,}g,\phi_{2N}\rangle
\end{array}\!\right]\in \R^{4N+2}.
$$
Denote the direct and inverse transform by $\widehat{g}=\widetilde{\mathcal{F}}g$ and $g=\widetilde{\mathcal{F}}^{-1}\widehat{g}$. Note that the $(4N+2)\times(4N+2)$ matrix representation of the averaging operator $\mathcal{L}$ is given by 
$$
  \mathbf{L}=\frac{1}{\sqrt{2\pi}}\mbox{diag}[1,0,\dots,0,1,0,\dots,0],
$$
where the ones are located at elements $(1,1)$ and $(2N+2,2N+2)$.

Now we can solve the boundary integral equation \eqref{eq-M-aniso-traces-Mplus} approximately by solving the following equation for the transform coefficients of $M^+_\sigma$:
\begin{equation}\label{AP_BIEmatrix}
  (I-\mathbf{P}_\sigma^k-\mathbf{P}_0)\widehat{M}^+_\sigma(\,\cdot\,,k)|_{\partial\D} = -\widetilde{\mathcal{F}}(1). 
\end{equation}
Here $\mathbf{P}_\sigma^k$ and $\mathbf{P}_0$ stand for approximate implementations of the actions of the operators ${\mathcal P}_\sigma^k$ and ${\mathcal P}_0$, respectively. The action of ${\mathcal P}_\sigma$ (defined in \eqref{eq-Psig}) in the transform domain is  
\[ \mathbf{P}_\sigma \,\widehat{g} = \frac{1}{2}\left(I + i\left[\!\begin{array}{cc}
  \mathbf{H}_{\sigma} & 0\\
  0 & \mathbf{H}_{\widehat{\sigma}} 
  \end{array}\!\right] \right) \widehat{g} +  \frac{1}{2}\mathbf{L} \,\widehat{g},\]
where $\mathbf{H}_{\sigma}$ and $\mathbf{H}_{\widehat{\sigma}}$ are given by \eqref{H_mu_matrix} and \eqref{H_mmu_matrix}, respectively. The action of ${\mathcal P}_\sigma^k$ (defined in \eqref{eq-Pk-sig}) in the transform domain is  
\begin{equation}\label{Pmu_kaction}
  \mathbf{P}_\sigma^k\,\widehat{g}= \widetilde{\mathcal{F}}\left(e^{-ikz} \cdot \widetilde{\mathcal{F}}^{-1}\left({\mathbf P}_\sigma\, \widetilde{\mathcal{F}}\big(e^{ikz}\cdot (\widetilde{\mathcal{F}}^{-1}\widehat{g})\big)\right)\right).
\end{equation}
Now one can solve equation \eqref{AP_BIEmatrix} for the transform coefficients $\widehat{M}^+_\sigma$ iteratively using {\sc GMRES}.

\subsection{Step 2: Computing the Beltrami Scattering Data}
For the given value $k\in\C$, we compute  $\tilde{b}_1^{\pm}(k)$ from \eqref{eq-btilde-intEq}
\[\tilde{b}_1^{\pm}(k) = \frac{1}{2\pi i}\int_{\bndry}\left[M^\pm_\sigma(z,k) -1\right]\;dz.\]
Letting $z=e^{i\theta}\in\p\D$, $\theta\in[0,2\pi)$ we have $dz=ie^{i\theta}d\theta$ and 
\[\tilde{b}_1^{\pm}(k) = \frac{1}{2\pi}\int_0^{2\pi}\left[M^\pm_\sigma(e^{i\theta},k) -1\right]e^{i\theta}\;d\theta.\]
We can now approximate the continuous integral above using a finite sum:
\begin{equation}\label{eq-b1-tilde-sum-approx}
\tilde{b_1^{\pm}}(k) \approx \frac{\Delta\theta}{2\pi}\sum_{n=1}^{N_z}\left[M^\pm_\sigma(e^{i\theta_n},k) -1\right]e^{i\theta_n},
\end{equation}
where $n=1,\ldots,N_z$ are the indices for the function values $M^\pm_\sigma(z_n,k)$ obtained numerically in Step~1 above, and $\Delta\theta$ denotes the stepsize in $\theta$ along the unit circle.
\subsection{Step 3: Computing the Schr\"{o}dinger Scattering Data}
The isotropic (truncated) Schr\"odinger scattering data $\mathbf{t}^R(k)$ is then computed by \eqref{eq-t-def}
\begin{equation}\label{eq-dbark-C2-truncated-tau}
 \mathbf{t}^R(k)=\begin{cases}
 -4\pi i\bar{k}\tau(k) & |k|\leq R\\
 0 & |k|>R,
 \end{cases}
\end{equation}
where $\tau(k)$ is evaluated via \eqref{eq-tau-def}
\[\tau(k) = \frac{\overline{\tilde{b}_1^+(k)} - \overline{\tilde{b}_1^-(k)}}{2}.\]
\subsection{Step 4: Solving the D-bar Equation}
As $\mathcal{M}\sim 1$ for $|\zeta|\to\infty$,  and $\frac{1}{\pi k}$ is the fundamental solution for the $\dbark$ operator, the solution to the  $\dbark$ equation
\[\dbark \mathcal{M}(\zeta,k)=\frac{1}{4\pi\bar{k}} \mathbf{t}(k) e(\zeta,-k)\overline{\mathcal{M}(\zeta,k)},\]
can be written as 
\begin{equation}\label{eq-dbark-C2-sol}
\mathcal{M}(\zeta,k)=1+\frac{1}{\pi}\int_{\R^2} \frac{\mathbf{t}(\kappa) e(\zeta,-\kappa)\overline{\mathcal{M}(\zeta,\kappa)}}{4\pi\bar{\kappa}(k-\kappa)}\;d\kappa_1 d\kappa_2,
\end{equation}
or in terms of convolutions
\begin{equation}\label{eq-dbark-C2-sol-conv}
\mathcal{M}(\zeta,k)=1+\frac{1}{\pi k}\ast\left( \frac{1}{4\pi\bar{k}} \mathbf{t}(k) e(\zeta,-k)\overline{\mathcal{M}(\zeta,k)}\right),
\end{equation}
where the convolution takes place in the $k$ variable.  Note that the integration takes place over the entire plane.  Using the truncated scattering data $\mathbf{t}^R$ we can instead find the regularized solution $\mathcal{M}_R(\zeta,k)$ by solving
\begin{equation}\label{eq-dbark-C2-sol-trunc}
\mathcal{M}_R(\zeta,k)=1+\frac{1}{\pi}\int_{|\kappa|\leq R} \frac{\mathbf{t}^R(\kappa) e(\zeta,-\kappa)\overline{\mathcal{M}_R(\zeta,\kappa)}}{4\pi\bar{\kappa}(k-\kappa)}\;d\kappa_1 d\kappa_2,
\end{equation}
or in terms of convolutions
\begin{equation}\label{eq-dbark-C2-sol-conv-trunc}
\mathcal{M}_R(\zeta,k)=1+\frac{1}{\pi k}\ast\left( \frac{1}{4\pi\bar{k}} \mathbf{t}^R(k) e(\zeta,-k)\overline{\mathcal{M}_R(\zeta,k)}\right),
\end{equation}
where the integration now takes place over a disc of radius $R$.

From \cite{Mueller2003, Knudsen2009} this gives the correct result as $R\to\infty$.  To solve for $\mathcal{M}_R$ numerically, we exploit the convolution using Fourier transforms and Vainikko's fast solution method on the special $k$-grid defined above in Section~\ref{sec-algorithm-setup} by
\begin{eqnarray}
\mathcal{M}_R(\zeta,k) 
&=& 1 + \mathcal{F}^{-1}\left\{\mathcal{F}\left(\frac{1}{\pi k}\ast \frac{1}{4\pi\bar{k}} \mathbf{t}^R(k) e(\zeta,-k)\overline{\mathcal{M}_R(\zeta,k)}\right)\right\}\nonumber\\
&=& 1 + \mathcal{F}^{-1}\left\{\mathcal{F}\left(\frac{1}{\pi k}\right) \mathcal{F}\left(\frac{1}{4\pi\bar{k}} \mathbf{t}^R(k) e(\zeta,-k)\overline{\mathcal{M}_R(\zeta,k)}\right)\right\}\label{eq-dbark-C2-sol-trunc-FFT}.
\end{eqnarray}
The singularity at $k=0$ is dealt with in the following way.  The $k=0$ entry for ${\mathbf{t}^R(k)}/{\bar{k}}$ is set to zero, its analytic limiting value.  Additionally, the fundamental solution $\frac{1}{\pi k}$ is set to zero for $k=0$ (see \cite{Knudsen2004} for further details).  The resulting system in \eqref{eq-dbark-C2-sol-trunc-FFT} can then be solved using matrix-free GMRES.

After solving \eqref{eq-dbark-C2-sol-trunc-FFT} for a given $\zeta\in\widetilde{\Omega}$, the $k=0$ entry $\mathcal{M}_R(\zeta,0)$ is saved.  This step is repeated (in parallel if desired) for each $\zeta\in\tilde{\Omega}$.  The isotropic (regularized) $C^2$ conductivity $\gamma_R$ is then determined by
\begin{equation}\label{eq-M-to-gam-regularized}
\gamma_R(\zeta)=\left(\mathcal{M}_R(\zeta,0)\right)^2\approx\sqrt{\det\left(\sigma(F^{-1}(\zeta)\right)}.
\end{equation}

\section{Numerical Results and Discussion}\label{sec-results}
We tested the algorithm presented in Section~\ref{sec-algorithm} on simulated anisotropic EIT data for both  $C^2$ smooth conductivities and piecewise-constant conductivities (see Figures~\ref{fig-phantoms-PIPES} and \ref{fig-phantoms-HnL}).  For each phantom, we solved the anisotropic conductivity equation \eqref{eq-conduct}
\[\nabla\cdot\sigma\nabla u=0,\]
with Neumann boundary condition $\sigma\frac{\p u}{\p\nu}=\phi_j$ on $\bndry$ defined by \eqref{Trig}, for $j=1,\ldots 32$, using the Finite Element Method (FEM).  The numerical FEM solution was used to form a $33\by33$ discrete matrix approximation $\mathbf{R}_\sigma$ to the Neumann-to-Dirichlet map, and subsequently the discrete matrix approximation $\mathbf{L}_\sigma$ to the D-N map $\Lambda_\sigma$ as described in \cite[Section 13.2]{Mueller2012}.

The traces of the anisotropic \textsc{CGO} solutions $M^\pm_\sigma$ were solved numerically (via \eqref{AP_BIEmatrix}), for $z=e^{i\theta}\in\bndry$ where
\begin{equation*}\label{eq-theta-BIE-values}
\theta\in\frac{2\pi}{33}\left[-16,-15,\ldots,-1,0,1,\ldots, 15, 16\right].
\end{equation*}
The scattering radius $R$ and corresponding $k$-grid are specific to each problem and are stated below.  In each example, the $\dbar_k$ equation \eqref{eq-dbark-C2} was solved for $|\zeta|\leq 1.2$ with step-size $h_\zeta=0.0094$, where the truncated isotropic representative conductivity $\gamma_R(\zeta)\approx(F_*\sigma)(\zeta)=\sqrt{\det\sigma\left(F^{-1}(\zeta)\right)}$ was recovered.

Our FEM computations were performed using {\sc Matlab}'s PDE toolbox. The basis functions were piecewise linear, and the triangular mesh generated for the unit disc domain comprised of 262,144 triangles and 131,585 vertices. The conductivity values were specified at the barycenter of each triangle. Thus, strictly speaking, none of our computational conductivity models were really $C^2$ but rather piecewise constant in a relatively fine triangular tiling, and the conductivity function used to specify the values of the coefficient $\sigma$ was $C^2$ smooth.  A more detailed analysis of the effect of piecewise constant approximations is outside the scope of this paper.

Without parallelization, the algorithm takes approximately 5 seconds on a standard laptop when using a $32\by 32$ scattering $k$-grid and reconstructing on a $32\by32$ spatial $\zeta$-grid.  The reconstructions shown here were computed using a $128\by 128$ scattering grid and $256\by 256$ spatial grid, to show higher resolution, and took approximately 45 minutes each (again, without parallelization).  We note that the algorithm is parallelizable in the following steps: $\Lambda_\sigma\longrightarrow \mathbf{t}(k)$ (in $k$), and $\mathbf{t}(k)\longrightarrow \gamma_R(\zeta)$ (in $\zeta$).  A detailed study of optimizing the algorithm for faster reconstructions is outside the scope of this paper.

\subsection{Test~1: Two Circular Inclusions}
Our first test involves the $C^2$ smooth anisotropic phantom shown in Figure~\ref{fig-phantoms-PIPES} with two circular inclusions with orthogonal directional preferences
\begin{equation}\label{eq-sig-PIPES-sig1-sig2}
\sigma_1=\left[\begin{array}{cc}
1 & 0\\
0 &4\\
\end{array}\right],\qquad
\sigma_2=\left[\begin{array}{cc}
2 & 0\\
0 &1\\
\end{array}\right].
\end{equation}
The right inclusion is more conductive in the vertical direction whereas the left inclusion more conductive in the horizontal direction. 
\begin{figure}[h!] 
\centering
\begin{picture}(140,130)
\put(0,-5){\includegraphics[height=125pt]{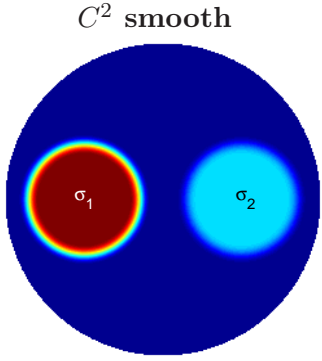}}
\put(30,122){\textbf{$C^2$ smooth}}
\end{picture}
\caption{\label{fig-phantoms-PIPES}The $C^2$ smooth two circular inclusions phantom used in Test~1. The conductivities in the left circular region, $\sigma_1$,  and in the right circular region $\sigma_2$ are defined in \eqref{eq-sig-PIPES-sig1-sig2}.  The background conductivity is the $2\by 2$ identity matrix.}
\end{figure}

The boundary integral equation \eqref{AP_BIEmatrix} was solved to recover $M^\pm_\sigma$ for $z\in\p\D$ as described above, with $|k|\leq 6$ to evaluate the scattering transform $\mathbf{t}_R(k)$ on a $k$-grid with a step-size of $h_k\approx 0.094$.  The $\dbark$-equation was then solved, and the truncated isotropic representative conductivity $\gamma_R(\zeta)\approx(F_*\sigma)(\zeta)=\sqrt{\det\sigma\left(F^{-1}(\zeta)\right)}$ was recovered.
Figure~\ref{fig-recon-PIPES-c2} shows the reconstructed isotropization $\gamma_R(\zeta)$ (Right) and the true $\sqrt{\det\sigma(\zeta)}$ (Middle) in the deformed coordinates, as well as the true $\sqrt{\det\sigma(z)}$ in the physical coordinates.  In $\sqrt{\det\sigma}$, the prescribed maximum value of the left inclusion is 2, and 1.41 in the right inclusion.  The reconstructed maximums are 2.34 and 1.61, respectively.

\begin{figure}[h!] 
\centering
\begin{picture}(410,150)
\put(0,0){\includegraphics[height=130pt]{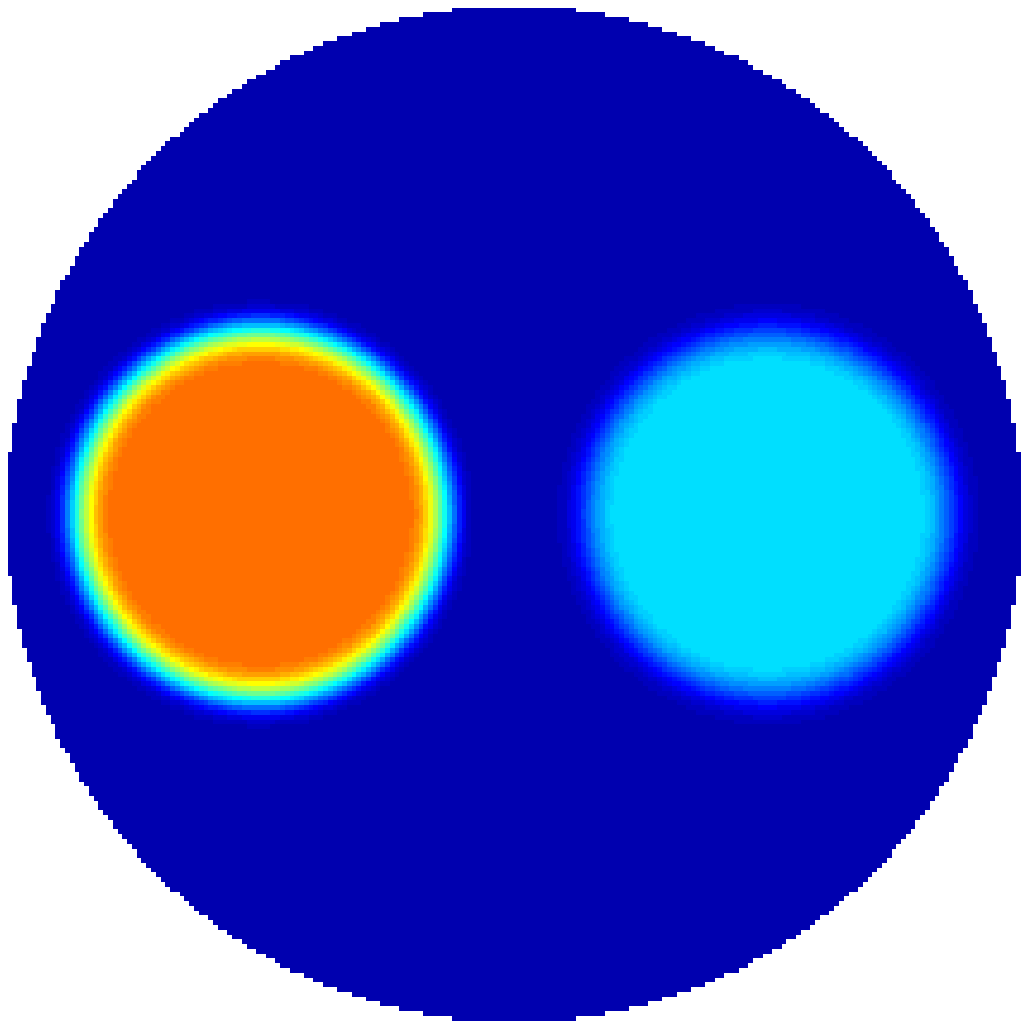}}
\put(140,0){\includegraphics[height=130pt]{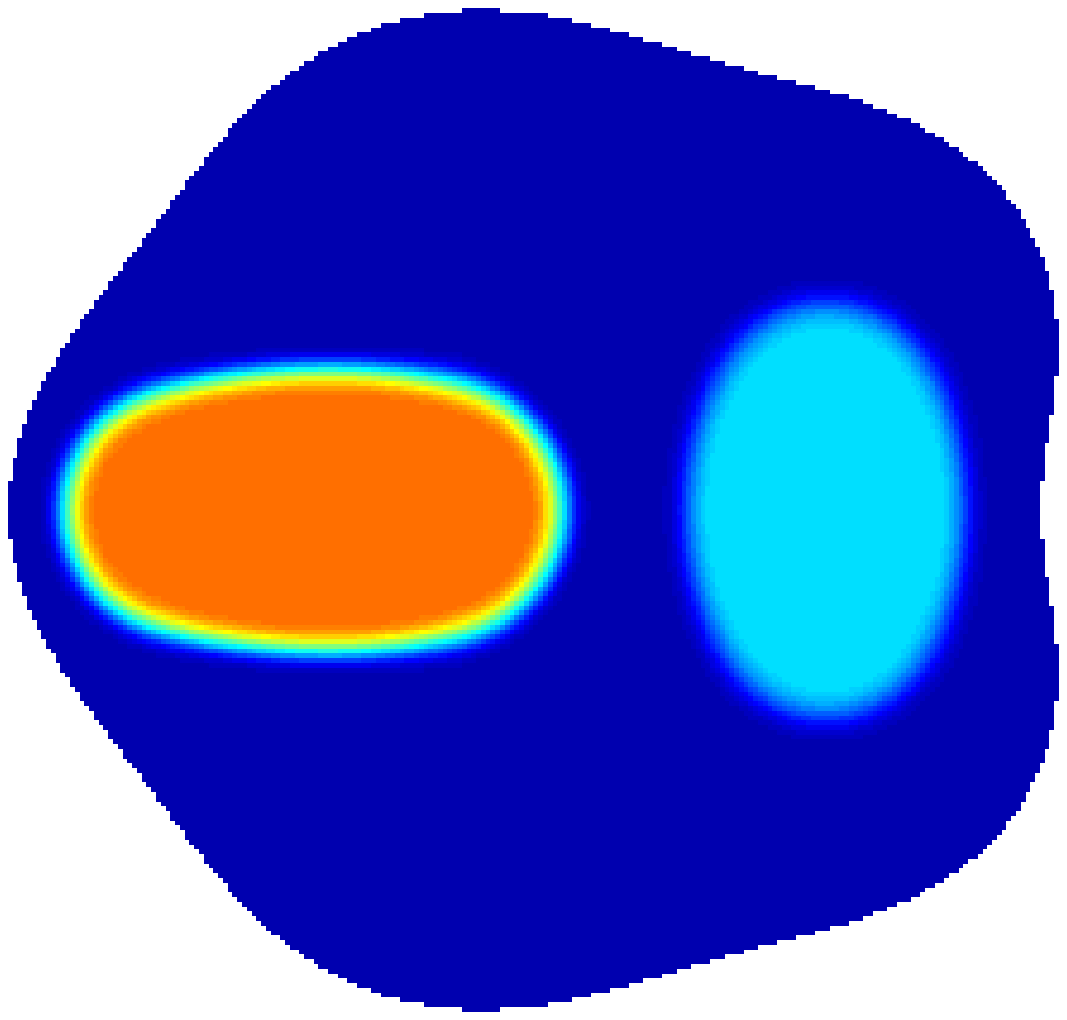}}
\put(280,0){\includegraphics[height=130pt]{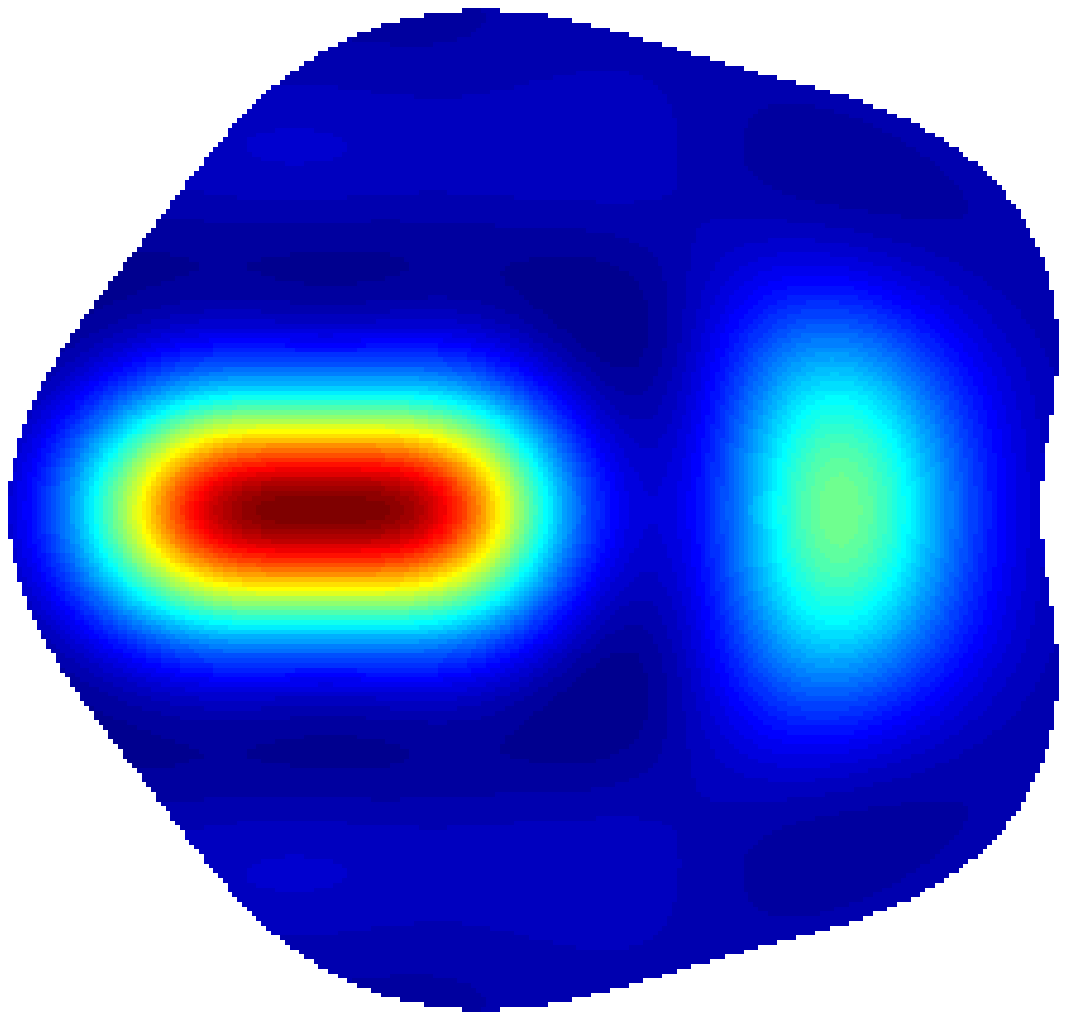}}
\put(35,130){${\sqrt{\det\sigma(z)}}$}
\put(175,130){${\sqrt{\det\sigma(\zeta)}}$}
\put(330,130){${\gamma_R(\zeta)}$}
\end{picture}
\caption{\label{fig-recon-PIPES-c2} Results for the $C^2$ smooth circular inclusions phantom of Figure~\ref{fig-phantoms-PIPES} in Test~1.  Left: Isotropization of the true conductivity $\sigma$ shown in the physical coordinates.  Middle: Isotropization of the true conductivity $\sigma$ shown in the deformed coordinates.  Right: Reconstruction of the isotropization of $\sigma$ in the deformed coordinates.   The scattering transform $\mathbf{t}_R(k)$ was computed for $|k|\leq 6$.  The values of the reconstruction $\gamma_R$ are very similar to the true values (Left and Middle), and the change of coordinates is evident from the squeezed ellipses.  The figures are plotted on the same color scale for ease of comparison.} 
\end{figure}
The anisotropy is visible in the isotropic representation $\gamma_R$ through the coordinate deformation (circles squeezed into ellipses). This is due to the precise underlying quasiconformal map $F$ which satisfies
\[\sqrt{\det\sigma}\left(\begin{array}{cc}
1 & 0 \\
0 & 1
\end{array}\right) = \frac{1}{ J_F}\;DF\;\sigma \left(DF\right)^t\]
where $DF$ denotes the Jacobian of the map $F$, $J_F$ the determinant of the Jacobian $DF$, and $t$ the transpose.  For $z$ inside the inclusion $\sigma_1(z)=\left(\begin{array}{cc}
1 & 0 \\
0 & 4
\end{array}\right)$ the map will deform the coordinates (approximately) by $F:z=(x,y)\mapsto \left(x,\frac{1}{2}y\right)$.  Similarly for $z\in\sigma_2(z)=\left(\begin{array}{cc}
2 & 0 \\
0 & 1
\end{array}\right)$, the map will behave (approximately) as $F:z=(x,y)\mapsto \left(\frac{1}{\sqrt{2}}x,y\right)$ which provides intuition for the vertical and horizontal squeezing displayed in the reconstructions of Figure~\ref{fig-recon-HnL-c2}.  

\subsection{Test~2: A Heart and Lungs Phantom}
Our next test involves the anisotropic phantoms shown in Figure~\ref{fig-phantoms-HnL} with two ovular inclusions (lungs) and a circular inclusion (heart) with conductive directional preferences
\begin{equation}\label{eq-sig-HnL-sig3-sig4}
\sigma_3=\left[\begin{array}{cc}
0.4 & 0\\
0 &0.8\\
\end{array}\right],\qquad
\sigma_4=\left[\begin{array}{cc}
6 & 0\\
0 &2\\
\end{array}\right].
\end{equation}
The ovular lung inclusions are less conductive than the background with a directional preference in the $x_2$ direction whereas the heart shaped inclusion more conductive in the  $x_1$ direction.  Here we consider a piecewise constant phantom and its $C^2$ smoothed version (right and left in Figure~\ref{fig-phantoms-HnL}, respectively).  Clearly this is an idealized version of a 2D cross-section and does not model the true 3D nature of the anistropy present in the heart.

\begin{figure}[h!] 
\centering
\begin{picture}(285,130)
\put(0,-5){\includegraphics[height=125pt]{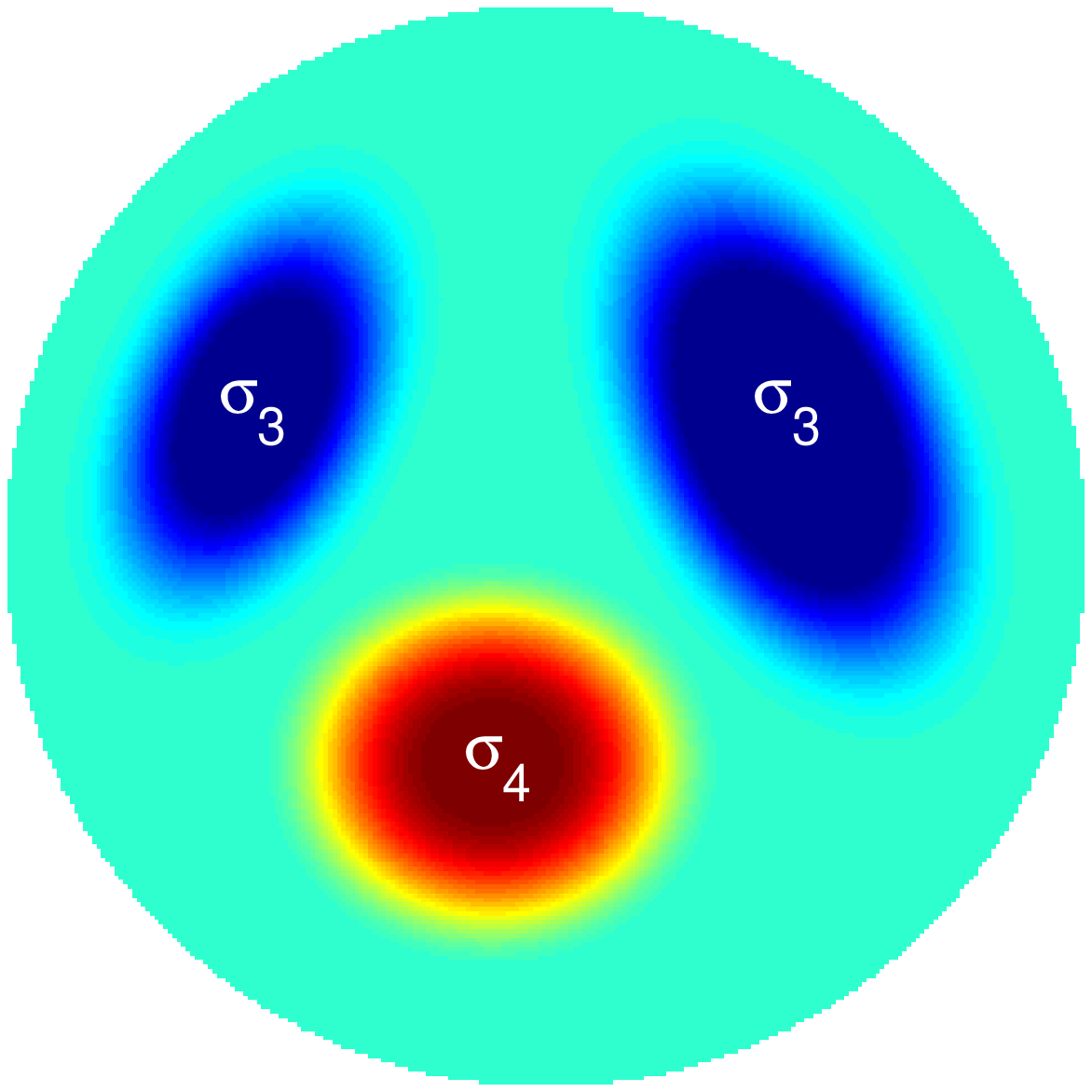}}
\put(150,-5){\includegraphics[height=125pt]{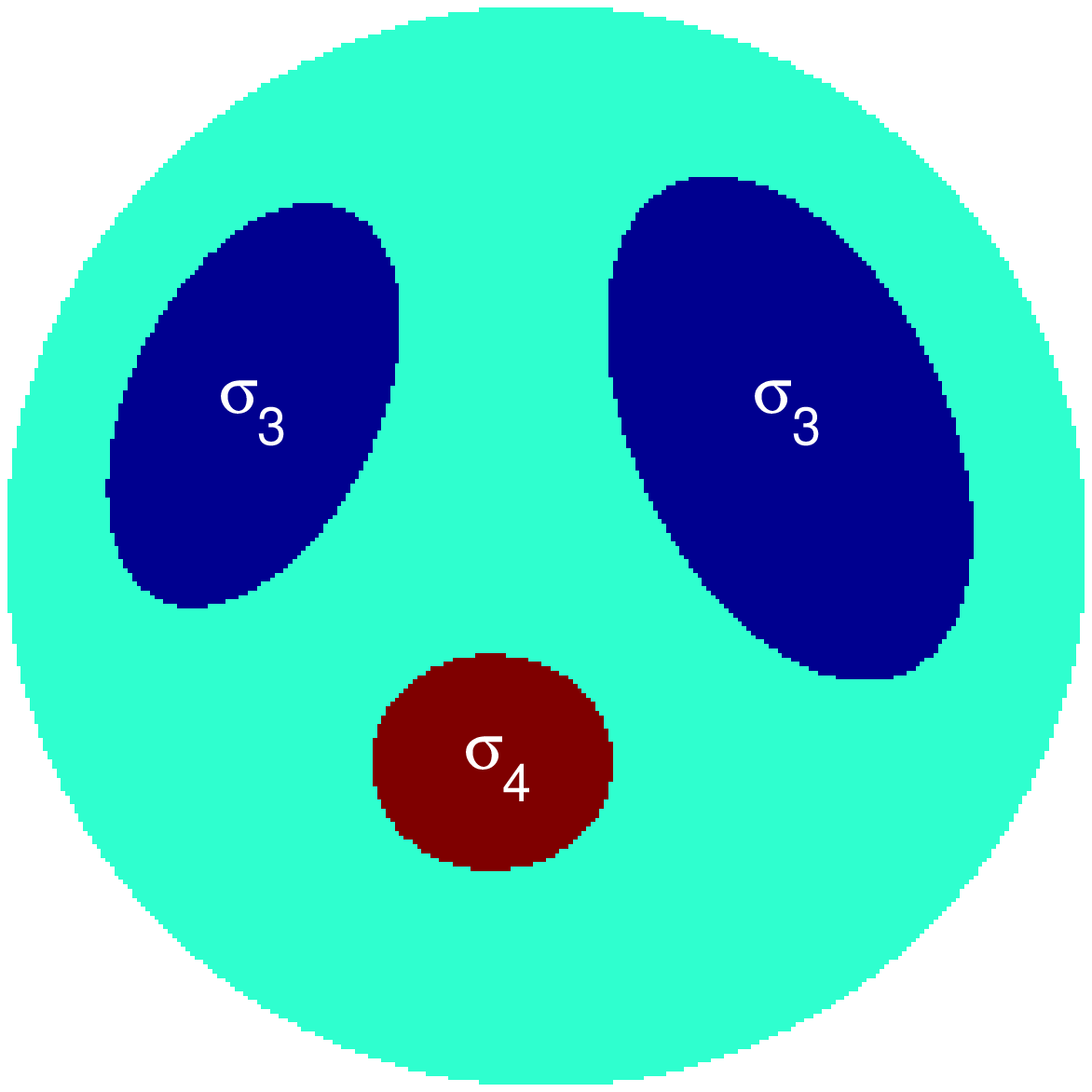}}
\put(30,122){\textbf{$C^2$ smooth}}
\put(170,122){\textbf{Discontinuous}}
\end{picture}
\caption{\label{fig-phantoms-HnL}The heart and lungs phantom used in Test~2.  The $C^2$ smooth and corresponding piecewise constant phantoms are shown on the left and right, respectively.  The conductivity in the ovular lungs regions, $\sigma_3$,  and in the circular heart region $\sigma_4$ are defined in \eqref{eq-sig-HnL-sig3-sig4}. The background conductivity is the $2\by 2$ identity matrix.}
\end{figure}

We first work with the $C^2$ smoothed  phantom shown in Figure~\ref{fig-phantoms-HnL}.  The boundary integral equation \eqref{AP_BIEmatrix} was solved to recover $M^\pm_\sigma$ for $z\in\p\D$ as described above, with $|k|\leq 7$ to evaluate the scattering transform $\mathbf{t}_R(k)$ on a $k$-grid with a stepsize of $h_k\approx 0.109$.  The $\dbark$-equation was then solved and the isotropic representative conductivity $\gamma_R(\zeta)\approx(F_*\sigma)(\zeta)=\sqrt{\det\sigma\left(F^{-1}(\zeta)\right)}$ was recovered.  Figure~\ref{fig-recon-HnL-c2} shows the reconstructed $\gamma_R(\zeta)$ along with $\sqrt{\det\sigma(\zeta)}$.  The maximum value in $\sqrt{\det \sigma}$ occurs in the \emph{heart} with a value of 3.46  and the minimum in the \emph{lungs} with a value of 0.57.  The recovered values are 4.42 and 0.50, respectively. 
The change of coordinates is evident as the lungs are squeezed vertically and the heart horizontally.
\begin{figure}[h!] 
\centering
\begin{picture}(410,135)
\put(0,0){\includegraphics[height=130pt]{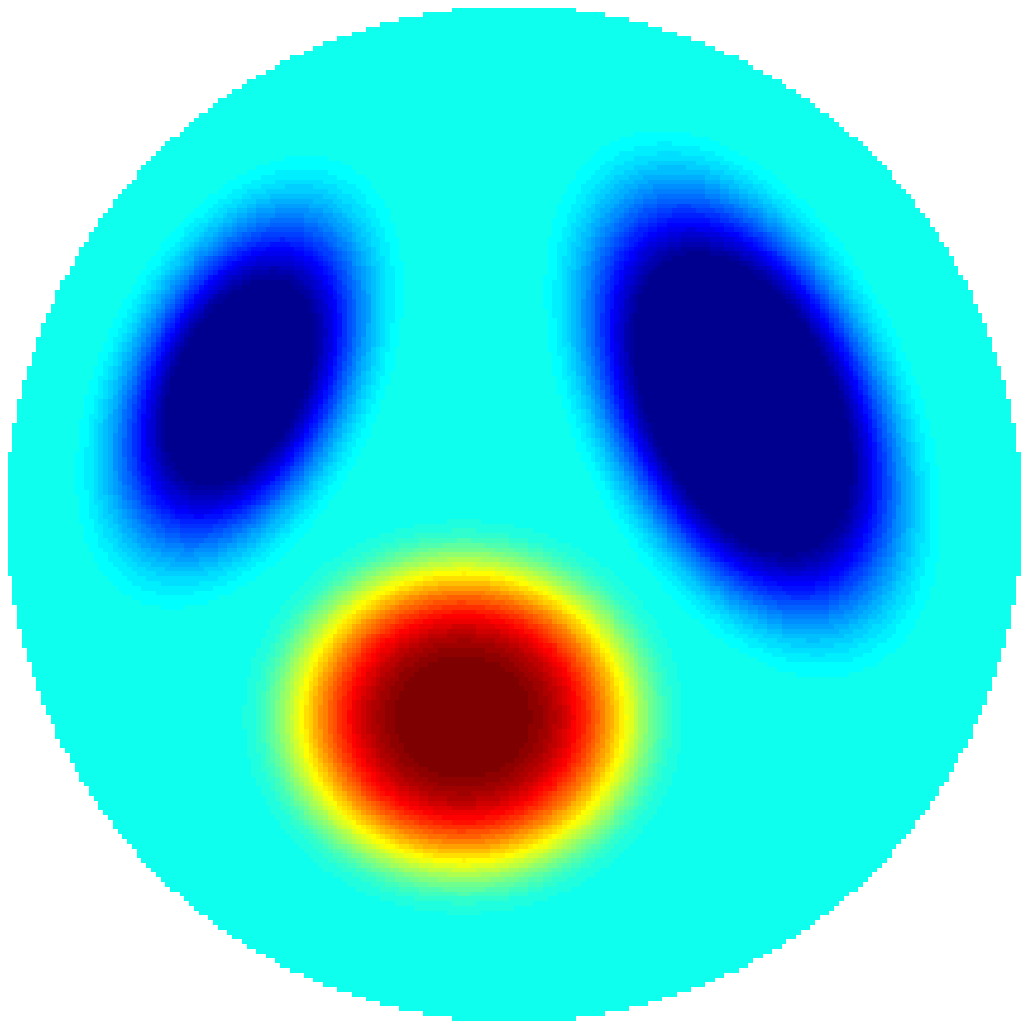}}
\put(140,0){\includegraphics[height=130pt]{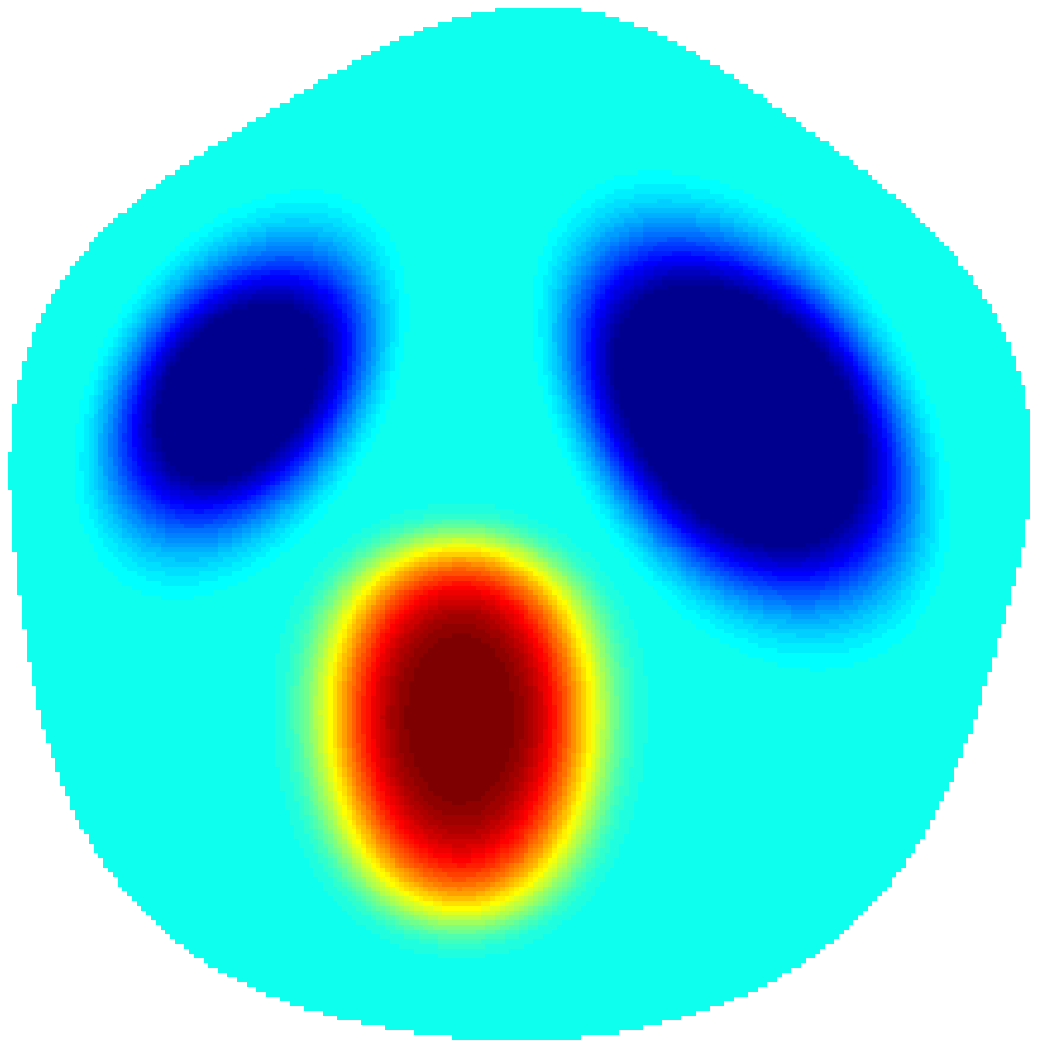}}
\put(280,0){\includegraphics[height=130pt]{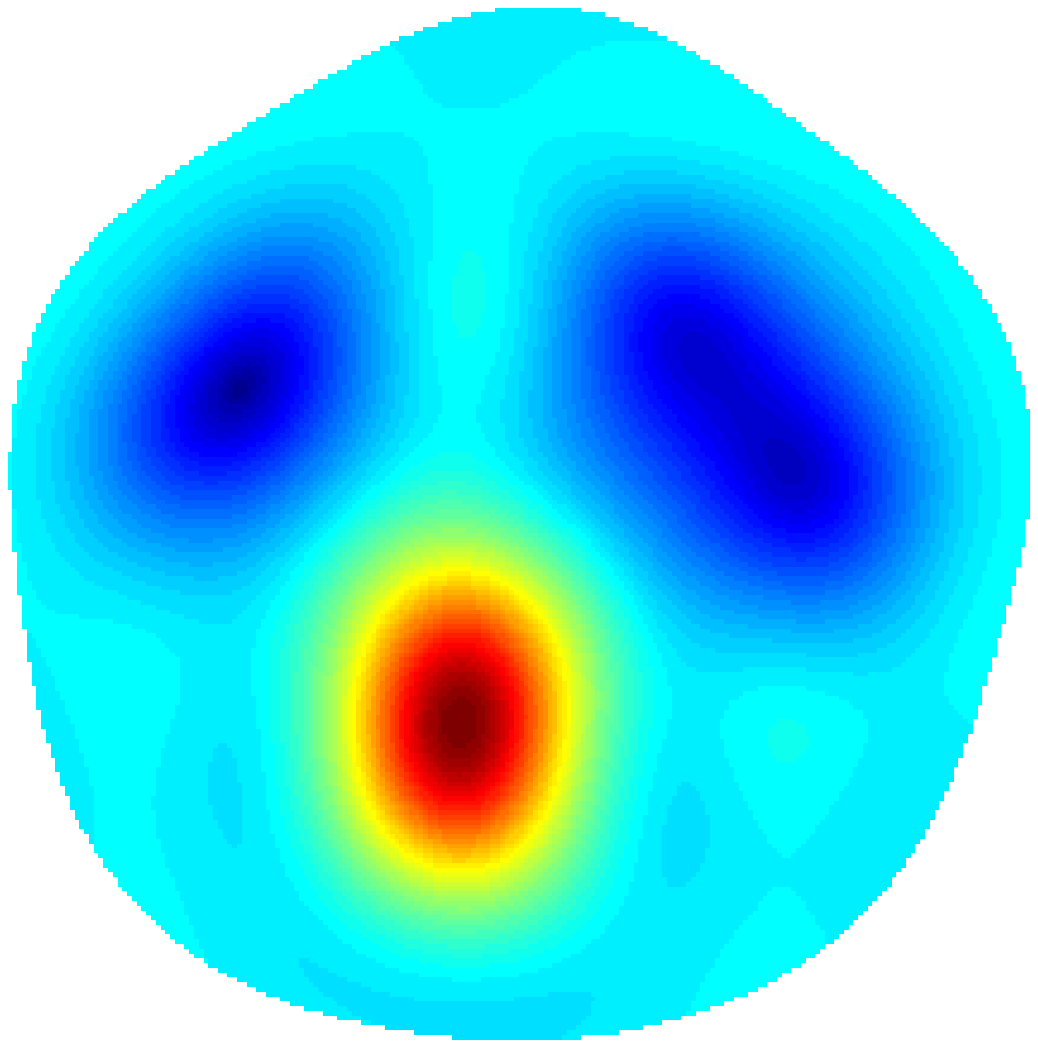}}
\put(35,130){${\sqrt{\det\sigma(z)}}$}
\put(175,130){${\sqrt{\det\sigma(\zeta)}}$}
\put(330,130){${\gamma_R(\zeta)}$}
\end{picture}
\caption{\label{fig-recon-HnL-c2} Results for the $C^2$ smooth heart and lungs phantom in Figure~\ref{fig-phantoms-HnL} in Test~2.  Left: Isotropization of the true conductivity $\sigma$ shown in the physical coordinates.  Middle: Isotropization of the true conductivity $\sigma$ shown in the deformed coordinates.  Right: Reconstruction of the isotropization of $\sigma$ in the deformed coordinates.   The scattering transform $\mathbf{t}_R(k)$ was computed for $|k|\leq 7$.  The squeezing in the isotropic reconstruction $\gamma_R$ clearly shows stronger $x_1$ and $x_2$ anisotropies in the heart and lung inclusions, respectively.  The figures are plotted on the same color scale for ease of comparison.}
\end{figure}

\begin{figure}[h!] 
\centering
\begin{picture}(410,145)
\put(0,0){\includegraphics[height=130pt]{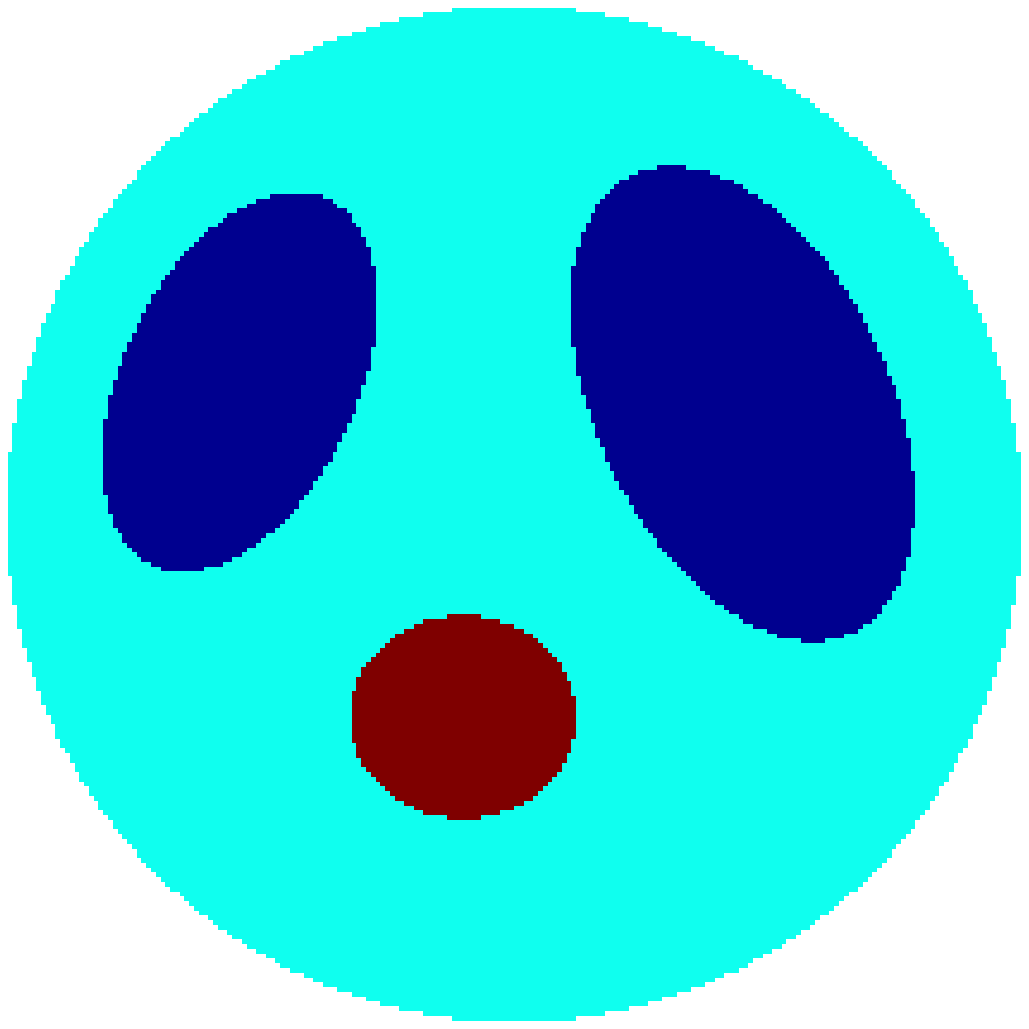}}
\put(140,0){\includegraphics[height=130pt]{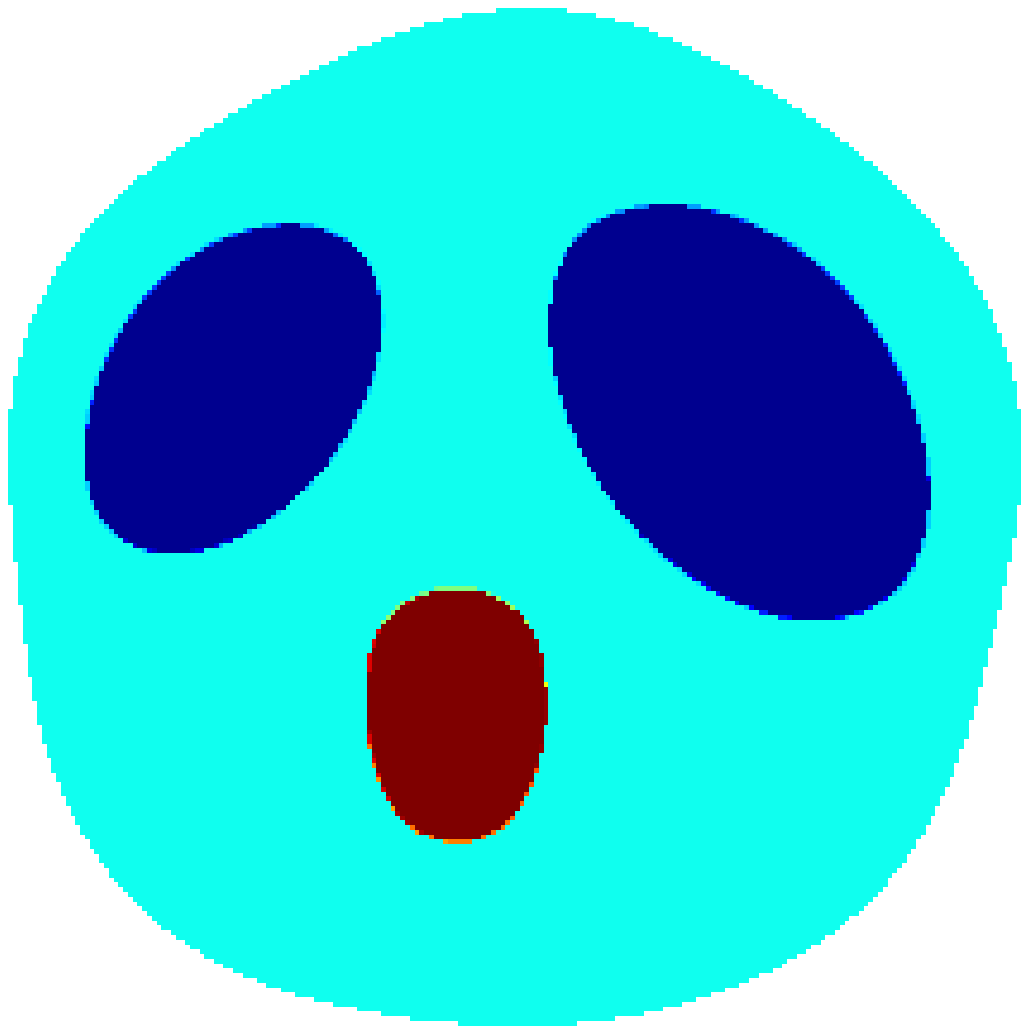}}
\put(280,0){\includegraphics[height=130pt]{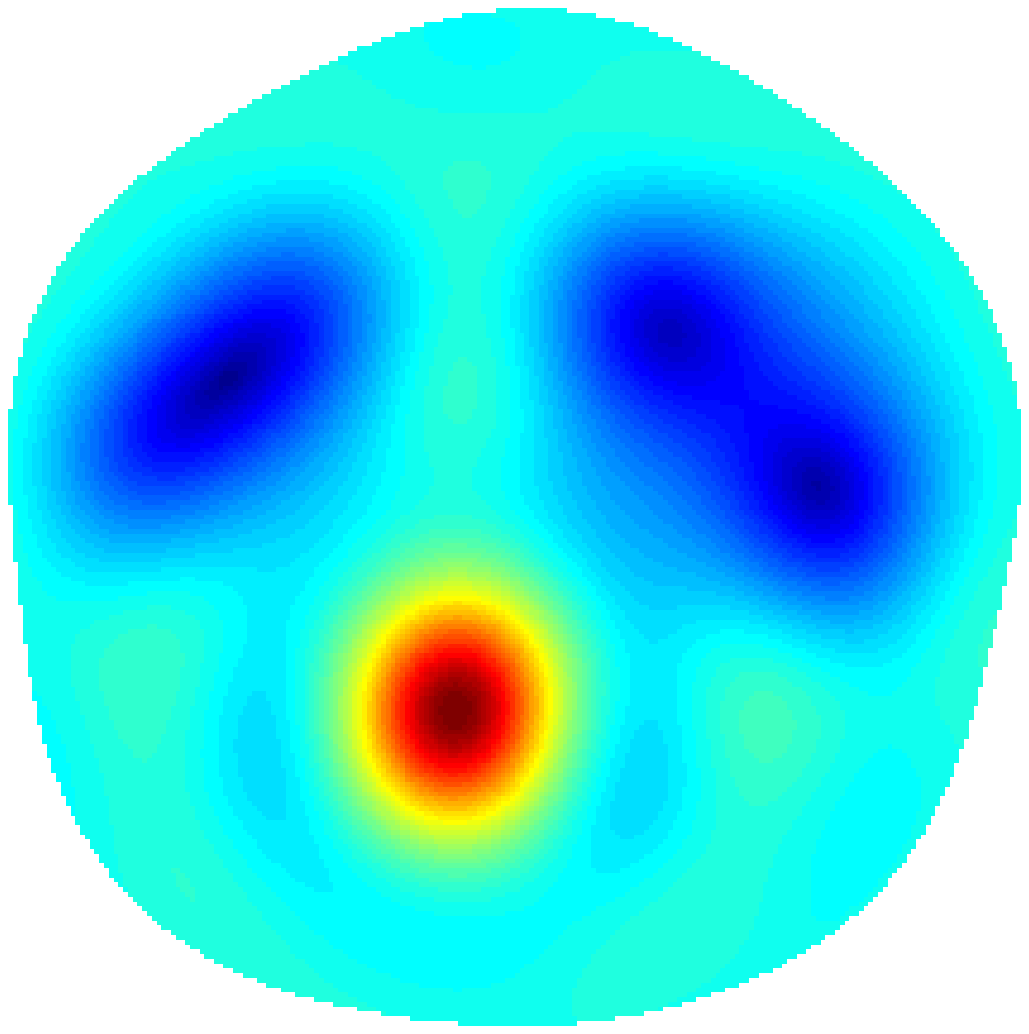}}
\put(35,130){${\sqrt{\det\sigma(z)}}$}
\put(175,130){${\sqrt{\det\sigma(\zeta)}}$}
\put(330,130){${\gamma_R(\zeta)}$}
\end{picture}
\caption{\label{fig-recon-HnL-Linf} Results for the piecewise constant heart and lungs phantom in Figure~\ref{fig-phantoms-HnL} in Test~2.  Left: Isotropization of the true conductivity $\sigma$ shown in the physical coordinates.  Middle: Isotropization of the true conductivity $\sigma$ shown in the deformed coordinates.  Right: Reconstruction of the isotropization of $\sigma$ in the deformed coordinates.   The scattering transform $\mathbf{t}_R(k)$ was computed for $|k|\leq 7$.  Again, the squeezing in the isotropic reconstruction $\gamma_R$ clearly shows stronger $x_1$ and $x_2$ anisotropies in the heart and lung inclusions, respectively.  The figures are plotted on the same color scale for ease of comparison.}
\end{figure}

It should be noted that while the theory described above does not theoretically hold for discontinuous phantoms, as we passed to the $C^2$ smooth proof of Nachman \cite{Nachman1996}, the D-bar algorithm resulting from Nachman's Schr\"odinger based $C^2$ proof has been used effectively on piecewise constant phantoms suggesting that the Schr\"odinger based D-bar algorithm can be used on discontinuous phantoms.  In light of this, we tested the algorithm on the discontinuous version of the $C^2$ phantom also shown in Figure~\ref{fig-phantoms-HnL}.  

The same $k$ and $z$ grids were used as in the $C^2$ smoothed case.  Figure~\ref{fig-recon-HnL-Linf} shows the reconstructed $\gamma_R$ again with strong $x_1$ anisotropy in the heart and $x_2$ in the lungs clearly visible via the squeezing of the underlying change of coordinates.  Here the maximum and minimum recovered values are are 4.03 and 0.47, respectively. 
\subsection{Test~3:  Increased Noise}
We now revisit the $C^2$-smooth phantom in Test~1, and introduce Gaussian relative noise to the simulated FEM voltage data as follows.  Let $V^j$ denote the vector of computed boundary voltages for the $j$-th current pattern, $\eta$ the noise level, and $N^j$ a Gaussian random vector (generated by the \texttt{randn} command in {\sc MATLAB}) that is unique for each current pattern $j$.  Denote by $\widetilde{V}^j$ the noisy voltage data computed by
\[\widetilde{V}^j= V^j + \eta N^j \max\left| V^j\right|,\quad 1\leq j\leq 32.\]
The corresponding noisy N-D matrix $\widetilde{\mathbf{R}}_\sigma$ was then computed using $\widetilde{R}_\sigma\phi_j=\widetilde{V}^j$.  The noisy D-N matrix $\widetilde{L}_\sigma$ was then formed using $\widetilde{\mathbf{R}}_\sigma$.

\begin{figure}[h!]
\centering
\hspace{-3.5em}
\begin{picture}(415,110)
\put(0,0){\includegraphics[height=90pt]{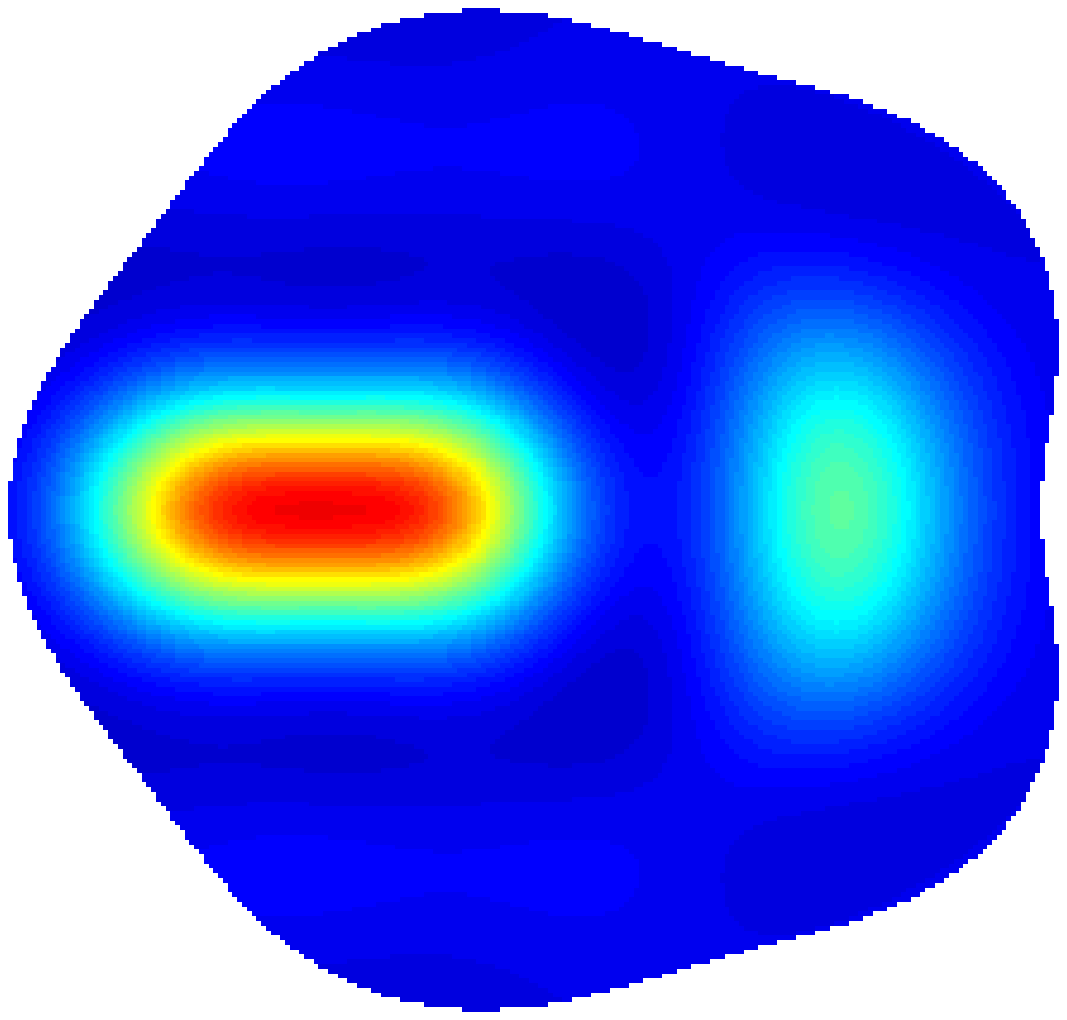}}
\put(90,0){\includegraphics[height=90pt]{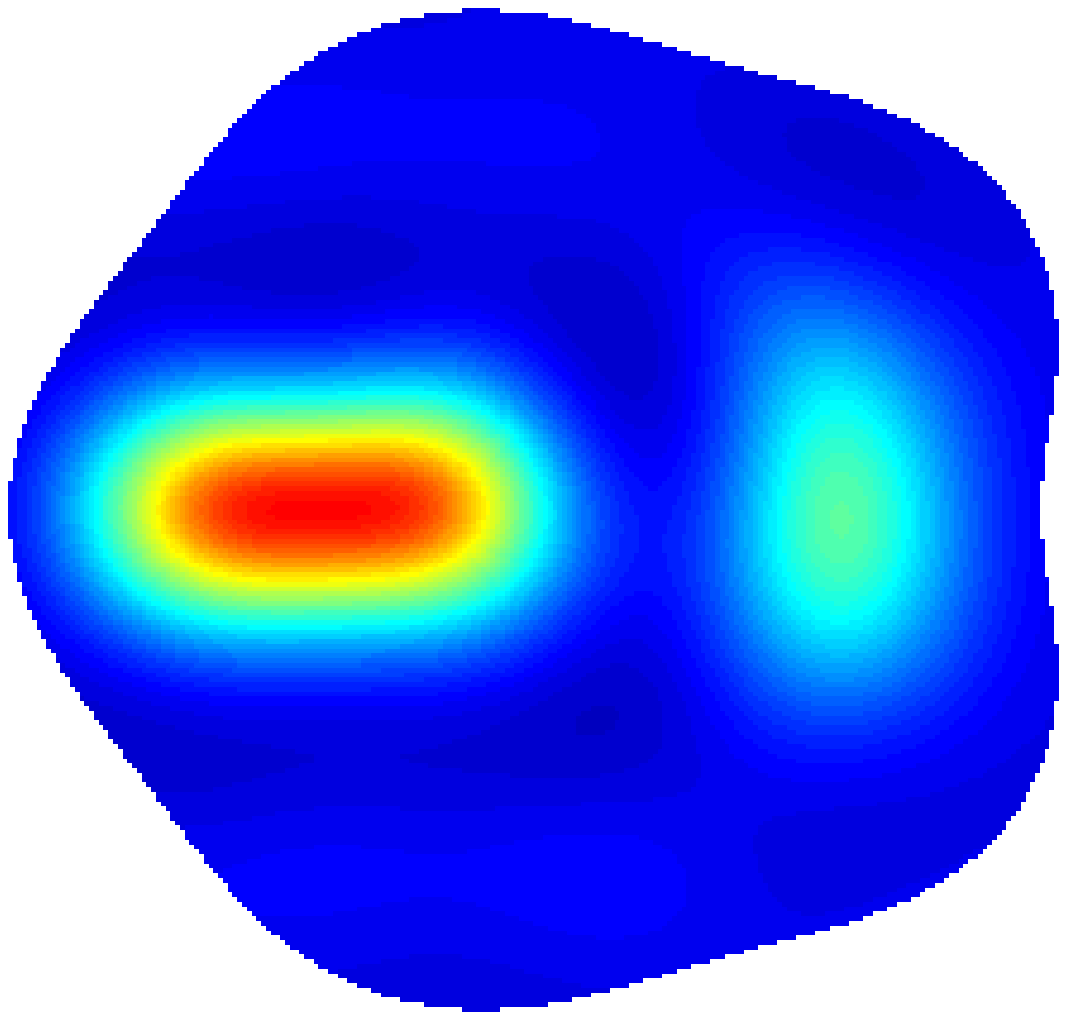}}
\put(180,0){\includegraphics[height=90pt]{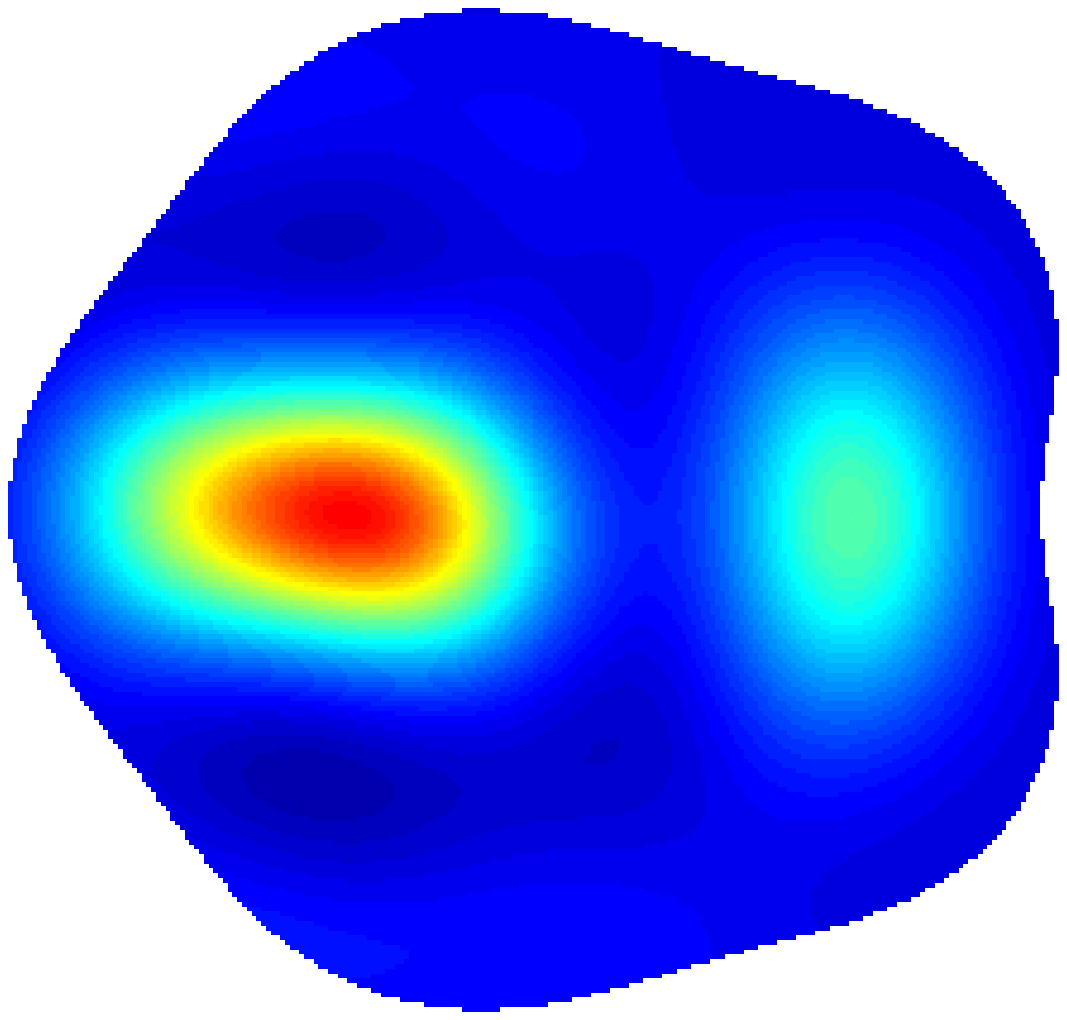}}
\put(270,0){\includegraphics[height=90pt]{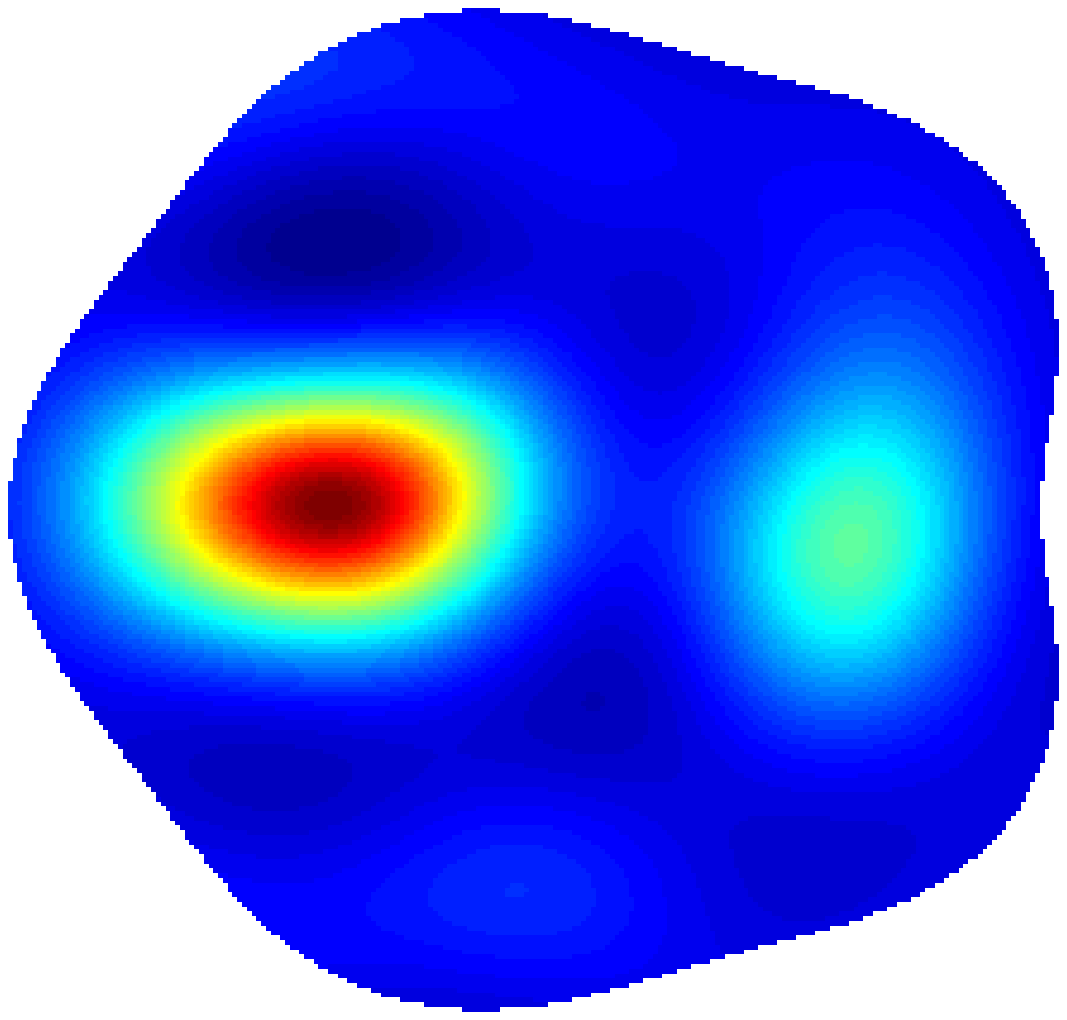}}
\put(360,0){\includegraphics[height=90pt]{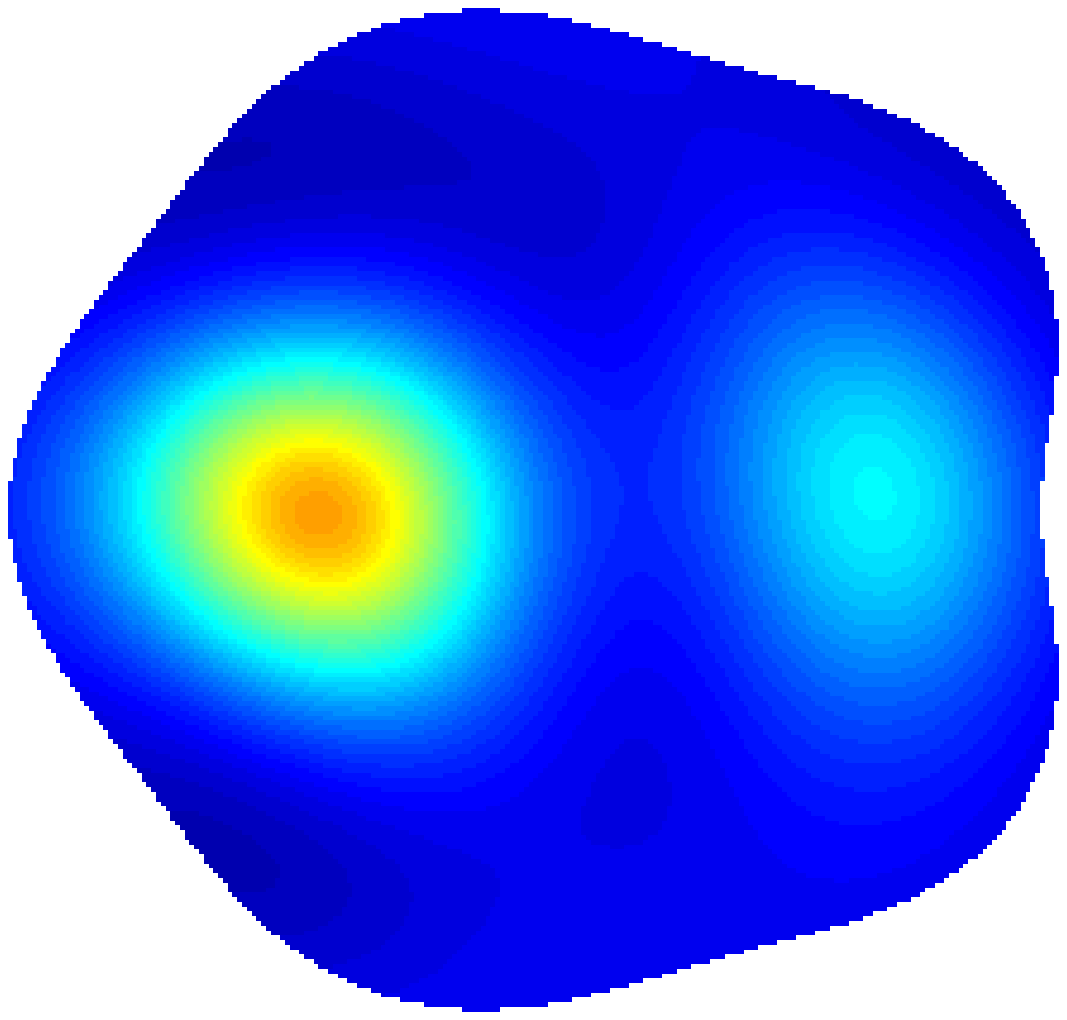}}
\put(35,92){$0\%$}
\put(120,92){$0.01\%$}
\put(210,92){$0.10\%$}
\put(300,92){$0.25\%$}
\put(390,92){$1.0\%$}
\end{picture}
\caption{\label{fig-recon-PIPES-c2-NOISY-volts} Reconstructions of the $C^2$ smooth circular inclusions phantom of Figure~\ref{fig-phantoms-PIPES} in Test~1 with various levels of additional noise added to the voltage data.  The scattering data for the reconstructions, from left to right, were computed for $|k|\leq 6.0$, 5.9, 5.0, 4.8, and 3.8.  The figures are plotted on the same color scale for ease of comparison.  Notice that as the level of noise increases, the approximate values of the associated isotropic conductivity $\sqrt{\det\sigma}$ are retained, however the directional preferences of the underlying anisotropy become less pronounced.}
\end{figure}

Figure~\ref{fig-recon-PIPES-c2-NOISY-volts} shows the reconstructions with 0\%, 0.01\%, 0.10\%, 0.25\%, and 1.0\%  Gaussian relative noise added to the voltage data.  As the level of noise increased, the radius for the scattering data was forced to decrease, i.e. from $|k|\leq 6.0$, 5.9, 5.0, 4.8, and 3.8.  Table~\ref{table-noisy} contains the maximum values of the left and right inclusions in the reconstructions for the various levels of noise.  It is clear that as the noise level increases it is still possible to determine the approximate values for the isotropic representative conductivity $\sqrt{\det \sigma}$, although the directional preference of the underlying anisotropy becomes less pronounced.  A more detailed study of various types of additional noise is outside the scope of this paper, see \cite{HM12_NonCirc} for a discussion of how noise levels are specific to the EIT device.

\begin{table}[h!]\label{table-noisy}
\centering 
\begin{tabular}{|l|c|c|c|c|c|c|}
  \hline
    &&&&&&\\
Noise Level & True & 0\% & 0.01\% &  0.10\% & 0.25\% & 1.0\% \\
    &&&&&&\\
    \hline
    &&&&&&\\
Left inclusion & 2.00& 2.34 & 2.31 & 2.32 & 2.54 & 2.05 \\
    &&&&&&\\
Right inclusion & 1.41& 1.61 & 1.60& 1.61 & 1.55 & 1.45\\
      \hline
\end{tabular}\caption{Maximum values in the reconstructed isotropizations shown in Figure~\ref{fig-recon-PIPES-c2-NOISY-volts}.}
\end{table}

\subsection{The Deformed Boundary}
It should be noted that the representative isotropic conductivities $\gamma_R$ displayed in this paper are shown on their true deformed boundary (calculated by solving \eqref{eq-Beltrami-F} for each known anisotropic $\sigma$).  In practice, this information is not readily available.  However, Figure~\ref{fig-bndries-def} shows the deformations of the examples, from Tests~1 and 2, compared to the original boundary ($\p\D$) and a circle of radius 1.2.  Notice that the deformation is clearly contained inside the disc of radius 1.2, suggesting that recovering the anisotropic conductivity on a slightly larger domain is sufficient to contain the interior behavior.  Furthermore, as the D-bar method used here allows for point-wise reconstructions for a given point $\zeta$ and it is known that $\gamma\equiv 1$ outside $\widetilde{\Omega}$, one can merely compute at additional $\zeta$ points outside of the original domain $\Omega=\D$ to ensure that the deformed domain is in fact contained.  The reconstruction $\gamma_R$ should be interpreted as a reconstruction in isothermal coordinates, similar to sonograms in ultrasound imaging. 

\begin{figure}[h!] 
\centering
\begin{picture}(400,155)
\put(0,0){\includegraphics[height=125pt]{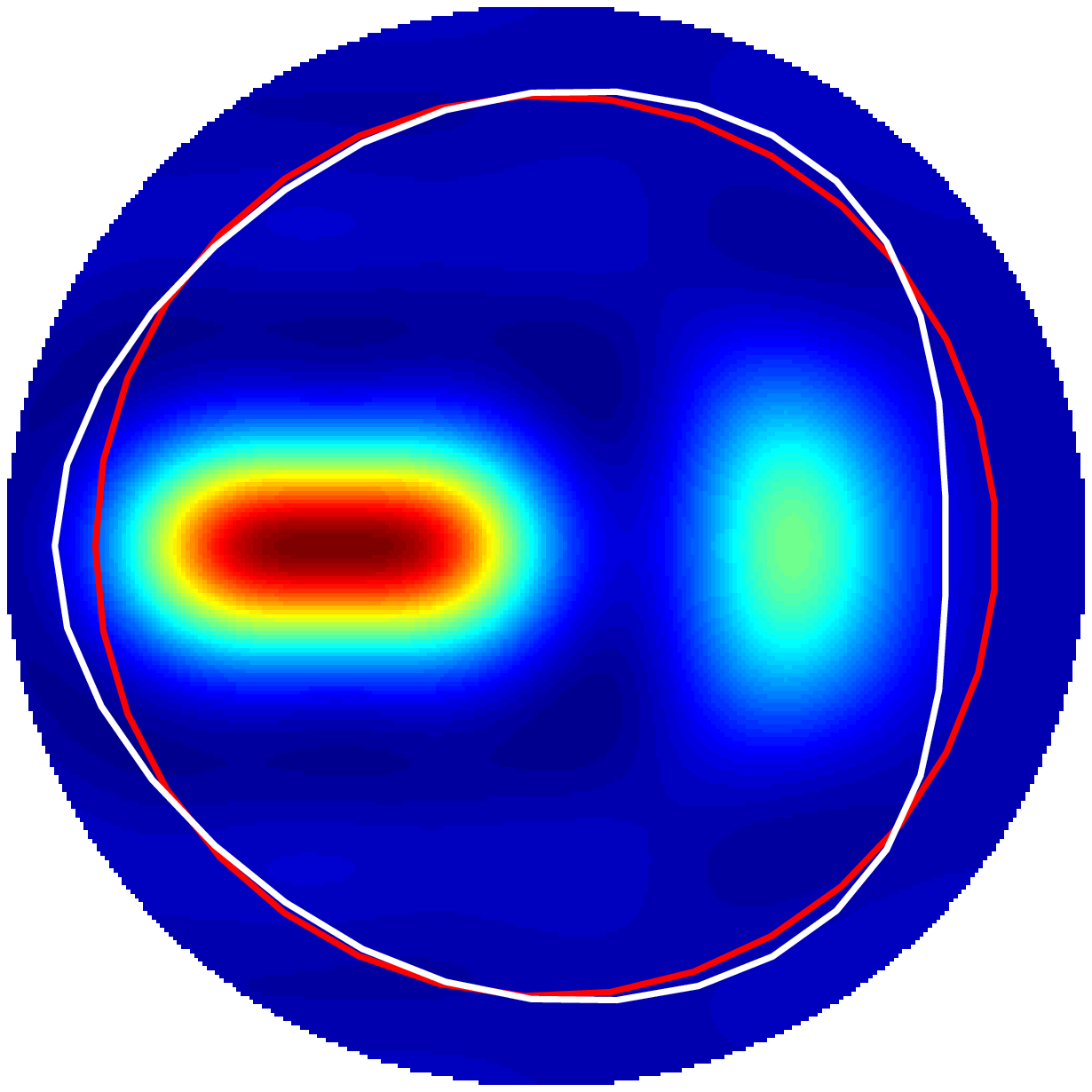}}
\put(140,0){\includegraphics[height=125pt]{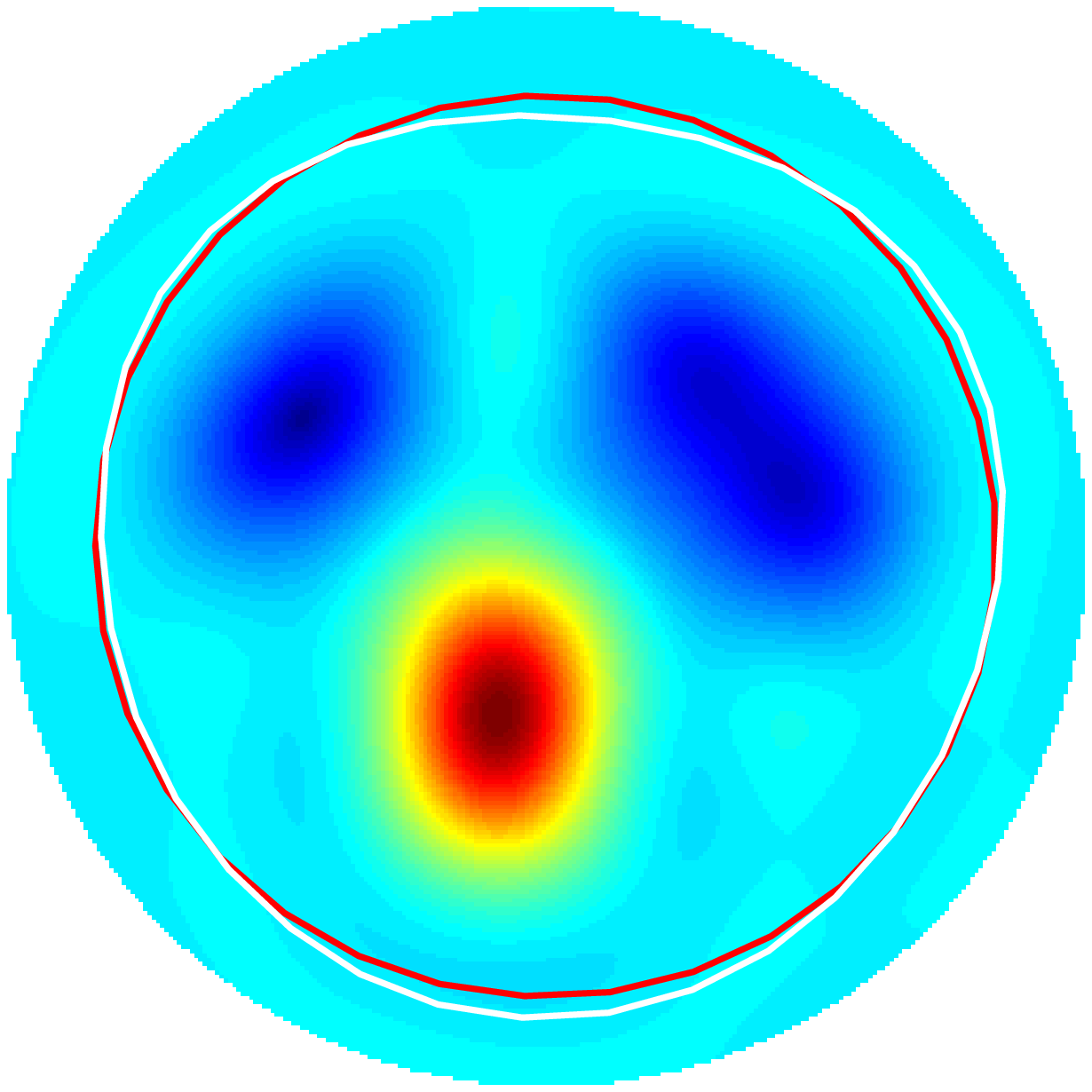}}
\put(280,0){\includegraphics[height=125pt]{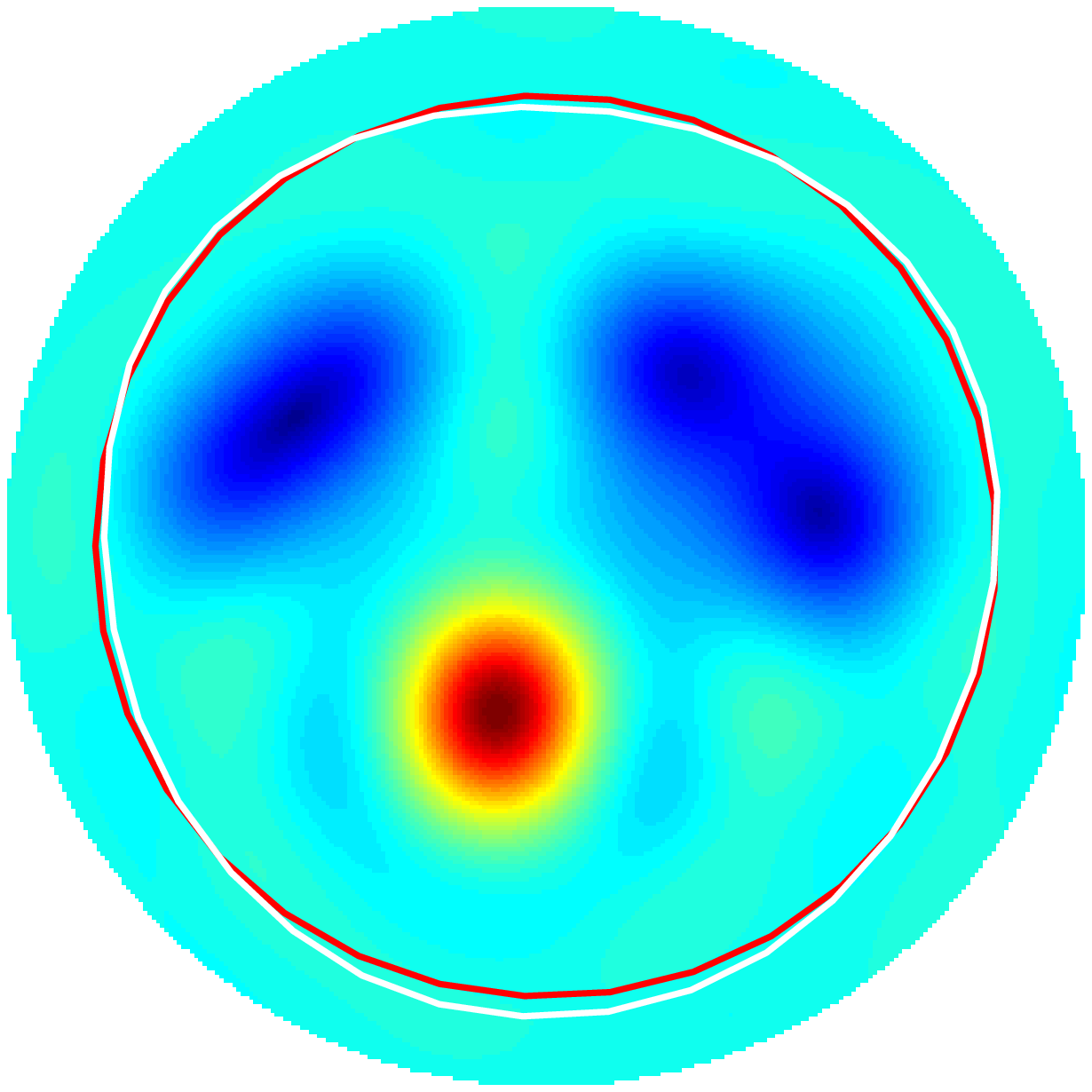}}
\put(40,145){\textbf{Test 1}}
\put(30,130){\textbf{$C^2$ smooth}}

\put(180,145){\textbf{Test 2}}
\put(170,130){\textbf{$C^2$ smooth}}

\put(320,145){\textbf{Test 2}}
\put(300,130){\textbf{Discontinuous}}
\end{picture}
\caption{\label{fig-bndries-def} Reconstructions of Tests~1 and 2 for $|\zeta|<1.2$. The red line indicates the unit circle, the original boundary $\p\D$, whereas the white line represents the deformed boundary calculated via \eqref{eq-Beltrami-F}.  Notice that the deformation is clearly contained inside of the larger domain.}
\end{figure}

We remind the reader that alternatively one could attempt to construct the precise deformation of the boundary $F(\partial\D)=\p\widetilde{\Omega}$.  However, this requires the construction of the highly unstable map $F$ \cite{Astala2005} via a high-frequency $k$ limit of the {\sc CGO} solutions \eqref{eq-Fz-lim}, for $z\in\partial\D$
\[F(z)=\lim_{|k|\to\infty} \frac{\log W^+_\sigma(z,k)}{ik}, \quad z\in\C\setminus\D,\]
where the logarithm is not necessarily taken with respect to the principal branch.  The traces of the CGO solutions $W^+_\sigma(z,k)$ are highly unstable for large $k$ making such a limit impractical numerically.  An in-depth analysis of the stability of such an endeavor is outside the scope of this paper.  

\section*{Conclusions}
In this paper, we have presented a constructive \textsc{CGO} based D-bar proof for $C^2$ smooth anisotropic conductivities in the plane which does not require the construction of the unstable \emph{push-forward} map $F$ used previously in the $L^\infty$ proof of \cite{Astala2005}, and previous {\sc CGO} based 2D anisotropic proofs.  The new proof, presented here, results in the first noise-robust nonlinear D-bar algorithm for the anisotropic conductivity problem in 2D {\sc EIT}.  We have demonstrated that the numerical D-bar algorithm performs well on simulated anisotropic EIT data and noise-robust images of isotropic, or scalar-valued, conductivities that are distorted versions of the original anisotropic conductivities, can be reliably recovered.  

We emphasize that non-uniqueness of the anisotropic inverse conductivity problem should not be such a game-ender and remind the reader that even the isotropic inverse conductivity problem is so severely ill-posed that strong regularization is needed for robust image formation from noisy data.  In a similar fashion to linear inverse problems, non-uniqueness can be dealt with by picking out a unique representative from the same-data equivalence class, which can still provide useful information.  In addition, when the  proposed method is applied to isotropic data, the result coincides with the traditional isotropic D-bar method.

The results of this paper additionally help to explain the geometric distortion observed in isotropic D-bar imaging when there is insufficient knowledge of the boundary, or when the underlying conductivity is in fact anisotropic.  Furthermore, the approach used here paves the way for direct regularized EIT for matrix-valued conductivities, one of the most important aspects of any practical inverse problem, and essential for ensuring a noise-robust reconstruction algorithm. 

\section*{Acknowledgments}
This study was supported by the SalWe Research Program for Mind and Body (Tekes - the Finnish Funding Agency for Technology and Innovation grant 1104/10) and by the Academy of Finland (\textit{Finnish Centre of Excellence in Inverse Problems Research} 2012-2017, decision number 250215).
\bibliographystyle{siam}
\bibliography{bibliographyRefs_new}

\end{document}